\newcommand{\abssec}[1]{\noindent\normalsize {\bfseries #1\quad }\ignorespaces}
\renewenvironment{abstract}{\abssec{Abstract}}{\par\vspace{.1in}}
\newenvironment{keywords}{\abssec{Key Words}}{\par\vspace{.1in}}
\newenvironment{AMS}{\abssec{AMS subject
		classification}}{\par\vspace{.1in}}
\DeclareMathAlphabet{\mathpzc}{OT1}{pzc}{m}{it}
\numberwithin{equation}{section}
\theoremstyle{plain}
\newtheorem{theorem}{Theorem}[section]
\newtheorem{lemma}[theorem]{Lemma}
\newtheorem{proposition}[theorem]{Proposition}
\theoremstyle{definition}
\newtheorem{remark}[theorem]{Remark}
\newtheorem{definition}[theorem]{Definition}
\title{Fractional Operators with Inhomogeneous Boundary Conditions:\\ Analysis, Control, and Discretization 
\thanks{The work of the first and second author is partially supported by  NSF grant DMS-1521590. }}
\author{Harbir Antil\thanks{Department of Mathematical Sciences, George Mason University, Fairfax, VA 22030, USA. \texttt{hantil@gmu.edu}}
\and
Johannes Pfefferer\thanks{Chair of Optimal Control, Technical University of Munich, Boltzmannstra\ss e 3, 85748 Garching by Munich, Germany. \texttt{pfefferer@ma.tum.de}}
\and
Sergejs Rogovs\thanks{Institut f\"ur Mathematik und Bauinformatik, Universit\"at der Bundeswehr M\"unchen, 85577 Neubiberg, Germany. \texttt{sergejs.rogovs@unibw.de}}
}
\begin{document}

\maketitle

\begin{abstract}
In this paper we introduce new characterizations of the spectral fractional Laplacian to incorporate nonhomogeneous Dirichlet and Neumann boundary conditions. The classical cases with homogeneous boundary conditions arise as a special case. We apply our definition to fractional elliptic equations of order $s \in (0,1)$ with nonzero Dirichlet and Neumann boundary condition. Here the domain $\Omega$ is assumed to be a bounded, quasi-convex Lipschitz domain.
To impose the nonzero boundary conditions, we construct fractional harmonic extensions of the boundary data. It is shown that solving for the fractional harmonic extension is equivalent to solving for the standard harmonic extension in the very-weak form. The latter result is of independent interest as well. 
The remaining fractional elliptic problem (with homogeneous boundary data) can be realized using the existing techniques.
We introduce finite element discretizations and derive discretization error estimates in natural norms, which are confirmed by numerical experiments. 
We also apply our characterizations to Dirichlet and Neumann  boundary optimal control problems with fractional elliptic equation as constraints. 
\end{abstract}

\begin{keywords}
Modified spectral fractional Laplace operator, nonzero boundary conditions, very weak solution, finite element discretization, error estimates.
\end{keywords}

\begin{AMS}
35S15, 26A33, 65R20, 65N12, 65N30, 49K20
\end{AMS}

\section{Introduction}\label{s:intro}
Let $\Omega \subset \mathbb{R}^n$ with $n\ge 2$ be a bounded open set with boundary $\partial\Omega$. We will specify the regularity of the boundary in the sequel. The 
purpose of this paper is to study existence, 
uniqueness, regularity, and finite element approximation of the following 
nonhomogeneous Dirichlet boundary value problem
\begin{align}\label{eq:fracLap}
\begin{aligned}
 (-\Delta_D)^s u &= f \quad \mbox{in } \Omega , \\
  u &= g \quad \mbox{on } \partial\Omega . 
\end{aligned}
\end{align}
Here $g$ and $f$ are measurable functions on $\partial\Omega$ and $\Omega$ respectively, and satisfy certain conditions (that we shall specify later), $s \in (0,1)$ and $(-\Delta_D)^s$ denotes the modified spectral fractional Laplace operator with \emph{nonzero} boundary conditions. 

The nonlocality of $(-\Delta_D)^s$ makes \eqref{eq:fracLap} challenging. Nevertheless,  when $g \equiv 0$ the definition of the resulting nonlocal operator $(-\Delta_{D,0})^s$ incorporates the zero boundary conditions and has been well studied, see \cite{LCaffarelli_LSilvestre_2007a, PRStinga_JLTorrea_2010a, CBrandle_EColorado_APablo_USSanchez_2013a,XCabre_JTan_2010a, ACapella_JDavila_LDupaigne_YSire_2011a, MR3489634,RHNochetto_EOtarola_AJSalgado_2014a}, however the case $g\neq 0$ has been neglected by all these references.
Imposing nonzero boundary conditions in the nonlocal setting is highly nontrivial, 
which is the purpose of our paper. We will accomplish this by introducing a new characterization of $(-\Delta_D)^s$. 
We define our operator as
\begin{equation}\label{eq:intro_def}
 (-\Delta_D)^s u := \sum_{k=1}^\infty \left(\lambda_k^s \int_\Omega u \varphi_k 
     + \lambda_k^{s-1} \int_{\partial\Omega} u \partial_\nu \varphi_k \right) 
     \varphi_k,
\end{equation}
where $\lambda_k$ and $\varphi_k$ denote the eigenvalues and eigenfunctions of the Dirichlet Laplacian, see Section \ref{s:nzbc} for details. Obviously, if $u=0$ almost everywhere on the boundary then $(-\Delta_D)^s=(-\Delta_{D,0})^s$, see also Proposition~\ref{p:presvD}. Notice at this point that one cannot apply $(-\Delta_{D,0})^s$ to functions 
with nonzero boundary conditions as long as measuring traces is reasonable, see Section~\ref{s:cont_ex} for details. In contrast, in Section~\ref{s:cont_ex_1}, we will 
exemplarily illustrate that $(-\Delta_{D})^s 1$ is meaningful in that case. Moreover, if we set $s=1$ then \eqref{eq:intro_def} gives us 
\[
-\Delta u := \sum_{k=1}^\infty \left(\lambda_k \int_\Omega u \varphi_k 
+  \int_{\partial\Omega} u \partial_\nu \varphi_k \right) 
\varphi_k,
\]
i.e., the spectral characterization of the standard Laplacian (see Proposition~\ref{eq:Lap}
for details). In other words, our characterization of $(-\Delta_D)^s$ is a natural extension
of $-\Delta$. In addition, in Proposition~\ref{p:presvD}, we will 
see that the semigroup property $(-\Delta_D)^s (-\Delta_D)^{1-s} = -\Delta$ is valid.

To the best of our knowledge, there is only one work which is concerned with inhomogeneous boundary conditions in the context of the spectral fractional Laplacian. More precisely, in \cite{NALD} the authors study well-posedness of 
\begin{equation}\label{eq:fracLap_AB}
 \begin{aligned}
(-\Delta_{D,0})^s u &= f \quad \mbox{in } \Omega, \\
  u/\xi &= g \quad \mbox{on } \partial\Omega , 
  \end{aligned}
\end{equation}
where $\xi$ is a reference function with a prescribed singular behavior at the boundary. The (very) weak formulation of \eqref{eq:fracLap_AB} is given by
\[
	\int_{\Omega} u(-\Delta_{D,0})^sv=\int_{\Omega}fv-\int_{\partial\Omega}g\partial_{\nu}v\quad \forall v\in (-\Delta_{D,0})^{-s}C^\infty_0(\Omega),
\]
see \cite[Definition 3]{NALD}. This formulation even allows for data in measure spaces.
However, we emphasize that they do not impose a boundary condition of the type $u=g$ but 
instead consider $u/\xi = g$, where $\xi$ is a reference function with a prescribed singular behavior at the boundary. 

In contrast, in Section \ref{s:Ntrthm}, we will prove the integration-by-parts type formula
\begin{align*}
\int_{\Omega}(-\Delta_D)^suv&=
\int_\Omega u (-\Delta_{D,0})^sv+\int_{\partial\Omega}u \partial_\nu w_v,
\end{align*}	
where $w_v$ is defined as the solution to
\begin{align*}
(-\Delta_{D,0})^{1-s}w_v&=v\quad \text{in } \Omega,\\
 w_v&=0\quad\text{on }\partial \Omega.
\end{align*}
Based on this, we can show that the (very) weak formulation of \eqref{eq:fracLap} is given by
\[
\int_{\Omega} u(-\Delta_{D,0})^sv=\int_{\Omega}fv-\int_{\partial\Omega}g\partial_{\nu}w_v\quad \forall v\in \mathbb{H}^{2s}(\Omega).
\]
Thus, the condition $u=g$ within our formulation can be interpreted as a Dirichlet boundary condition.

From a practical point of view but also for the purpose of analyzing problem \eqref{eq:fracLap}, at first, we use a standard lifting argument by constructing a \emph{fractional harmonic map}
\begin{equation}\label{eq:vintro}
  (-\Delta_D)^s v = 0 \quad \mbox{in } \Omega , \quad v = g \quad \mbox{on } \partial\Omega . 
\end{equation}
It may seem at first glance that solving \eqref{eq:vintro} is as complicated as solving the original problem \eqref{eq:fracLap}. However, we show that  
solving \eqref{eq:vintro} is equivalent to solving 
\begin{equation}\label{eq:vintro_vw}
 \int_\Omega v (-\Delta \varphi) = -\int_{\partial\Omega} g\partial_\nu\varphi \quad \forall \varphi \in \operatorname{dom}(-\Delta), 
\end{equation}
i.e., the standard Laplace equation in the \emph{very-weak form}. 
To get $u$, it remains to find $w$ solving 
\begin{equation}\label{eq:wintro}
 (-\Delta_{D,0})^s w = f \quad \mbox{in } \Omega , \quad w = 0 \quad \mbox{on } \partial\Omega ,
\end{equation}
then $u = w+v$. Thus instead of looking for $u$  directly, we are reduced to solving  \eqref{eq:vintro_vw} and \eqref{eq:wintro} for $v$ and $w$, respectively.

Both \eqref{eq:vintro_vw} and \eqref{eq:wintro} have received a great deal of 
attention, we only refer to \cite{LionsMagenes1968, 
	MR2084239,CasasRaymond2006,MR3070527,MR3432846,NUM:NUM22057,apelnicaisepfefferer:2015-2extended}
	 for the first case and \cite{LCaffarelli_LSilvestre_2007a, 
	PRStinga_JLTorrea_2010a, 
	CBrandle_EColorado_APablo_USSanchez_2013a,XCabre_JTan_2010a, 
	ACapella_JDavila_LDupaigne_YSire_2011a, 
	MR3489634,RHNochetto_EOtarola_AJSalgado_2014a,
	ABonito_JEPasciak_2015a,  
	HAntil_JPfefferer_MWarma_2016a, meidner2017hp} for the latter. We will show 
	that both \eqref{eq:vintro_vw} and \eqref{eq:wintro} are well-posed 
	(solution exists and is unique) thus \eqref{eq:fracLap} is well-posed as 
	well. 

For the numerical computation of solutions of \eqref{eq:vintro_vw}, we rely 
on well established techniques, see for instance
\cite{MR2084239,NUM:NUM22057,apelnicaisepfefferer:2015-2extended}.
It is even possible to apply a standard finite element method especially if the boundary 
datum $g$ is regular enough. However, the numerical realization of the nonlocal operator $(-\Delta_{D,0})^s$ in \eqref{eq:wintro} is more challenging. Several approaches have been advocated, for instance, computing the eigenvalues and eigenvectors of $-\Delta_{D,0}$ (cf.~\cite{MR3679919}), Dunford-Taylor integral representation \cite{ABonito_JEPasciak_2015a}, or numerical schemes based on the Caffarelli-Silvestre (or the Stinga-Torrea) extension, just to name a few. In our work, we choose the latter even though the proposed ideas directly apply to other approaches where $(-\Delta_{D,0})^s$ appears, for instance \cite{ABonito_JEPasciak_2015a}. Notice that the aforementioned extension of Caffarelli-Silvestre (or the Stinga-Torrea) is only applicable to $(-\Delta_{D,0})^s$ and not directly to the operator $(-\Delta_D)^s$ in \eqref{eq:fracLap}.

The extension approach was introduced in 
\cite{LCaffarelli_LSilvestre_2007a} for $\mathbb{R}^n$, see its extensions to 
bounded domains \cite{ACapella_JDavila_LDupaigne_YSire_2011a, 
	PRStinga_JLTorrea_2010a}. It states that $(-\Delta_{D,0})^s$ can be realized as 
an operator that maps a Dirichlet boundary condition to a Neumann condition via 
an extension problem on the semi-infinite cylinder $\mathcal{C} = \Omega \times 
(0,\infty)$, i.e., a Dirichlet-to-Neumann operator. A first finite element method to solve \eqref{eq:wintro} based on the extension approach is given in \cite{RHNochetto_EOtarola_AJSalgado_2014a}. This was applied to semilinear problems in \cite{HAntil_JPfefferer_MWarma_2016a}.
In the context of fractional distributed optimal control problems, the extension approach was considered in \cite{HAntil_EOtarola_2014a} where related discretization error estimates are established as well.

An additional advantage is that our characterization allows for imposing other types of nonhomogeneous boundary conditions such as Neumann boundary conditions (see sections \ref{s:nn} and \ref{s:app_n}) and that it immediately extends to general second order fractional operators (see Section~\ref{s:fext}).

We remark that the difficulties in imposing the nonhomogeneous boundary conditions are not limited to the spectral fractional Laplacian. In fact, the integral definition of fractional Laplacian \cite{GA:JP15} requires imposing boundary conditions on $\mathbb{R}^n \setminus \Omega$. On the other hand, 
the so-called regional definition of fractional Laplacian with nonhomogeneous boundary conditions may lead to an ill-posed problem when $s\le 1/2$, see \cite{MR2167307, MR3306694}.

This paper is organized as follows: We state the definitions of $(-\Delta_{D,0})^s$ and $(-\Delta_{N,0})^s$ in Sections~\ref{s:zbc} and \ref{s:zbc_n}. Moreover, we introduce the relevant function spaces. Some of the material in these sections is well-known. However, we recall it such that the paper is self-contained. Our main work begins from Section~\ref{s:nzbc} where we first state the new characterization of Dirichlet fractional Laplacian. Next we discuss the Neumann case in Section~\ref{s:nn}. In Section~\ref{s:Ntrthm} we state two not so well known trace theorems for $H^2(\Omega)$ functions in bounded Lipschitz domains and prove integration-by-parts type formulas for the spectral fractional Laplacians. Subsequently, in Section~\ref{s:app}, we analyze the boundary value problem \eqref{eq:fracLap} and derive a priori finite element error estimates. In Section~\ref{s:app_n}, we study corresponding results for the nonhomogeneous Neumann problem. Afterwards, in Section~\ref{s:nbc}, we consider Dirichlet and Neumann boundary optimal control problems with fractional elliptic PDEs as constraints. We verify our theoretical rates of convergence via two numerical examples in Section~\ref{s:numerics}. We provide further extensions to general second order elliptic operators in Section~\ref{s:fext}.

\section{Spectral Fractional Laplacian}

In this section, without any specific mention, we will assume that the boundary $\partial\Omega$ is Lipschitz continuous. 

\subsection{Zero Dirichlet Boundary Data}\label{s:zbc}

Let $-\Delta_{D,0}$ be the realization in $L^2(\Omega)$ of the Laplace operator 
with zero  Dirichlet boundary condition. It is well-known that 
$-\Delta_{D,0}$ has a compact resolvent and its eigenvalues form a 
non-decreasing sequence $0<\lambda_1\le\lambda_2\le\cdots\le\lambda_k\le\cdots$ 
satisfying $\lim_{k\to\infty}\lambda_k=\infty$. We denote by $\varphi_k \in 
H^1_0(\Omega)$ the orthonormal eigenfunctions associated with $\lambda_k$. It is well known that these eigenfunctions form an orthonormal basis of $L^2(\Omega)$.

For $0<s<1$,  we define the fractional order Sobolev space
\begin{align*}
H^s(\Omega):=\left\{u\in L^2(\Omega):\; 
\int_{\Omega}\int_{\Omega}\frac{|u(x)-u(y)|^2}{|x-y|^{n+2s}}\;dxdy<\infty\right\},
\end{align*}
and we endow it with the norm defined by
\begin{align}\label{eq:HsN}
\|u\|_{H^s(\Omega)}=\left(\|u\|_{L^2(\Omega)}^2+|u|_{H^s(\Omega)}^2\right)^{\frac
 12},
\end{align}
where the semi-norm $|u|_{H^s(\Omega)}$ is defined by
\begin{equation}\label{norm-sob-es}
|u|_{H^s(\Omega)}^2=\int_{\Omega}\int_{\Omega}\frac{|u(x)-u(y)|^2}{|x-y|^{n+2s}}\;dxdy.
\end{equation}
The fractional order Sobolev spaces $H^{t}(\partial \Omega)$ on the boundary 
with $0<t<1$ are defined in 
the same manner.
We also let 
\begin{align*}
H_0^s(\Omega):=\overline{\mathcal D(\Omega)}^{H^s(\Omega)},
\end{align*}
where $\mathcal D(\Omega)$ denotes the space of test functions on $\Omega$, that is, the space of infinitely continuously differentiable functions with compact support in $\Omega$, and
\begin{align*}
H_{00}^{\frac 12}(\Omega):=\left\{u\in 
H^{\frac 12}(\Omega):\;\int_{\Omega}\frac{u^2(x)}{\mbox{dist}(x,\partial\Omega)}\;dx<\infty\right\} ,
\end{align*}
with norm
\[
\|u\|_{H_{00}^{\frac 
12}(\Omega)}=\left(\|u\|_{H^{\frac12}(\Omega)}^2+\int_{\Omega}\frac{u^2(x)}{\mbox{dist}(x,\partial\Omega)}\;dx\right)^{\frac
	12}.
\]
We further introduce the dual spaces of 
$H^s_0(\Omega)$ and 
$H^t(\partial \Omega)$, and denote them by $H^{-s}(\Omega)$ and 
$H^{-t}(\partial \Omega)$, respectively.

For any $s\geq0$, we also define the following fractional order Sobolev space
\begin{align*}
\mathbb H^s(\Omega):=\left\{u=\sum_{k=1}^\infty u_k\varphi_k\in L^2(\Omega):\;\;\|u\|_{\mathbb H^s(\Omega)}^2:=\sum_{k=1}^\infty \lambda_k^su_k^2<\infty\right\},
\end{align*}
where we recall that $\lambda_k$ are the eigenvalues of $-\Delta_{D,0}$ with associated normalized eigenfunctions $\varphi_k$ and
\begin{align*} 
u_k=(u,\varphi_k)_{L^2(\Omega)}=\int_{\Omega}u\varphi_k.
\end{align*}

It is well-known that
\begin{equation}\label{inf}
\mathbb H^s(\Omega)=
\begin{cases}
H^s(\Omega)=H_0^s(\Omega)\;\;\;&\mbox{ if }\; 0<s<\frac 12,\\
H_{00}^{\frac 12}(\Omega)\;\;&\mbox{ if }\; s=\frac 12,\\
H_0^s(\Omega)\;\;&\mbox{ if }\; \frac 12<s<1.
\end{cases}
\end{equation}
The dual space of $\mathbb{H}^s(\Omega)$ will be denoted by 
$\mathbb{H}^{-s}(\Omega)$.

The fractional order Sobolev spaces can be also defined by using interpolation theory. That is, for every $0<s<1$,
\begin{align}\label{eq:inf_interp_Hs}
H^s(\Omega)=[H^1(\Omega),L^2(\Omega)]_{1-s},
\end{align}
and
\begin{align}\label{inf_interp}
H_0^s(\Omega)=[H_0^1(\Omega),L^2(\Omega)]_{1-s}\;\mbox{ if }\; s\in 
(0,1)\setminus\{1/2\}\;\mbox{ and }\; H_{00}^{\frac 
12}=[H_0^1(\Omega),L^2(\Omega)]_{\frac 12}.
\end{align}

\begin{definition}\label{d:zbc}
The spectral fractional Laplacian is defined on the space $C^\infty_0(\Omega)$ by
\begin{align*}
(-\Delta_{D,0})^su:=\sum_{k=1}^\infty\lambda_k^su_k\varphi_k\quad 
\text{with}\quad u_k=\int_{\Omega}u\varphi_k.
\end{align*}
\end{definition}
By observing that
\[
	\int_\Omega (-\Delta_{D,0})^su v = \sum_{k=1}^\infty 
	\lambda_k^s u_k v_k =\sum_{k=1}^\infty 
	\lambda_k^{s/2} u_k \lambda_k^{s/2} v_k\leq 
	\|u\|_{\mathbb{H}^s(\Omega)}\|v\|_{\mathbb{H}^s(\Omega)}
\]
for any $v=\sum_{k=1}^\infty v_k \varphi_k\in \mathbb{H}^s(\Omega)$,
the operator $(-\Delta_{D,0})^s$ extends to an operator mapping from $\mathbb 
H^s(\Omega)$ to $\mathbb H^{-s}(\Omega)$ by density. Moreover, we notice that 
in this case we have
\begin{equation}
 \| u \|_{\mathbb{H}^s(\Omega)} = \| (-\Delta_{D,0})^{\frac{s}{2}} u \|_{L^2(\Omega)}. 
\end{equation}

In addition, the following estimate holds by definition of the spaces 
$H^s(\Omega)$, $H_0^s(\Omega)$, $H_{00}^{\frac12}(\Omega)$, and 
$\mathbb{H}^s(\Omega)$, and by relation \eqref{inf}.

\begin{proposition}\label{l:HsBHs}
If $u \in \mathbb{H}^s(\Omega)$ with $0<s<1$ then there exists a constant $C = C(\Omega,s) > 0$ such that
\[
 \|u\|_{H^s(\Omega)} \le C \|u\|_{\mathbb{H}^s(\Omega)} .  
\]
\end{proposition}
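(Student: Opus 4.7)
The plan is to split the argument into three cases corresponding to $s<1/2$, $s=1/2$, and $s>1/2$, and in each case reduce the claim to a trivial continuous embedding of a classical Sobolev space into $H^s(\Omega)$ after invoking the identification \eqref{inf}. Throughout, I am interpreting \eqref{inf} as equality with equivalent norms, which is the standard outcome of the spectral/interpolation characterization of $\mathbb{H}^s(\Omega)$.

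For $s\in(0,1/2)$, relation \eqref{inf} yields $\mathbb{H}^s(\Omega) = H_0^s(\Omega) = H^s(\Omega)$ as sets. Since all three are Banach spaces under the respective norms, the identity map $\mathbb{H}^s(\Omega)\to H^s(\Omega)$ is a continuous bijection by the closed-graph / open-mapping theorem, which delivers the desired constant $C$. For $s\in(1/2,1)$, \eqref{inf} gives $\mathbb{H}^s(\Omega) = H_0^s(\Omega)$ with equivalent norms, and the definition of $H_0^s(\Omega)$ as a closed subspace of $H^s(\Omega)$ yields the trivial inclusion $\|u\|_{H^s(\Omega)} \le \|u\|_{H_0^s(\Omega)} \le C\|u\|_{\mathbb{H}^s(\Omega)}$. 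For $s=1/2$, \eqref{inf} identifies $\mathbb{H}^{1/2}(\Omega)$ with $H_{00}^{1/2}(\Omega)$; by the very definition of the norm on $H_{00}^{1/2}(\Omega)$, we have $\|u\|_{H^{1/2}(\Omega)}\le \|u\|_{H_{00}^{1/2}(\Omega)}$, and the equivalence with the $\mathbb{H}^{1/2}$-norm again finishes the case.

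The only non-formal ingredient is the norm equivalence hidden in \eqref{inf}. This is a standard consequence of the fact that the spectral space $\mathbb{H}^s(\Omega)$ is exactly the interpolation space $[H_0^1(\Omega),L^2(\Omega)]_{1-s}$ (via the K-method, using the spectral resolution of $-\Delta_{D,0}$), combined with \eqref{inf_interp} and \eqref{eq:inf_interp_Hs}; one can either cite this identification or, alternatively, obtain the needed inequality directly from the definition of $\|\cdot\|_{\mathbb{H}^s(\Omega)}$ by writing $u=\sum_k u_k\varphi_k$, bounding the Gagliardo semi-norm of each $\varphi_k$ in terms of $\lambda_k^{s/2}$, and summing using Cauchy--Schwarz.

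The main (mild) obstacle is that although the inclusions $H_0^s(\Omega)\hookrightarrow H^s(\Omega)$ and $H_{00}^{1/2}(\Omega)\hookrightarrow H^{1/2}(\Omega)$ are transparent, the equivalence between $\|\cdot\|_{\mathbb{H}^s(\Omega)}$ and the norm on the classical space is not immediate from the spectral definition and must ultimately be justified through interpolation theory. Since this justification is entirely standard and referenced via \eqref{inf}--\eqref{inf_interp}, the proof itself is short and consists only of invoking the appropriate case of \eqref{inf} together with the trivial embedding into $H^s(\Omega)$.
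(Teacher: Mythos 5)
Your proof is correct and follows essentially the same route as the paper, which simply asserts the estimate ``by definition of the spaces $H^s(\Omega)$, $H_0^s(\Omega)$, $H_{00}^{1/2}(\Omega)$, and $\mathbb{H}^s(\Omega)$, and by relation \eqref{inf}.'' Your case-by-case elaboration, including the remark that the norm equivalence implicit in \eqref{inf} rests on the interpolation identities \eqref{inf_interp} (or a closed-graph argument), just makes explicit what the paper leaves to the reader.
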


\subsubsection{A Counter Example}\label{s:cont_ex}
The purpose of this example is to illustrate that in general 
Definition~\ref{d:zbc_n} cannot be applied to functions with nonzero 
boundary conditions as long as measuring traces is reasonable. Towards this end, we first observe that the operator $(-\Delta_{D,0})^s$ can be extended to an operator mapping from $\mathbb{H}^t(\Omega)$ to $\mathbb{H}^{t-2s}(\Omega)$, see the foregoing explanations for the special case $t=s$. Next, we apply this operator to the function $u\equiv 1$ as follows: We set $\Omega=(0,1)$. Thus, there holds $\varphi_k  = \sin(k\pi x)$ and $\lambda_k = k^2\pi^2$. Basic calculations yield
\[
 u_k = \left\{\begin{array}{ll}
                0 & \mbox{if } k\mbox{ even}, \\
                \frac{2}{k\pi} & \mbox{if } k\mbox{ odd} .
              \end{array}
       \right.
\]
Moreover, we get for $t\geq 1/2$
\begin{align*}
	\|(-\Delta_{D,0})^s1\|_{\mathbb{H}^{t-2s}(\Omega)}^2&=\sum_{k=1}^\infty\lambda_k^{t-2s}\left(\int_\Omega (-\Delta_{D,0})^s1 \varphi_k\right)^2=4\pi^{2t-2}\sum_{k=1}^\infty(2k-1)^{2t-2}\\
	&=4\pi^{2t-2}\sum_{k=1}^\infty(2k-1)^{2t-1}(2k-1)^{-1}\geq 4\pi^{2t-2}\sum_{k=1}^\infty(2k-1)^{-1}\\
	&\geq 2\pi^{2t-2}\sum_{k=1}^\infty k^{-1}.
\end{align*}
We observe that the series on the right hand side of the above inequality is not convergent. Consequently, whenever it is possible to measure the Dirichlet trace in the classical sense, the fractional operator from Definition \ref{d:zbc_n} cannot be applied. In \cite[Introduction]{NALD} it is shown that in case of $t=0$ the application of the fractional Laplacian to the function $1$ yields the killing measure.
In Section~\ref{s:cont_ex_1} we will show that with our definition of 
fractional Laplacian introduced in Section~\ref{s:nzbc}, the issues discussed
above can be fixed.

\subsection{Zero Neumann Boundary Data}\label{s:zbc_n}

Let $-\Delta_{N,0}$ be the realization of the Laplace operator with zero 
Neumann boundary condition. 
It is well-known that there exists a sequence of nonnegative eigenvalues 
$\{\mu_k\}_{k\ge 1}$ satisfying $0=\mu_1 < \mu_2 \le \cdots \le\mu_k \le \cdots $ 
with $\lim_{k\rightarrow \infty} \mu_k = \infty$ and corresponding eigenfunctions
$\{\psi_k\}_{k \ge 1}$ in $H^1(\Omega)$. 
We have that $\mu_1 = 0$, $\psi_1 = 1/\sqrt{|\Omega|}$, 
$\int_\Omega \psi_k = 0$ for all $k \ge 2$. Moreover, the eigenfunctions 
$\{\psi_k\}_{k\ge 1}$ form an orthonormal basis of $L^2(\Omega)$. 

For any $s\geq 0$ we define the fractional order Sobolev spaces $H^s_{\int}(\Omega)$ \cite{MR3489634}:
\begin{align*}
H^s_{\int}(\Omega):=\left\{u=\sum_{k=2}^\infty u_k\psi_k\in L^2(\Omega):\; 
\|u\|_{H^s_{\int}(\Omega)}^2:=\sum_{k=2}^\infty \mu_k^s u_k^2<\infty\right\}.
\end{align*} 
Notice that any function $u$ belonging to $H^s_{\int}(\Omega)$ fulfills 
$\int_\Omega u 
=0$.

Furthermore, we denote by $H^{-s}_{\int}(\Omega)$ the dual space of 
$H^s_{\int}(\Omega)$.

\begin{definition}\label{d:zbc_n}
For any function $u\in C^\infty(\bar\Omega)$ with $\partial_\nu u =0$ we define the spectral fractional Laplacian by
\begin{align*}
(-\Delta_{N,0})^su:=\sum_{k=2}^\infty\mu_k^su_k\psi_k \quad \text{with} \quad  u_k=\int_{\Omega}u\psi_k.
\end{align*}
\end{definition}
As in the previous section for the Dirichlet Laplacian, the operator 
$(-\Delta_{N,0})^s$ can be extended to an 
operator mapping from $H^s_{\int}(\Omega)$ to $H^{-s}_{\int}(\Omega)$. Notice 
as 
well that $\int_\Omega (-\Delta_{N,0})^su = 0$ and 
$
 \| (-\Delta_{N,0})^{\frac{s}{2}} u \|_{L^2(\Omega)}^2 = \sum_{k=2}^\infty 
 \mu^s_k u_k^2 .
$ 
Thus we have 
\begin{equation}\label{eq:Hint2}
 \|u\|_{H^s_{\int}(\Omega)} =  \| (-\Delta_{N,0})^{\frac{s}{2}} u \|_{L^2(\Omega)}.
\end{equation}
We next recall \cite[Lemma~7.1]{MR3489634}.

\begin{proposition}\label{prop:normequi}
For any $0 < s < 1$, $u \in H^s_{\int}(\Omega)$ if and only if $u \in 
H^s(\Omega)$ and $\int_\Omega u =0$. In addition, the norm in \eqref{eq:Hint2} 
is equivalent to the seminorm $|\cdot|_{H^s(\Omega)}$ defined in 
\eqref{norm-sob-es}. 
\end{proposition}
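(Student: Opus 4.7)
The plan is to prove the set identity $H^s_{\int}(\Omega) = \{u \in H^s(\Omega) : \int_\Omega u = 0\}$ and the norm equivalence by first verifying both at the integer endpoint $s=1$, and then interpolating down using the orthogonal projection onto zero-mean functions as a retract.

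For $s=1$ I would proceed directly. By construction $H^1_{\int}(\Omega) = \operatorname{dom}((-\Delta_{N,0})^{1/2})$, and expanding $u=\sum_{k\ge 2}u_k\psi_k$ in the Neumann eigenbasis and integrating by parts against $\psi_k$ gives
\begin{align*}
\|(-\Delta_{N,0})^{1/2}u\|_{L^2(\Omega)}^2 \;=\; \sum_{k\ge 2}\mu_k u_k^2 \;=\; \int_\Omega |\nabla u|^2\,dx \;=\; |u|_{H^1(\Omega)}^2.
\end{align*}
Hence $H^1_{\int}(\Omega) = \{u \in H^1(\Omega):\int_\Omega u = 0\}$ and the two norms coincide; by the Poincaré--Wirtinger inequality the seminorm is also equivalent to the full $\|\cdot\|_{H^1(\Omega)}$ on the zero-mean subspace.

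For $0<s<1$ I would use interpolation. Set $L^2_0(\Omega):=\{u\in L^2(\Omega):\int_\Omega u=0\}$. Since $-\Delta_{N,0}$ is a nonnegative self-adjoint operator on $L^2_0(\Omega)$, spectral calculus yields
\begin{align*}
H^s_{\int}(\Omega) \;=\; \operatorname{dom}((-\Delta_{N,0})^{s/2}) \;=\; [H^1_{\int}(\Omega),\,L^2_0(\Omega)]_{1-s}
\end{align*}
with equivalent norms. On the Sobolev side, the projection $Pu:=u-\frac{1}{|\Omega|}\int_\Omega u$ is a bounded retraction from both $L^2(\Omega)$ and $H^1(\Omega)$ onto their zero-mean subspaces, so the retract lemma for real interpolation together with \eqref{eq:inf_interp_Hs} gives
\begin{align*}
[H^1(\Omega)\cap L^2_0(\Omega),\,L^2_0(\Omega)]_{1-s} \;=\; [H^1(\Omega),\,L^2(\Omega)]_{1-s}\cap L^2_0(\Omega) \;=\; H^s(\Omega)\cap L^2_0(\Omega),
\end{align*}
again with equivalent norms. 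Combining these two interpolation identities via the $s=1$ step yields $H^s_{\int}(\Omega)=H^s(\Omega)\cap L^2_0(\Omega)$ with $\|\cdot\|_{H^s_{\int}(\Omega)}$ equivalent to $\|\cdot\|_{H^s(\Omega)}$. A final Poincaré-type argument using the compact embedding $H^s(\Omega)\hookrightarrow L^2(\Omega)$ on the bounded Lipschitz domain bounds $\|u\|_{L^2(\Omega)}$ by $|u|_{H^s(\Omega)}$ on the zero-mean subspace, replacing the full norm by the seminorm.

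The main technical hurdle is that the retract step must be valid uniformly for all $s\in(0,1)$, including $s=1/2$; this is precisely where \eqref{eq:inf_interp_Hs} is crucial, because no boundary vanishing condition is imposed and the correct Sobolev target is plain $H^{1/2}(\Omega)$ rather than $H^{1/2}_{00}(\Omega)$. A secondary point is that the final Poincaré step for $s<1/2$ relies on the compactness afforded by $\Omega$ being bounded and Lipschitz.
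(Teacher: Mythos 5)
Your argument is correct, but note that the paper does not actually prove this proposition: it simply recalls it as Lemma~7.1 of the cited reference \cite{MR3489634}, so there is no internal proof to match against. Your route --- identify the endpoint $s=1$ by expanding in the Neumann eigenbasis, pass to $0<s<1$ via the spectral identity $\operatorname{dom}((-\Delta_{N,0})^{s/2})=[H^1_{\int}(\Omega),L^2_0(\Omega)]_{1-s}$, use the mean-value projection $P$ as a retract to identify the Sobolev side, and finish with a fractional Poincar\'e inequality by compactness --- is the standard self-contained proof and is sound. Two small points deserve a sentence each if this were written out in full: at $s=1$ you should justify that $\{\psi_k/\sqrt{\mu_k}\}_{k\ge 2}$ is \emph{complete} (not merely orthonormal) for the gradient inner product on the zero-mean subspace, which follows since $\int_\Omega\nabla u\cdot\nabla\psi_k=\mu_k u_k$ forces $u=0$ when all these vanish; and in the retract step one should state the coretraction explicitly (the inclusion $L^2_0\hookrightarrow L^2$, with $P\circ\iota=\mathrm{id}$) so that the interpolation-of-complemented-subspaces lemma applies verbatim. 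Your observation about $s=1/2$ is well taken and consistent with the paper's convention \eqref{eq:inf_interp_Hs}: since no trace condition is imposed, the interpolation target is $H^{1/2}(\Omega)$ and not $H^{1/2}_{00}(\Omega)$, so no exceptional case arises. The compactness of $H^s(\Omega)\hookrightarrow L^2(\Omega)$ used in the final Poincar\'e step indeed holds on bounded Lipschitz domains, which is the standing assumption of that section.
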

\begin{remark}
	According to Proposition \ref{prop:normequi}, we have that
	\[
		\|u\|_{H^s(\Omega)}^2\sim u_1^2+\sum_{k=2}^\infty (1+\mu_k^s) u_k^2.
	\]
	Due to this, the Neumann Laplacian from Definition \ref{d:zbc_n} is also extendable to an operator mapping from $H^s(\Omega)$ to $H^s(\Omega)^*$. However, since we are going to treat associated boundary value problems, we consider the Neumann Laplacian with the mapping properties from above to ensure uniqueness of the solution.
\end{remark}

\subsection{Nonzero Dirichlet Boundary Data}\label{s:nzbc}

To motivate our definition of fractional Laplacian with nonzero Dirichlet boundary datum, we first derive  such a characterization for the standard Laplacian. 
\begin{proposition}
For any $u \in C^\infty(\bar\Omega)$ we have that
\begin{equation}\label{eq:Lap}
 -\Delta_D u := \sum_{k=1}^\infty \left(\lambda_k \int_\Omega u \varphi_k 
     + \int_{\partial\Omega} u \partial_\nu \varphi_k \right) \varphi_k
\end{equation}
fulfills
$-\Delta_D u = -\Delta u$ a.e. in $\Omega$.
\end{proposition}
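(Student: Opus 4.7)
The plan is to expand $-\Delta u$ in the orthonormal eigenbasis $\{\varphi_k\}$ of $L^2(\Omega)$ and to identify the resulting Fourier coefficients with the bracketed expression in \eqref{eq:Lap}. Since $u \in C^\infty(\bar\Omega)$ implies $-\Delta u \in L^2(\Omega)$, the expansion
\[
-\Delta u = \sum_{k=1}^\infty c_k\, \varphi_k, \qquad c_k := \int_\Omega (-\Delta u)\,\varphi_k,
\]
converges in $L^2(\Omega)$, which yields equality almost everywhere once the coefficients are computed correctly.

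The key step is to evaluate $c_k$ via Green's second identity. Since $\varphi_k \in H^1_0(\Omega)$ solves $-\Delta \varphi_k = \lambda_k \varphi_k$ and $u$ is smooth up to the boundary, I would write
\[
c_k \;=\; \int_\Omega u\,(-\Delta \varphi_k) \;+\; \int_{\partial\Omega}\bigl(u\,\partial_\nu \varphi_k - \partial_\nu u \cdot \varphi_k\bigr) \;=\; \lambda_k \int_\Omega u\,\varphi_k \;+\; \int_{\partial\Omega} u\,\partial_\nu \varphi_k,
\]
where the term involving $\partial_\nu u$ disappears because $\varphi_k|_{\partial\Omega}=0$. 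Substituting back into the eigenfunction expansion of $-\Delta u$ reproduces exactly the right-hand side of \eqref{eq:Lap}, yielding the claimed a.e.\ identity.

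The only delicate point is to give a rigorous meaning to the boundary integral $\int_{\partial\Omega} u\,\partial_\nu \varphi_k$ on a general Lipschitz domain, since $\varphi_k$ is a priori only in $H^1_0(\Omega)$. Two routes are available: first, for the quasi-convex Lipschitz domains ultimately considered in the paper, elliptic regularity gives $\varphi_k \in H^2(\Omega)$, so that $\partial_\nu \varphi_k \in H^{1/2}(\partial\Omega)$ and the boundary integral is a bona fide Lebesgue integral. Second, in the general Lipschitz setting one interprets the expression as the duality pairing $\langle \partial_\nu \varphi_k, u\rangle_{H^{-1/2}(\partial\Omega),H^{1/2}(\partial\Omega)}$, where $\partial_\nu \varphi_k$ is defined by the normal trace theorem applied to $\nabla \varphi_k \in H(\mathrm{div};\Omega)$ (using $\operatorname{div}\nabla \varphi_k = -\lambda_k \varphi_k \in L^2(\Omega)$). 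In either interpretation, the Green identity above is standard for $u \in C^\infty(\bar\Omega)$, which is the main obstacle that has to be addressed before the short computation above goes through.
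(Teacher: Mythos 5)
Your proposal is correct and follows essentially the same route as the paper: the paper likewise computes the Fourier coefficient $\int_\Omega(-\Delta u)\varphi_k$ by integrating by parts twice (i.e., Green's second identity), uses $\varphi_k|_{\partial\Omega}=0$ to kill the $\partial_\nu u$ term, and concludes from the orthonormal basis property. Your additional remarks on giving rigorous meaning to $\int_{\partial\Omega}u\,\partial_\nu\varphi_k$ on Lipschitz domains are a welcome refinement but do not change the argument.
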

\begin{proof} 
Standard integration-by-parts formula yields
\begin{align}
\int_{\Omega} -\Delta u \varphi_k &= \int_{\Omega} \nabla u \cdot \nabla 
\varphi_k
- \int_{\partial \Omega} \partial_\nu u \underbrace{\varphi_k}_{=0} \notag\\
&= \int_{\Omega} -\Delta \varphi_k u + \int_{\partial \Omega} u 
\partial_\nu \varphi_k\notag\\
&= \lambda_k\int_{\Omega} \varphi_k u + \int_{\partial \Omega} u \partial_\nu 
\varphi_k,\label{eq:dericlassical}
\end{align}
where in the last equality we have used the fact that $\varphi_k$ are the eigenfunctions of the Laplacian with eigenvalues $\lambda_k$. This yields the desired characterization having in mind that the eigenfunctions form an orthonormal basis of $L^2(\Omega)$.
\end{proof}
By density results, the operator $-\Delta_D$ extends to an operator 
mapping from 
$H^1(\Omega)$ to $H^{-1}(\Omega)$ in the classical way. 

We are now ready to state our definition of the fractional Laplacian 
$(-\Delta_D)^s$. 

\begin{definition}[nonzero Dirichlet]\label{d:nzbc}
We define the spectral fractional Laplacian on $C^\infty(\bar\Omega)$ by 
\begin{equation}\label{eq:fLap_g}
(-\Delta_D)^s u := \sum_{k=1}^\infty \left(\lambda_k^s \int_\Omega u \varphi_k 
     + \lambda_k^{s-1} \int_{\partial\Omega} u \partial_\nu \varphi_k \right) 
     \varphi_k.
\end{equation} 
\end{definition}
Let us set
\[
	u_{\Omega,k}=\int_\Omega u \varphi_k\quad\text{and}\quad 
	u_{\partial\Omega,k}=\int_{\partial\Omega} u \partial_\nu \varphi_k.
\]
We observe that for any $v=\sum_{k=1}^{\infty}v_k\varphi_k\in 
\mathbb{H}^{s}(\Omega)$ there holds
\begin{align*}
\int_\Omega (-\Delta_D)^s u v 
&=\sum_{k=1}^\infty 
\left(\lambda_k^s 
u_{\Omega,k} 
+ \lambda_k^{s-1} u_{\partial\Omega,k} 
\right)v_k=\sum_{k=1}^\infty\lambda_k^{\frac{s}2} 
\left(u_{\Omega,k} + \lambda_k^{-1} u_{\partial\Omega,k} 
\right)\lambda_k^{\frac{s}2}v_k\\
&\leq \left(\sum_{k=1}^\infty 
\lambda_k^{s}\left(u_{\Omega,k} + \lambda_k^{-1} u_{\partial\Omega,k} 
\right)^2\right)^{1/2}\|v\|_{\mathbb{H}^{s}(\Omega)},
\end{align*}
where we used the orthogonality of the eigenfunctions $\varphi_k$. Thus the 
operator 
$(-\Delta_D)^s$ can be extended to an operator mapping from
\[
\mathbb{D}^{s}(\Omega):=\Big\{u\in L^2(\Omega):\; 
\sum_{k=1}^\infty 
\lambda_k^{s}\left(u_{\Omega,k} + \lambda_k^{-1} u_{\partial\Omega,k} 
\right)^2<\infty\Big\}
\]
to $\mathbb{H}^{-s}(\Omega)$, cf. Section \ref{s:app} where we solve 
associated boundary value problems.

We notice that Definition~\ref{d:nzbc} obeys the 
following fundamental properties. 

\begin{proposition}\label{p:presvD}
Let $(-\Delta_D)^s$ be as in Definition \ref{d:nzbc} then the following holds:
\begin{enumerate}
\item When $s = 1$ we obtain the standard Laplacian \eqref{eq:Lap}.
\item For any $u \in C^\infty_0(\Omega)$ 
there holds \[(-\Delta_D)^s u = 
(-\Delta_{D,0})^s u\] a.e. in $\Omega$, i.e., we recover the Definition~\ref{d:zbc}.
\item For any $s\in(0,1)$ and any $u\in C^\infty(\bar\Omega)$ 
there is the identity \[(-\Delta_D)^s 
(-\Delta_D)^{1-s} u = - \Delta u\] a.e. in $\Omega$.
\end{enumerate}
\end{proposition}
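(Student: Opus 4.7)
The proposition bundles three claims of increasing depth: (a) and (b) are immediate inspections of Definition~\ref{d:nzbc}, while (c)---the semigroup identity $(-\Delta_D)^s(-\Delta_D)^{1-s} u = -\Delta u$---is the real content. My plan is to dispatch (a) and (b) directly from the definition and then to reduce (c) to (b) by observing that the intermediate function $w := (-\Delta_D)^{1-s} u$ has vanishing Dirichlet trace, so that the correction term in \eqref{eq:fLap_g} disappears when $(-\Delta_D)^s$ is applied to it.

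For (a) I would just substitute $s=1$ in \eqref{eq:fLap_g}: the factor $\lambda_k^{s-1}$ collapses to $1$ and the formula coincides verbatim with \eqref{eq:Lap}. For (b), if $u \in C^\infty_0(\Omega)$ then $u$ vanishes on $\partial\Omega$, so every boundary integral $\int_{\partial\Omega} u\,\partial_\nu \varphi_k$ is zero and \eqref{eq:fLap_g} reduces to Definition~\ref{d:zbc} term by term.

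For (c), denote $u_{\Omega,k}=\int_\Omega u\varphi_k$ and $u_{\partial\Omega,k}=\int_{\partial\Omega} u\,\partial_\nu\varphi_k$, so that $w = \sum_k w_k \varphi_k$ with $w_k = \lambda_k^{1-s} u_{\Omega,k} + \lambda_k^{-s} u_{\partial\Omega,k}$. Since $w$ is a series in the Dirichlet eigenfunctions, it formally has zero boundary trace. Invoking an extension of Part (b) (by density, from $C^\infty_0(\Omega)$ to a suitable zero-trace subspace of $\mathbb H^{2s}(\Omega)$), the boundary term in \eqref{eq:fLap_g} applied to $w$ vanishes and a direct computation gives
\[
(-\Delta_D)^s w = \sum_{k=1}^\infty \lambda_k^s w_k \varphi_k = \sum_{k=1}^\infty \bigl(\lambda_k u_{\Omega,k} + u_{\partial\Omega,k}\bigr)\varphi_k,
\]
which is precisely $-\Delta_D u$ by Part (a), and hence equals $-\Delta u$ almost everywhere by the spectral characterization \eqref{eq:Lap} of the standard Laplacian.

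The main obstacle I expect is the rigorous justification that $w$ lies in the right space and truly has zero trace. Concretely, one must control the series $\sum_k \lambda_k^\sigma w_k^2$ for some $\sigma > \tfrac{1}{2}$, which requires the rapid decay of $u_{\Omega,k}$ coming from $u \in C^\infty(\bar\Omega)$ together with the Weyl-type bound $\|\partial_\nu \varphi_k\|_{L^2(\partial\Omega)} \lesssim \lambda_k^{1/2}$ and an integration by parts on $\partial\Omega$ exploiting the smoothness of $u|_{\partial\Omega}$ to gain decay of $u_{\partial\Omega,k}$. Once this convergence issue is secured, the remaining identity is a one-line rearrangement of Fourier coefficients.
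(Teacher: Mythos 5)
Parts (a) and (b) of your proposal are fine and match the paper's (unelaborated) treatment. For part (c) your overall architecture is also the paper's: show that $w:=(-\Delta_D)^{1-s}u$ lies in $\mathbb{H}^{\sigma}(\Omega)$ for some $\sigma>\tfrac12$, conclude that it has zero trace, drop the boundary term when applying $(-\Delta_D)^s$ to it, and rearrange coefficients. The final coefficient identity you write down is correct.

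The gap is in how you propose to secure the convergence $\sum_k\lambda_k^{\sigma}w_k^2<\infty$, which you yourself flag as the main obstacle. Estimating $u_{\Omega,k}$ and $u_{\partial\Omega,k}$ separately cannot work: for $u\in C^\infty(\bar\Omega)$ that does not vanish on $\partial\Omega$, the coefficients $u_{\Omega,k}$ do \emph{not} decay rapidly --- for $u\equiv 1$ on $(0,1)$ one has $u_k=2/(k\pi)\sim\lambda_k^{-1/2}$ (Section~\ref{s:cont_ex}) --- so $\lambda_k^{1-s}u_{\Omega,k}\sim\lambda_k^{1/2-s}$ does not even tend to zero when $s\le\tfrac12$, and under your Weyl-type bound the second term $\lambda_k^{-s}u_{\partial\Omega,k}$ behaves the same way. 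The series converges only because of the exact cancellation between the two contributions: by the integration-by-parts identity \eqref{eq:dericlassical}, $\lambda_k u_{\Omega,k}+u_{\partial\Omega,k}=\int_\Omega(-\Delta u)\varphi_k$, hence $w_k=\lambda_k^{-s}\int_\Omega(-\Delta u)\varphi_k$ and $\sum_k\lambda_k^{2s+t}w_k^2=\|\Delta u\|^2_{\mathbb{H}^{t}(\Omega)}<\infty$ for any $t\in[0,\tfrac12)$, since $\mathbb{H}^{t}(\Omega)=H^{t}(\Omega)$ in that range and $\Delta u$ is smooth; choosing $t$ with $2s+t>\tfrac12$ then yields the zero trace. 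This is precisely the paper's argument (carried out via Cauchy sequences of partial sums). Your proposed tangential integration by parts to gain extra decay of $u_{\partial\Omega,k}$ is both unnecessary and unlikely to succeed, since $\partial_\nu\varphi_k$ is not an exact tangential derivative of a quantity with controlled norm. Replace that step by the cancellation via \eqref{eq:dericlassical} and the proof closes.
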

\begin{proof}
The first two assertions are easy to check, thus we only elaborate on the last 
one. Let $u\in C^\infty(\bar\Omega)$ and define
\[
	v_l:=\sum_{k=1}^l\left(\lambda_k^{1-s} \int_\Omega u \varphi_k 
	+ \lambda_k^{-s} \int_{\partial\Omega} u \partial_\nu \varphi_k \right) 
	\varphi_k.
\]
Using the orthogonality of the eigenfunction $\varphi_k$ and \eqref{eq:dericlassical}, we obtain for any $t\in[0,\frac12)$ and for any $l,m\in \mathbb{N}$ with $l\geq m$
\begin{align*}
	\|v_l-v_m\|^2_{\mathbb{H}^{2s+t}(\Omega)}&=\sum_{k=m+1}^l\lambda_k^{2s+t}\left(\lambda_k^{1-s} \int_\Omega u \varphi_k 
	+ \lambda_k^{-s} \int_{\partial\Omega} u \partial_\nu \varphi_k \right)^2\\
	&=\sum_{k=m+1}^l\lambda_k^{t}\left(\lambda_k \int_\Omega u \varphi_k 
	+ \int_{\partial\Omega} u \partial_\nu \varphi_k \right)^2\\
	&=\sum_{k=m+1}^l\lambda_k^{t}\left(\int_\Omega -\Delta u \varphi_k  \right)^2\leq \|\Delta u\|^2_{\mathbb{H}^{t}(\Omega)},
\end{align*}
where the last term is bounded independent of $l$ and $m$ since $\Delta u\in H^{t}(\Omega)=\mathbb{H}^{t}(\Omega)$. Thus, according to the Cauchy criterion, the limit $(-\Delta_D)^{1-s}u:=\lim_{l\rightarrow\infty}v_l$ exists in $\mathbb{H}^{2s+t}(\Omega)$. Moreover, we can choose the parameter $t$ such that $2s+t>\frac12$ which implies zero trace of $(-\Delta_D)^{1-s}u$ according to the definition of $\mathbb{H}^s(\Omega)$. Combining the last two observations, we are able to apply $(-\Delta_D)^s$ to $(-\Delta_D)^{1-s}u$. To this end, we define
\[
	w_l=\sum_{k=1}^l \lambda_k^{s}\left( \int_\Omega (-\Delta_D)^{1-s}u \varphi_k \right) 
	\varphi_k.
\]
As before we deduce for any $l,m\in\mathbb{N}$ with $l\geq m$
\begin{align*}
	\|w_l-w_m\|_{L^2(\Omega)}^2&=\sum_{k=m+1}^l\lambda_k^{2s}\left( \int_\Omega (-\Delta_D)^{1-s}u \varphi_k \right)^2\leq \|(-\Delta_D)^{1-s}u\|^2_{\mathbb{H}^{2s}(\Omega)},
\end{align*}
where the last term is bounded independent of $l$ and $m$ since $(-\Delta_D)^{1-s}u\in \mathbb{H}^{2s+t}(\Omega)$. Again, due to the Cauchy criterion, the limit $(-\Delta_D)^{s}(-\Delta_D)^{1-s}u:=\lim_{l\rightarrow\infty}w_l$ exists in $L^2(\Omega).$ 
This allows us to conclude
\begin{align*}
 \int_{\Omega}(-\Delta_D)^s (-\Delta_D)^{1-s} u \varphi_k&=\lambda_k^s\int_{\Omega}(-\Delta_D)^{1-s} u \varphi_k\\
 &=\lambda_k^s \left(\lambda_k^{1-s} \int_\Omega u \varphi_k 
 + \lambda_k^{-s} \int_{\partial\Omega} u \partial_\nu \varphi_k \right)\\
 &=\left(\lambda_k \int_\Omega u \varphi_k 
 +\int_{\partial\Omega} u \partial_\nu \varphi_k \right)\\
 &=\int_{\Omega} -\Delta u \varphi_k,
\end{align*}
where we used the orthogonality of the eigenfunctions $\varphi_k$ several times, and \eqref{eq:dericlassical}. Since $\{\varphi_k\}_{k\in \mathbb{N}}$ represents an orthonormal basis of $L^2(\Omega)$, we get the desired result. 
\end{proof}

\subsubsection{A fix to the counter example}\label{s:cont_ex_1}
Towards this end we shall apply our definition of fractional Laplacian
to the example discussed in Section~\ref{s:cont_ex}. Since $u\equiv 1$, 
we have
\begin{align*}
 (-\Delta_D)^s 1 
  &=\sum_{k=1}^\infty \left( \lambda_k^s u_k 
    + \lambda_k^{s-1}\int_{\partial\Omega} 1 \partial_\nu \varphi_k \right)\varphi_k .
\end{align*}
Recall that 
\[
 u_k = \left\{\begin{array}{ll}
                0 & \mbox{if } k\mbox{ even}, \\
                \frac{2}{k\pi} & \mbox{if } k\mbox{ odd} .
              \end{array}
       \right.
\]
It is easy to check that 
\[
  \partial_\nu \varphi_k = \left\{\begin{array}{ll} 
                                    k\pi (-1)^k & x=1, \\
                                    -k\pi       & x=0 .  
                                  \end{array} \right.
\]
This yields that
\[
 \int_{\partial\Omega} 1 \partial_\nu \varphi_k = k\pi (-1)^k -k\pi 
 = \left\{\begin{array}{ll} 
           0 & \mbox{if } k\mbox{ even}, \\
           -2k\pi  & \mbox{if } k\mbox{ odd} .  
          \end{array} \right.
\]
Then 
\begin{align*}
(-\Delta_D)^s 1 
 &= \sum_{k=1}^\infty \left( \lambda_{2k-1}^s \frac{2}{(2k-1)\pi} 
   + \lambda_{2k-1}^{s-1} (-2(2k-1)\pi)  \right)\varphi_k \\
 &= \sum_{k=1}^\infty \left( (2k-1)^{2s}\pi^{2s} \frac{2}{(2k-1)\pi} 
   + (2k-1)^{2(s-1)} \pi^{2(s-1)} (-2(2k-1)\pi)  \right)\varphi_k \\
 &=0 .   
\end{align*}

\subsection{Nonzero Neumann Boundary Data}\label{s:nn}
As in Section~\ref{s:nzbc}, in order to motivate our definition for the fractional 
nonhomogeneous Neumann Laplacian, we first derive such a characterization for the standard 
Laplacian.
\begin{proposition}
For any $u \in C^\infty(\bar\Omega)$ we have that
\begin{equation}\label{eq:Lap_n}
 \begin{split}
 -\Delta_N u &:= \sum_{k=2}^\infty \left(\mu_k \int_\Omega u \psi_k 
     - \int_{\partial\Omega} \partial_\nu u \psi_k \right) \psi_k+|\Omega|^{-1}\int_\Omega-\Delta u\\
     &= \sum_{k=2}^\infty \left(\mu_k \int_\Omega u \psi_k 
     - \int_{\partial\Omega} \partial_\nu u \psi_k \right) \psi_k - |\Omega|^{-1}\int_{\partial \Omega} \partial_\nu u.
 \end{split}
\end{equation}
fulfills $-\Delta_N u = -\Delta u$ a.e. in $\Omega$.
\end{proposition}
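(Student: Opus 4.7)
The plan is to mimic the Dirichlet proof by computing the Fourier coefficients of $-\Delta u$ in the orthonormal Neumann basis $\{\psi_k\}_{k\geq 1}$ of $L^2(\Omega)$, and observing that the claimed formula is exactly this expansion.

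First I would handle the modes $k\geq 2$. Applying Green's identity twice and using that the Neumann eigenfunctions satisfy $\partial_\nu \psi_k = 0$ on $\partial\Omega$, we obtain
\begin{align*}
\int_\Omega -\Delta u\, \psi_k &= \int_\Omega \nabla u\cdot\nabla\psi_k - \int_{\partial\Omega}\partial_\nu u\,\psi_k \\
&= \int_\Omega u(-\Delta\psi_k) + \int_{\partial\Omega} u\,\underbrace{\partial_\nu\psi_k}_{=0} - \int_{\partial\Omega}\partial_\nu u\,\psi_k \\
&= \mu_k\int_\Omega u\,\psi_k - \int_{\partial\Omega}\partial_\nu u\,\psi_k,
\end{align*}
which produces precisely the coefficient appearing in \eqref{eq:Lap_n} for $k\geq 2$.

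Next I would treat the zeroth mode separately. Since $\psi_1 = 1/\sqrt{|\Omega|}$, the coefficient of $\psi_1$ in the $L^2$-expansion of $-\Delta u$ is
\[
\left(\int_\Omega -\Delta u\,\psi_1\right)\psi_1 = \frac{1}{|\Omega|}\int_\Omega -\Delta u,
\]
which matches the first form of the extra term in \eqref{eq:Lap_n}. The equivalence of the two expressions in \eqref{eq:Lap_n} then follows immediately from the divergence theorem, $\int_\Omega -\Delta u = -\int_{\partial\Omega}\partial_\nu u$.

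Finally, because $\{\psi_k\}_{k\geq 1}$ is an orthonormal basis of $L^2(\Omega)$ and $-\Delta u\in L^2(\Omega)$ for $u\in C^\infty(\bar\Omega)$, the Parseval expansion
\[
-\Delta u = \sum_{k=1}^\infty\left(\int_\Omega -\Delta u\,\psi_k\right)\psi_k
\]
converges in $L^2(\Omega)$; substituting the two coefficient computations above yields $-\Delta_N u = -\Delta u$ a.e. in $\Omega$. There is no real obstacle here: the argument is a routine bookkeeping exercise in Green's identity and Fourier expansion, the only subtlety being the careful separation of the constant eigenmode $\psi_1$, which is why the extra term $|\Omega|^{-1}\int_\Omega -\Delta u$ (or equivalently $-|\Omega|^{-1}\int_{\partial\Omega}\partial_\nu u$) must be added to the sum indexed from $k=2$.
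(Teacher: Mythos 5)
Your proof is correct and follows essentially the same route as the paper: two applications of Green's identity using $\partial_\nu\psi_k=0$ to identify the coefficients for $k\geq 2$, separate treatment of the constant mode $\psi_1=1/\sqrt{|\Omega|}$, the divergence theorem to reconcile the two forms of the extra term, and the orthonormal basis expansion of $-\Delta u$ in $L^2(\Omega)$ to conclude.
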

\begin{proof}
Standard integration-by-parts formula yields
\begin{align}
\int_{\Omega} -\Delta u \psi_k 
&= \int_{\Omega} \nabla u \cdot \nabla \psi_k
- \int_{\partial \Omega} \partial_\nu u \psi_k \notag\\
&= \int_{\Omega} -\Delta \psi_k u - \int_{\partial \Omega} \partial_\nu u  \psi_k + \int_{\partial \Omega} u \underbrace{\partial_\nu \psi_k}_{=0}\notag\\
&= \mu_k\int_{\Omega} \psi_k u - \int_{\partial \Omega} \partial_\nu u \psi_k,\label{eq:dericlassicN}
\end{align}
where in the last equality we have used the fact that $\psi_k$ are the eigenfunctions of Laplacian with eigenvalues $\mu_k$. This yields the desired representation of $-\Delta$ having in mind that $\{\psi_k\}_{k\in\mathbb{N}}$ represents an orthonormal basis of $L^2(\Omega)$, and that
\[\left(\int_\Omega-\Delta u \psi_1\right)\psi_1=|\Omega|^{-1}\int_\Omega-\Delta u=- |\Omega|^{-1}\int_{\partial\Omega}\partial_\nu u.\]
\end{proof}
As for the Dirichlet Laplacian, if $\int_{\partial\Omega}\partial_\nu u=0$, the operator $-\Delta_N$ can be extended to an operator mapping from 
$H_{\int}^1(\Omega)$ to $H_{\int}^{-1}(\Omega)$  in the classical way.
We are now ready to state our definition of fractional Laplacian with nonzero Neumann boundary conditions.

\begin{definition}[nonzero Neumann]\label{d:nzbc_n}
We define the spectral fractional Laplacian on $C^\infty(\bar{\Omega})$ by 
\begin{equation}\label{eq:fLap_g_n}
(-\Delta_N)^s u := \sum_{k=2}^\infty \left(\mu_k^s \int_\Omega u \psi_k 
     - \mu_k^{s-1} \int_{\partial\Omega} \partial_\nu u  \psi_k \right) \psi_k- |\Omega|^{-1}\int_{\partial\Omega}\partial_\nu u.
\end{equation}
\end{definition}
Note that we have $\int_{\Omega}(-\Delta_N)^s u=-\int_{\partial\Omega}\partial_\nu u$ by construction. However, different types of normalization are possible as well. Similar to the foregoing section, let us set
\[
u_{\Omega,k}=\int_\Omega u \psi_k\quad\text{and}\quad 
u_{\partial\Omega,k}=\int_{\partial\Omega} \partial_\nu u \psi_k.
\]
For any $v=\sum_{k=2}^\infty v_k\psi_k\in 
H_{\int}^s(\Omega)$ we observe that
\begin{align*}
\int_\Omega (-\Delta_{N})^s u v 
&=\sum_{k=2}^\infty 
\left(\mu_k^s 
u_{\Omega,k} 
- \mu_k^{s-1} u_{\partial\Omega,k} 
\right)v_k=\sum_{k=2}^\infty\mu_k^{s/2} 
\left(u_{\Omega,k} - \mu_k^{-1} u_{\partial\Omega,k} 
\right)\mu_k^{s/2}v_k\\
&\leq \left(\sum_{k=2}^\infty 
\mu_k^{s}\left(u_{\Omega,k} - \mu_k^{-1} u_{\partial\Omega,k} 
\right)^2\right)^{1/2}\|v\|_{{H}_{\int}^s(\Omega)},
\end{align*}
where we employed the orthogonality of the eigenfunctions $\psi_k$. From \eqref{eq:fLap_g_n} it follows that $\int_{\Omega}(-\Delta_N)^s u=-|\Omega|^{-1}\int_{\partial\Omega}\partial_\nu u$ then under the assumption that $\int_{\partial\Omega}\partial_\nu u=0$,
the operator 
$(-\Delta_N)^s$ is extendable to an operator mapping from
\[
\mathbb{N}^s(\Omega):=\Big\{u=\sum_{k=2}^\infty u_k\psi_k\in L^2(\Omega):\; 
\sum_{k=2}^\infty 
\mu_k^{s}\left(u_{\Omega,k} - \mu_k^{-1} u_{\partial\Omega,k} 
\right)^2<\infty\Big\}
\]
to ${H}_{\int}^{-s}(\Omega)$, see also Section \ref{s:app_n} where
associated boundary value problems are considered.

Similar to the Dirichlet case, we get the following properties.
\begin{proposition}
Let $(-\Delta_N)^s$ be as in Definition \ref{d:nzbc_n} then the following holds:
\begin{enumerate}
\item When $s = 1$ we obtain the standard Laplacian \eqref{eq:Lap_n}.
\item For any $u\in C^\infty(\bar\Omega)$ with $\partial_\nu u =0$ there 
holds \[(-\Delta_N)^s u = (-\Delta_{N,0})^s u,\] a.e. in  $\Omega$, i.e., we recover the Definition~\ref{d:zbc_n}.
\item For any $s\in (0,1)$ and for any $u \in C^\infty(\bar\Omega)$ with $\int_{\partial\Omega}\partial_\nu u =0$ there is the identity 
\[
(-\Delta_N)^s (-\Delta_N)^{1-s} u = -\Delta u
\] a.e. in $\Omega$.
\end{enumerate}
\end{proposition}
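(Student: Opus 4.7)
The three assertions parallel those of Proposition~\ref{p:presvD}, and I would handle them in order of difficulty. For (1), substituting $s=1$ in \eqref{eq:fLap_g_n} preserves every coefficient $\mu_k\int_\Omega u\,\psi_k - \int_{\partial\Omega}\partial_\nu u\,\psi_k$ for $k\geq 2$, while the constant correction $-|\Omega|^{-1}\int_{\partial\Omega}\partial_\nu u$ reproduces the $\psi_1$-component in the second form of \eqref{eq:Lap_n}. For (2), the hypothesis $\partial_\nu u = 0$ on $\partial\Omega$ forces both $\int_{\partial\Omega}\partial_\nu u\,\psi_k = 0$ and $\int_{\partial\Omega}\partial_\nu u = 0$, so \eqref{eq:fLap_g_n} collapses to $\sum_{k\geq 2}\mu_k^s u_k\psi_k$, which is precisely Definition~\ref{d:zbc_n}.

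The heart of the argument is (3). The plan is to mimic the Dirichlet proof via partial sums, exploiting the pivotal fact that the Neumann eigenfunctions themselves satisfy $\partial_\nu\psi_k = 0$. I would set
\[
v_l := \sum_{k=2}^l \left(\mu_k^{1-s}\int_\Omega u\,\psi_k - \mu_k^{-s}\int_{\partial\Omega}\partial_\nu u\,\psi_k\right)\psi_k,
\]
and observe via \eqref{eq:dericlassicN} that the bracketed coefficient equals $\mu_k^{-s}\int_\Omega(-\Delta u)\,\psi_k$. Since $\int_{\partial\Omega}\partial_\nu u = 0$ yields $\int_\Omega(-\Delta u) = 0$, the smooth function $-\Delta u$ lies in $H^t_{\int}(\Omega)$, and a Cauchy estimate analogous to Proposition~\ref{p:presvD},
\[
\|v_l - v_m\|^2_{H^{2s+t}_{\int}} = \sum_{k=m+1}^l \mu_k^t\Big(\int_\Omega(-\Delta u)\,\psi_k\Big)^2 \leq \|{-\Delta u}\|^2_{H^t_{\int}},
\]
produces convergence of $v_l$ in $H^{2s+t}_{\int}(\Omega)$; since the constant correction in \eqref{eq:fLap_g_n} also vanishes under the hypothesis, the limit defines $(-\Delta_N)^{1-s}u$. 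Because $\partial_\nu v_l \equiv 0$, every boundary contribution in $(-\Delta_N)^s v_l$ drops out, leaving
\[
(-\Delta_N)^s v_l = \sum_{k=2}^l \int_\Omega(-\Delta u)\,\psi_k\,\psi_k =: w_l.
\]
This $w_l$ is Cauchy in $L^2(\Omega)$ with $\|w_l\|_{L^2} \leq \|{-\Delta u}\|_{L^2}$, so $(-\Delta_N)^s(-\Delta_N)^{1-s}u := \lim_l w_l$ is well defined. Matching $\psi_k$-coefficients: for $k\geq 2$ the limit gives $\int_\Omega(-\Delta u)\,\psi_k$, and for $k=1$ both sides vanish (using $\int_\Omega(-\Delta u) = 0$), so orthonormality of $\{\psi_k\}$ in $L^2(\Omega)$ yields $(-\Delta_N)^s(-\Delta_N)^{1-s}u = -\Delta u$ almost everywhere.

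The main obstacle is formal rather than substantive: one must be careful that the composition of operators, each extended by density or duality, is correctly interpreted as the limit of applications to partial sums. In this respect the Neumann case is actually slightly easier than its Dirichlet counterpart in Proposition~\ref{p:presvD}, where one needs $2s+t>1/2$ to force zero trace of the intermediate function; here the eigenfunctions $\psi_k$ carry vanishing Neumann trace by construction, so the boundary terms disappear identically at the level of partial sums and no regularity threshold on $s$ is needed.
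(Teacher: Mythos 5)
Parts (1) and (2) are correct and match the paper. For part (3), your construction of $(-\Delta_N)^{1-s}u$ as the limit of the partial sums $v_l$, the identification of the coefficients with $\mu_k^{-s}\int_\Omega(-\Delta u)\psi_k$ via \eqref{eq:dericlassicN}, and the computation of $\lim_l w_l$ all agree with the paper. The gap is in the step where you pass from $(-\Delta_N)^s v_l$ to $(-\Delta_N)^s\bigl((-\Delta_N)^{1-s}u\bigr)$. The identity to be proved concerns the operator of Definition~\ref{d:nzbc_n} applied to the \emph{limit} function, and that definition explicitly involves $\int_{\partial\Omega}\partial_\nu w\,\psi_k$ for its argument $w$. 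You only show that these boundary terms vanish for each partial sum $v_l$ (because $\partial_\nu\psi_k=0$), and then identify $\lim_l(-\Delta_N)^s v_l$ with $(-\Delta_N)^s\lim_l v_l$ without justification. This interchange is not merely formal: the functional $w\mapsto\int_{\partial\Omega}\partial_\nu w\,\psi_k$ is not continuous in the topology in which $v_l$ converges, and the limit $(-\Delta_N)^{1-s}u$ is only known to lie in $H^{2s+1}_{\int}(\Omega)\cap H^1(\Omega)$, which for small $s$ is not enough regularity to define its normal derivative by standard trace theory.

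This is precisely where the paper spends the bulk of its proof: it shows that $\Delta v_l$ is Cauchy in $H^1(\Omega)^*$, invokes the weak definition of the normal derivative on $\{v\in H^1(\Omega):\Delta v\in H^1(\Omega)^*\}$ with values in $H^{-1/2}(\partial\Omega)$, and then verifies via the weak Green identity that $\int_{\partial\Omega}\partial_\nu\bigl((-\Delta_N)^{1-s}u\bigr)\psi_k=0$ for every $k$, including $k=1$. Only after this is it legitimate to apply Definition~\ref{d:nzbc_n} to $(-\Delta_N)^{1-s}u$ with the boundary contributions dropped. Your closing remark that the Neumann case is ``slightly easier'' than the Dirichlet one is therefore backwards: in the Dirichlet case the homogeneous boundary condition of the intermediate function follows from a simple embedding (choosing $2s+t>1/2$ forces zero trace), whereas here one needs the weak normal-derivative machinery. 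To repair your argument you must either supply this step or prove directly that the boundary pairing is continuous along the sequence $v_l$, which amounts to the same thing.
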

\begin{proof}
The first two assertions are again easy to check. To show the third one, let $u\in C^\infty(\bar\Omega)$ with $\int_{\partial\Omega}\partial_\nu u =0$
and define
\begin{equation}\label{eq:v_l}
	v_l:=\sum_{k=2}^l \left(\mu_k^{1-s} \int_\Omega u \psi_k 
	- \mu_k^{-s}\int_{\partial\Omega} \partial_\nu u \psi_k \right) 
	\psi_k.
\end{equation}
Using the orthogonality of the eigenfunctions $\psi_k$ several times, and \eqref{eq:dericlassicN}, we deduce for any $t\in[0,1]$ and for any $l,m\in\mathbb{N}$ with $l\geq m\geq 2$
\begin{align*}
	\|v_l-v_m\|_{H^{2s+t}_{\int}(\Omega)}^2&=\sum_{k=m+1}^l\mu_k^{2s+t} \left(\mu_k^{1-s} \int_\Omega u \psi_k 
	- \mu_k^{-s}\int_{\partial\Omega} \partial_\nu u \psi_k \right)^2\\
	&=\sum_{k=m+1}^l \mu_k^t\left(\mu_k \int_\Omega u \psi_k 
	- \int_{\partial\Omega} \partial_\nu u \psi_k \right)^2\\
	&=\sum_{k=m+1}^l \mu_k^t\left(\int_\Omega -\Delta u \psi_k\right)^2\leq\|\Delta u\|_{H^{t}_{\int}(\Omega)}^2,
\end{align*}
where the last term is bounded independent of $l$ and $m$ since $\Delta u\in H^1(\Omega)$. Thus, according to the Cauchy criterion, there exists a function $(-\Delta_N)^{1-s}u\in H_{\int}^{2s+1}(\Omega)\cap H^1(\Omega)$ with
\[
\lim_{l\rightarrow\infty}\|(-\Delta_N)^{1-s}u-v_l\|_{L^2(\Omega)}=0.
\]
Next, using the definition and the orthogonality of the eigenvalues and eigenfunctions once again, we deduce
\begin{align*}
\|\Delta v_l&-\Delta v_m\|_{H^{1}(\Omega)^*}=\sup_{\substack{\varphi\in H^1(\Omega)\\ \|\varphi\|_{H^1(\Omega)}=1}}\left|\int_{\Omega}\Delta(v_l-v_m)\varphi\right|\\
&=\sup_{\substack{\varphi\in H^1(\Omega)\\ \|\varphi\|_{H^1(\Omega)}=1}}\left|\sum_{k=m+1}^l\mu_k \left(\mu_k^{1-s} \int_\Omega u \psi_k 
- \mu_k^{-s}\int_{\partial\Omega} \partial_\nu u \psi_k \right)\left(\int_{\Omega}\varphi\psi_k\right)\right|\\
&\leq\sup_{\substack{\varphi\in H^1(\Omega)\\ \|\varphi\|_{H^1(\Omega)}=1}}\left(\sum_{k=m+1}^l\mu_k \left(\mu_k^{1-s} \int_\Omega u \psi_k 
- \mu_k^{-s}\int_{\partial\Omega} \partial_\nu u \psi_k \right)^2\right)^{1/2}\left(\sum_{k=m+1}^l\mu_k\left(\int_{\Omega}\varphi\psi_k\right)^2\right)^{1/2}\\
&\leq\left(\sum_{k=m+1}^l\mu_k^{1-2s} \left(\mu_k \int_\Omega u \psi_k 
- \int_{\partial\Omega} \partial_\nu u \psi_k \right)^2\right)^{1/2}=\left(\sum_{k=m+1}^l \mu_k^{1-2s}\left(\int_\Omega -\Delta u \psi_k\right)^2\right)^{1/2},
\end{align*}
where the last term is again bounded independent of $l$ and $m$ since $\Delta u\in H^1(\Omega)$. Again, due to the Cauchy criterion, there exists a function $v^*\in H^1(\Omega)^*$ with
\[
\lim_{l\rightarrow\infty}\|v^*-\Delta v_l\|_{H^{1}(\Omega)^*}=0.
\]
Moreover, for all $\varphi\in C_0^\infty(\Omega)$ we have
\begin{align*}
\int_{\Omega}(-\Delta_N)^{1-s}u\Delta\varphi=\lim_{l\rightarrow \infty}\int_{\Omega}v_l\Delta\varphi=\lim_{l\rightarrow \infty}\int_{\Omega}\Delta v_l\varphi=\int_{\Omega} v^*\varphi.
\end{align*}
Consequently, $v^*$ represents the Laplacian of $(-\Delta_N)^{1-s}u$ in the sense of distributions. In addition, we obtain $\Delta(-\Delta_N)^{1-s}u\in H^{1}(\Omega)^*$ with
\[
\lim_{l\rightarrow\infty}\|\Delta(-\Delta_N)^{1-s}u-\Delta v_l\|_{H^{1}(\Omega)^*}=0.
\]
According to \cite[Appendix A]{MR2201310} and \cite[Section 3]{gesztesy2008dirichlet} the normal derivative can be defined in a weak sense as a mapping from
\[
	\{v\in H^1(\Omega): \Delta v \in H^{1}(\Omega)^*\}\quad \text{to}\quad H^{-1/2}(\partial\Omega)
\]
by
\[
	\int_{\partial\Omega}\partial_\nu u\varphi=\int_{\Omega}\Delta u \varphi +\int_\Omega\nabla u\cdot \nabla \varphi \quad \forall \varphi\in H^1(\Omega),
\]
which is an extension of the classical normal derivative.
As a consequence, we obtain by means of the foregoing results and the definition of the eigenvalues and eigenfunctions for $k\geq2$
\begin{align*}
\int_{\partial\Omega}&\partial_\nu (-\Delta_N)^{1-s}u\psi_k=\int_{\Omega}\Delta (-\Delta_N)^{1-s}u \psi_k +\int_\Omega\nabla (-\Delta_N)^{1-s}u\cdot \nabla \psi_k\\
&=\lim_{l\rightarrow\infty}\int_{\Omega}\Delta v_l \psi_k +\mu_k\int_\Omega (-\Delta_N)^{1-s}u \psi_k\\
&=-\mu_k \left(\mu_k^{1-s} \int_\Omega u \psi_k 
- \mu_k^{-s}\int_{\partial\Omega} \partial_\nu u \psi_k \right)+\mu_k\left(\mu_k^{1-s} \int_\Omega u \psi_k 
- \mu_k^{-s}\int_{\partial\Omega} \partial_\nu u \psi_k \right)\\
&=0.
\end{align*}
For $k=1$ we get
\[
	\int_{\partial\Omega}\partial_\nu (-\Delta_N)^{1-s}u\psi_1=\int_{\Omega}\Delta (-\Delta_N)^{1-s}u \psi_1=\lim_{l\rightarrow\infty}\int_{\Omega}\Delta v_l \psi_1=0.
\]
The above observations allow us to apply $(-\Delta_N)^s$ to $(-\Delta_N)^{1-s}u$.
For that purpose, we define
\[
w_l=\sum_{k=1}^l \mu_k^{s}\left( \int_\Omega (-\Delta_N)^{1-s}u \psi_k \right) 
\psi_k.
\]
As before we deduce for any $l,m\in\mathbb{N}$ with $l\geq m$
\begin{align*}
\|w_l-w_m\|_{L^2(\Omega)}^2&=\sum_{k=m+1}^l\mu_k^{2s}\left( \int_\Omega (-\Delta_N)^{1-s}u \psi_k \right)^2\leq\|(-\Delta_N)^{1-s} u\|_{H^{2s}_{\int}(\Omega)}^2,
\end{align*}
where the last term is bounded independent of $l$ and $m$ since $(-\Delta_N)^{1-s}u\in H_{\int}^{2s+1}(\Omega)\cap H^1(\Omega)$. As a consequence, due to the Cauchy criterion, the limit $(-\Delta_N)^{s}(-\Delta_N)^{1-s}u:=\lim_{l\rightarrow\infty}w_l$ exists in $L^2(\Omega).$
Finally, according to the orthogonality of the eigenfunctions $\psi_k$ and \eqref{eq:dericlassicN}, we obtain for $k>1$
\begin{align*}
\int_{\Omega}(-\Delta_N)^s (-\Delta_N)^{1-s} u \psi_k&= \mu_k^{s}
\int_{\Omega} (-\Delta_N)^{1-s}u \psi_k = \mu_k^{s}\left(\mu_k^{1-s} \int_\Omega u \psi_k 
- \mu_k^{-s}\int_{\partial\Omega} \partial_\nu u \psi_k \right)\\
&= \mu_k \int_\Omega u \psi_k - \int_{\partial\Omega} \partial_\nu u \psi_k= \int_{\Omega} -\Delta u \psi_k.
\end{align*}
Moreover, we deduce
\[
	\int_{\Omega}(-\Delta_N)^s (-\Delta_N)^{1-s} u \psi_1=-\int_{\partial\Omega}\partial_\nu (-\Delta_N)^{1-s}u\psi_1=0=-\int_{\partial\Omega}\partial_\nu u\psi_1=\int_{\Omega} -\Delta u \psi_1.
\]
Since $\{\psi_k\}_{k\in\mathbb{N}}$ represents an orthonormal basis of $L^2(\Omega)$, we conclude the result.
\end{proof}

\section{Trace Theorems and Integration-by-parts Type Formulas}\label{s:Ntrthm}

The purpose of this section is to state the Neumann trace space 
for $H^2(\Omega) \cap H^1_0(\Omega)$ and the Dirichlet trace space for 
functions belonging to $\{v\in H^2(\Omega) \;:\;\partial_\nu v =0 \text{ a.e. on }\partial\Omega\}$.

We begin by 
introducing 
the reflexive Banach space $N^{s}(\partial\Omega)$ with $s\in[0,\frac12]$ which is 
defined as
\begin{equation}\label{eq:spaceN}
 N^{s}(\partial\Omega) := \{ g \in L^2(\partial\Omega) \;:\; g\nu_k \in H^{s}(\partial\Omega), 
 \ 1 \le k \le n \} 
\end{equation}
with norm
\begin{equation}\label{eq:normN}
 \|g\|_{N^{s}(\partial\Omega)} = \sum_{k=1}^n \|g 
 \nu_k\|_{H^{s}(\partial\Omega)} . 
\end{equation}
Due to the fact that
\[
	\|g\|_{L^2(\partial\Omega)}\sim \|g\|_{N^0(\partial\Omega)}\quad\forall g\in L^2(\partial\Omega),
\]
we notice that
	\begin{equation}\label{eq:equivalence1}
N^{s}(\partial\Omega)=[{N^{1/2}(\partial\Omega)},{L^{2}(\partial\Omega)}]_{1-2s},
\end{equation}
which can be deduced by classical results of real interpolation.

For $s=\frac12$ we state the following trace theorem for the Neumann trace operator.

\begin{lemma}\label{l:Nspace}
Let $n \ge 2$ and $\Omega$ be a bounded Lipschitz domain. Then the Neumann trace operator $\partial_\nu$
\[
 \partial_\nu : H^1_0(\Omega)\cap H^2(\Omega) \rightarrow N^{1/2}(\partial\Omega)
\]
is well-defined, linear, bounded, onto, and with a linear, bounded right inverse. 
Additionally, the null space of $\partial_\nu$ is $H^2_0(\Omega)$, the closure of 
$C_0^\infty(\Omega)$ in $H^2(\Omega)$. 
\end{lemma}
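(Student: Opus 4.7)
The plan is to establish the three assertions in the following order: (i) well-definedness, linearity, and boundedness of $\partial_\nu$; (ii) identification of the null space as $H^2_0(\Omega)$; and (iii) surjectivity together with the construction of a bounded right inverse.

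For (i), I would exploit that for $u\in H^1_0(\Omega)\cap H^2(\Omega)$ each partial derivative $\partial_i u$ lies in $H^1(\Omega)$, and hence has a trace in $H^{1/2}(\partial\Omega)$ with norm controlled by $\|u\|_{H^2(\Omega)}$ via the classical trace theorem on Lipschitz domains. Since $u|_{\partial\Omega}=0$, all tangential derivatives of $u$ along $\partial\Omega$ vanish, so the boundary gradient is purely normal, i.e.\ $\nabla u|_{\partial\Omega}=(\partial_\nu u)\nu$. This yields the pointwise identity $\nu_i\partial_\nu u=(\partial_i u)|_{\partial\Omega}\in H^{1/2}(\partial\Omega)$ for each $i$, which by definition of \eqref{eq:spaceN}--\eqref{eq:normN} places $\partial_\nu u$ into $N^{1/2}(\partial\Omega)$ with the required operator bound. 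Linearity is obvious.

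For (ii), the inclusion $H^2_0(\Omega)\subset\ker\partial_\nu$ follows by the trace theorem applied to $C_0^\infty(\Omega)$ and density. For the converse, if $\partial_\nu u=0$ in $N^{1/2}(\partial\Omega)$, then $\nu_i\partial_\nu u=0$ in $H^{1/2}(\partial\Omega)$ and so, by the identity from step (i), every trace $(\partial_i u)|_{\partial\Omega}$ vanishes. Combined with $u|_{\partial\Omega}=0$, the classical characterization of $H^2_0(\Omega)$ on Lipschitz domains as the kernel of the double trace $v\mapsto(v|_{\partial\Omega},\nabla v|_{\partial\Omega})$ gives $u\in H^2_0(\Omega)$.

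For (iii), I would localize: fix a finite open cover of $\partial\Omega$ by sets $U_j$ on which $\Omega$ can be flattened by a bi-Lipschitz chart, together with a subordinate partition of unity $\{\chi_j\}$. In each chart $\Omega\cap U_j$ becomes a half-space $\{x_n>0\}$ with $\nu=-e_n$, and the condition $g\nu_i\in H^{1/2}(\partial\Omega)$ transfers, under the chart, to the statement that the pull-back $\tilde g$ lies in $H^{1/2}$ on the flat boundary piece. I would then build a local lift of the form $u_j(x',x_n)=-x_n\,G_j(x',x_n)$, where $G_j$ is a Poisson-type extension of $\tilde g$ into $\{x_n>0\}$. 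A direct computation shows that the only second derivatives of $u_j$ that are not automatically square integrable from $G_j\in H^1$ are those of the form $x_n\,D^2 G_j$; square-integrability of these reduces to the Hardy-type weighted estimate $\int x_n|D^2 G_j|^2\,dx\lesssim\|\tilde g\|_{H^{1/2}}^2$, which is precisely the point at which $N^{1/2}$-regularity is consumed. Pulling back, multiplying by $\chi_j$, and summing produces a global lift $Eg\in H^2(\Omega)\cap H^1_0(\Omega)$ satisfying $\partial_\nu(Eg)=g$ and $\|Eg\|_{H^2(\Omega)}\lesssim\|g\|_{N^{1/2}(\partial\Omega)}$.

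The main obstacle is step (iii): on a merely Lipschitz boundary the outward normal $\nu$ is only $L^\infty$, so one cannot freely exchange information between $g$ and its components $g\nu_i$, and the bi-Lipschitz chart changes must be handled carefully to propagate the $N^{1/2}$-regularity through the localization without destroying the Hardy-type control on the Hessian of the extension. Alternatively, this step can be short-circuited by invoking known trace results for $H^2$ on Lipschitz domains from the literature (e.g., the Gesztesy--Mitrea framework cited in the surrounding sections), but the direct construction above is self-contained.
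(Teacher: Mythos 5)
The paper offers no proof of this lemma at all: it simply cites \cite[Lemma~6.3]{MR2788354}, so your closing fallback (invoking the Gesztesy--Mitrea framework) is in fact the paper's entire argument. Your steps (i) and (ii) are essentially correct as a sketch of why that cited result holds: the identity $\nabla u|_{\partial\Omega}=(\partial_\nu u)\nu$, obtained from the vanishing of the tangential derivatives $\nu_j\partial_k u-\nu_k\partial_j u$ on $\partial\Omega$, is the right mechanism for both boundedness into $N^{1/2}(\partial\Omega)$ and the identification of the kernel, although you should flag that the characterization of $H^2_0(\Omega)$ by vanishing of the two traces on a merely Lipschitz boundary is itself a nontrivial theorem, not a routine density argument.

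The genuine gap is in step (iii). You assert that, after a bi-Lipschitz flattening, the hypothesis $g\nu_i\in H^{1/2}(\partial\Omega)$ for all $i$ ``transfers to the statement that the pull-back $\tilde g$ lies in $H^{1/2}$ on the flat boundary piece.'' That is precisely the identification $N^{1/2}(\partial\Omega)=H^{1/2}(\partial\Omega)$, which fails for general Lipschitz domains: since $\nu$ is only $L^\infty$, membership of the products $g\nu_i$ in $H^{1/2}$ neither implies nor is implied by $g\in H^{1/2}$, and the paper's Remark~\ref{r:Nspace} records that the two spaces coincide only under $C^{1,r}$, $r>1/2$, regularity of the boundary. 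Consequently your scalar Poisson-type lift $u_j=-x_n G_j$ of $\tilde g$ is built from data whose $H^{1/2}$-regularity you have not established, and the Hardy-type Hessian estimate you rely on is exactly the point where the argument breaks. A correct construction must work directly with the vector-valued Whitney data $(0,\,g\nu_1,\dots,g\nu_n)\in L^2(\partial\Omega)\times (H^{1/2}(\partial\Omega))^n$, i.e.\ extend the array $\{g\nu_i\}_i$ rather than the scalar $g$; this is what the cited reference does, and it is not recoverable from your localization as written.
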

\begin{proof}
See Lemma 6.3 of \cite{MR2788354}.
\end{proof}

\begin{remark}[Relation between $N^{s}(\partial\Omega)$ and $H^{s}(\partial\Omega)$ for $s \in {[0,\frac12]}$]
\label{r:Nspace}  
If $\Omega$ is of class $C^{1,r}$ with $r > 1/2$ then $N^{s}(\partial\Omega) = H^{s}(\partial\Omega)$ for $s\in[0,\frac12]$, see \cite[Lemma 6.2]{MR2788354}. 
\end{remark}

Next, we state an integration-by-parts type formula which relates $(-\Delta_D)^s$ to $(-\Delta_{D,0})^s$. In order to do so, we need to assume that the domain $\Omega$ is quasi-convex, see \cite[Definition 8.9]{MR2788354}. The latter is a subset of bounded Lipschitz domains which is locally \emph{almost convex}. For a precise definition of an \emph{almost convex} domain we refer to \cite[Definition 8.4]{MR2788354}. In the class of bounded Lipschitz domains the following sequence holds (see \cite{MR2788354}):
\[
\mbox{convex} \implies \mbox{UEBC} \implies \mbox{LEBC} \implies \mbox{almost convex} \implies \mbox{quasi-convex}  
\]
where UEBC and LEBC stands for bounded Lipschitz domains which fulfill the uniform exterior ball condition and local exterior ball condition, respectively. We further remark that a bounded Lipschitz domain which fulfills UEBC is also known as semiconvex domain \cite[Theorem~3.9]{MR2593333}.
\begin{theorem}[Dirichlet: integration-by-parts formula]\label{theorem:Dint}
	Let $\Omega$ be a bounded quasi-convex domain. Moreover, let $u \in \mathbb{D}^{2s}(\Omega)$ with $u|_{\partial\Omega}\in N^{1/2}(\partial\Omega)^*$ and $v \in \mathbb{H}^{2s}(\Omega)$. Then the following integration-by-parts formula holds
	\begin{align*}
	\int_{\Omega}(-\Delta_D)^suv&=
	\int_\Omega u (-\Delta_{D,0})^sv+\int_{\partial\Omega}u \partial_\nu w_v,
	\end{align*}	
	where $w_v \in \mathbb{H}^{2}(\Omega)$ is defined as the solution to
	\begin{align}
	(-\Delta_{D,0})^{1-s}w_v=v\quad \text{in } \Omega,\quad w_v=0\quad\text{on 
	}\partial \Omega.\label{eq:wv}
	\end{align}
\end{theorem}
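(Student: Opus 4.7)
The plan is to expand $v$ in the eigenbasis $\{\varphi_k\}$, write the left-hand side as a series, split off the term that produces $(-\Delta_{D,0})^sv$, and then identify the remainder with the boundary pairing $\int_{\partial\Omega} u\,\partial_\nu w_v$ by comparing series expansions of $w_v$ and $\partial_\nu w_v$.

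First, I would set $v = \sum_{k=1}^\infty v_k \varphi_k$ with $\sum_k \lambda_k^{2s}v_k^2 < \infty$ (since $v \in \mathbb{H}^{2s}(\Omega)$), and note that the candidate $w_v := \sum_{k=1}^\infty \lambda_k^{s-1} v_k \varphi_k$ formally solves $(-\Delta_{D,0})^{1-s}w_v = v$. A direct computation gives $\|w_v\|_{\mathbb{H}^2(\Omega)}^2 = \sum_k \lambda_k^{2s}v_k^2 = \|v\|_{\mathbb{H}^{2s}(\Omega)}^2 < \infty$, so $w_v \in \mathbb{H}^2(\Omega)$. Because $\Omega$ is quasi-convex, the identification $\mathbb{H}^2(\Omega) = \operatorname{dom}(-\Delta_{D,0}) \subset H^2(\Omega) \cap H^1_0(\Omega)$ holds, so $w_v \in H^2(\Omega)\cap H^1_0(\Omega)$ and the partial sums $w_v^N := \sum_{k=1}^N \lambda_k^{s-1}v_k\varphi_k$ converge to $w_v$ in $H^2(\Omega)\cap H^1_0(\Omega)$.

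Second, I would use Lemma~\ref{l:Nspace} to deduce that $\partial_\nu : H^1_0(\Omega)\cap H^2(\Omega) \to N^{1/2}(\partial\Omega)$ is bounded, hence
\[
 \partial_\nu w_v^N = \sum_{k=1}^N \lambda_k^{s-1}v_k \,\partial_\nu\varphi_k \longrightarrow \partial_\nu w_v \quad \text{in } N^{1/2}(\partial\Omega).
\]
Since $u|_{\partial\Omega} \in N^{1/2}(\partial\Omega)^*$, the boundary pairing $\int_{\partial\Omega} u\,\partial_\nu w_v$ is well defined and the sequence $\int_{\partial\Omega} u\,\partial_\nu w_v^N$ converges to it. On the other hand, from $u \in \mathbb{D}^{2s}(\Omega)$ we have $(-\Delta_D)^s u \in L^2(\Omega)$ with Fourier coefficients $\lambda_k^s u_{\Omega,k} + \lambda_k^{s-1} u_{\partial\Omega,k}$, and Parseval's identity against $v$ yields
\[
 \int_\Omega (-\Delta_D)^s u \, v = \sum_{k=1}^\infty \lambda_k^s u_{\Omega,k} v_k + \sum_{k=1}^\infty \lambda_k^{s-1} u_{\partial\Omega,k} v_k.
\]
Both series converge absolutely by Cauchy--Schwarz and the membership assumptions (using $\sum_k \lambda_k^{2s}v_k^2<\infty$ together with the $\mathbb{D}^{2s}$-condition on $u$).

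Third, I would identify each piece: the first series equals $\int_\Omega u\,(-\Delta_{D,0})^s v$ because $(-\Delta_{D,0})^s v = \sum_k \lambda_k^s v_k \varphi_k \in L^2(\Omega)$ and $u_{\Omega,k} = \int_\Omega u\varphi_k$. For the second series, I rewrite the partial sum as the duality pairing
\[
 \sum_{k=1}^N \lambda_k^{s-1} v_k u_{\partial\Omega,k} = \Big\langle u,\, \sum_{k=1}^N \lambda_k^{s-1} v_k \,\partial_\nu \varphi_k \Big\rangle_{N^{1/2}(\partial\Omega)^*,\,N^{1/2}(\partial\Omega)} = \langle u, \partial_\nu w_v^N\rangle,
\]
which passes to the limit thanks to the $N^{1/2}(\partial\Omega)$-convergence established above, yielding $\int_{\partial\Omega} u\,\partial_\nu w_v$. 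Adding the two contributions gives the claimed identity.

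The main obstacle is the justification of the boundary term: one needs termwise convergence of $\partial_\nu w_v^N$ in a space dual to where the trace of $u$ lives. This hinges on quasi-convexity of $\Omega$ (to upgrade the regularity $\mathbb{H}^2(\Omega) \hookrightarrow H^2(\Omega)\cap H^1_0(\Omega)$) and on Lemma~\ref{l:Nspace} (to get the bounded Neumann trace into $N^{1/2}(\partial\Omega)$). Once these are in place, the interchange of summation and boundary pairing is immediate from the continuity of the duality bracket.
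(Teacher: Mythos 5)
Your proposal is correct and follows essentially the same route as the paper's proof: expand in the Dirichlet eigenbasis, identify $w_v=\sum_k\lambda_k^{s-1}v_k\varphi_k\in\mathbb{H}^2(\Omega)$, use quasi-convexity to upgrade to $H^2(\Omega)\cap H^1_0(\Omega)$ convergence of the partial sums, and then pass to the limit in the boundary term via Lemma~\ref{l:Nspace} and the $N^{1/2}(\partial\Omega)^*$--$N^{1/2}(\partial\Omega)$ pairing (the paper additionally cites the compatibility of this pairing with the $L^2(\partial\Omega)$ integral, which your argument implicitly uses). The only cosmetic slip is the claim that the second series converges \emph{absolutely}, which is neither needed nor directly available from the $\mathbb{D}^{2s}$ hypothesis; your actual argument via convergence of the duality pairings is what the paper uses and suffices.
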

\begin{proof}
Let us define
\[
v_l:=\sum_{k=1}^l\left(\lambda_k^{s} \int_\Omega u \varphi_k 
+ \lambda_k^{s-1} \int_{\partial\Omega} u \partial_\nu \varphi_k \right) 
\varphi_k.
\]
Using the orthogonality of the eigenfunctions $\varphi_k$, we obtain for any $l,m\in \mathbb{N}$ with $l\geq m$
\begin{align*}
	\|v_l-v_m\|_{L^2(\Omega)}^2&=\sum_{m+1}^l \left(\lambda_k^{s} \int_\Omega u \varphi_k 
	+ \lambda_k^{s-1} \int_{\partial\Omega} u \partial_\nu \varphi_k \right)^2\\
	&=\sum_{m+1}^l \lambda_k^{2s}\left(\int_\Omega u \varphi_k 
	+ \lambda_k^{-1} \int_{\partial\Omega} u \partial_\nu \varphi_k \right)^2\leq \|u\|_{\mathbb{D}^{2s}(\Omega)}^2,
\end{align*}
where the last term is bounded independent of $l$ and $m$ since $u\in \mathbb{D}^{2s}(\Omega)$. As a consequence, by means of the Cauchy criterion, the limit $(-\Delta_D)^{s} u:= \lim_{l\rightarrow\infty}v_l$ exists in $L^2(\Omega)$. In the same manner, we obtain for $v\in\mathbb{H}^{2s}(\Omega)$ that the limit
\[
	(-\Delta_{D,0})^sv:=\lim_{l\rightarrow\infty} \sum_{k=1}^l\lambda_k^s\left(\int_{\Omega}v\varphi_k\right)\varphi_k
\]
exits is $L^2(\Omega)$. Moreover, if we set \[w_{v,k}:=\lambda_k^{s-1}\left(\int_{\Omega}v \varphi_k\right),\]
we can conclude that the solution $w_v$ to \eqref{eq:wv} is given by
\[
	w_v=\lim_{l\rightarrow\infty}\sum_{k=1}^lw_{v,k}\varphi_k=\lim_{l\rightarrow\infty}\sum_{k=1}^l\lambda_{k}^{s-1}\left(\int_\Omega v\varphi_k\right)\varphi_k.
\]
With similar arguments as above, it is straightforward to verify that $w_v\in \mathbb{H}^2(\Omega)$, $\Delta w_v \in L^2(\Omega)$ and
\begin{equation}\label{eq:w_v1}
	\|\Delta w_v\|_{L^2(\Omega)}=\|w_v\|_{\mathbb{H}^2(\Omega)}
\end{equation}
since $v\in\mathbb{H}^{2s}(\Omega)$. 
Combining the last observations yields
\begin{align*}
\int_{\Omega} (-\Delta_D)^suv&=\lim_{l\rightarrow\infty}\sum_{k=1}^l \left(\lambda_k^{s} \int_\Omega u \varphi_k + \lambda_k^{s-1} \int_{\partial\Omega} u \partial_\nu \varphi_k \right) 
\left(\int_{\Omega} v\varphi_k\right)\\
&=\lim_{l\rightarrow\infty}\int_\Omega u \sum_{k=1}^l\lambda_k^{s}\left(\int_\Omega v\varphi_k\right)\varphi_k + \lim_{l\rightarrow\infty} \int_{\partial\Omega} u \partial_\nu \sum_{k=1}^l\lambda_k^{s-1} \left(\int_{\Omega} v\varphi_k\right)\varphi_k\\
&=\int_\Omega u (-\Delta_{D,0})^sv+\int_{\partial\Omega}u \partial_\nu w_v+\lim_{l\rightarrow\infty} \int_{\partial\Omega} u \partial_\nu \left(\sum_{k=1}^lw_{v,k}\varphi_k-w_v\right).
\end{align*}
The proof is complete, once we have shown that
\[
	\lim_{l\rightarrow\infty} \int_{\partial\Omega} u \partial_\nu \left(\sum_{k=1}^lw_{v,k}\varphi_k-w_v\right)=0.
\]
To this end, we notice that in quasi-convex domains, for $\Delta w_v \in L^2(\Omega)$, the solution $w_v$ of \eqref{eq:wv} belongs to $H^2(\Omega)\cap H^1_0(\Omega)$ and fulfills
\begin{equation}\label{eq:w_v2}
	\|w_v\|_{H^2(\Omega)}\leq c \|\Delta w_v\|_{L^2(\Omega)},
\end{equation}
see e.g. \cite[Theorem~10.4]{MR2788354}. According to \cite[Corollary 6.5]{MR2788354}, the duality pairing between $N^{1/2}(\partial\Omega)$ and $N^{1/2}(\partial\Omega)^*$ is compatible with the natural integral pairing in $L^2(\partial\Omega)$. Consequently, using Lemma \ref{l:Nspace}, \eqref{eq:w_v2} and \eqref{eq:w_v1}, we obtain
\begin{align*}
	\left|\int_{\partial\Omega} u \partial_\nu \left(\sum_{k=1}^lw_{v,k}\varphi_k-w_v\right)\right|&\leq \|u\|_{N^{1/2}(\partial\Omega)^*}\|\partial_\nu \left(\sum_{k=1}^lw_{v,k}\varphi_k-w_v\right)\|_{N^{1/2}(\partial\Omega)}\\
	&\leq c\|u\|_{N^{1/2}(\partial\Omega)^*}\|\sum_{k=1}^lw_{v,k}\varphi_k-w_v\|_{\mathbb{H}^2(\Omega)}.
\end{align*}
Thus, the assertion is proved since $\lim_{l\to\infty}\|\sum_{k=1}^lw_{v,k}\varphi_k-w_v\|_{\mathbb{H}^2(\Omega)}=0$ as shown above.
\end{proof}

\begin{remark}\begin{enumerate}
	\item If in Theorem~\ref{theorem:Dint} we let $u\in H^{1/2}(\partial\Omega)$ then we do not need to assume quasi-convexity, bounded Lipschitz domains are sufficient. Following e.g. \cite{MR2788354}, it may even be possible to further relax the regularity requirements for $u$ on the boundary in this case.
	\item If in Theorem~\ref{theorem:Dint} we let $u\in \mathbb{D}^2(\Omega)$ then $\Delta u \in L^2(\Omega)$. As a consequence, according to \cite[Theorem 6.4]{MR2788354}, we obtain $u\in N^{1/2}(\partial\Omega)^*$.
	\end{enumerate}
\end{remark}

We continue by introducing the space $N^{s}(\partial\Omega)$ with $s\in[1,\frac32]$,
\[
N^{s}(\partial\Omega) := \{ g \in H^1(\partial\Omega) \;:\; \nabla_{tan}g\in 
(H^{s-1}(\partial\Omega))^n\}. 
\]
This space can be endowed with the norm
\[
\|g\|_{N^{s}(\partial\Omega)} = \|g 
\|_{L^2(\partial\Omega)}+\|\nabla_{tan}g\|_{(H^{s-1}(\partial\Omega))^n}.
\]
Here $\nabla_{tan} g = \left(\sum_{k=1}^n \nu_k  \frac{\partial g}{\partial \tau_{k,j}}\right)_{1\le j \le n}$ with $\frac{\partial}{\partial \tau_{k,j}} = \nu_k \frac{\partial}{\partial x_j} - \nu_j \frac{\partial}{\partial x_k}$.

Similarly to the explanations above, we obtain by the fact that
	\[
	\|g\|_{H^{1}(\partial\Omega)}\sim \|g\|_{N^1(\partial\Omega)}\quad\forall g\in H^1(\partial\Omega),
	\]
	the following characterization of the intermediate spaces
	\begin{equation}\label{eq:equivalence2}
	N^{s}(\partial\Omega)=[{N^{3/2}(\partial\Omega)},{H^{1}(\partial\Omega)}]_{3-2s},
	\end{equation}
	which is due to classical results of real interpolation.
	
	For $s=\frac32$ we have the following result for the Dirichlet trace operator.

\begin{lemma}\label{l:NspaceD}
	Let $n \ge 2$ and $\Omega$ be a bounded Lipschitz domain. Then the Dirichlet trace 
	operator $\gamma_D$
	\[
	\gamma_D : \{v\in H^2(\Omega) \;:\;\partial_\nu u =0 \text{ a.e. on }\partial\Omega\} \rightarrow 
	N^{3/2}(\partial\Omega)
	\]
	is well-defined, linear, bounded, onto, and with a linear, bounded right 
	inverse. 
	Additionally, the null space of $\gamma_D$ is $H^2_0(\Omega)$, the closure 
	of 
	$C_0^\infty(\Omega)$ in $H^2(\Omega)$. 
\end{lemma}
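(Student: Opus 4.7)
The plan is to mirror the Neumann trace lemma (Lemma~\ref{l:Nspace}) and reduce surjectivity to it via a lifting-and-correction argument. The proof has four ingredients: (i) well-definedness and boundedness; (ii) linearity (trivial); (iii) characterization of the null space as $H^2_0(\Omega)$; (iv) construction of a bounded linear right inverse. I expect (iv) to be the main obstacle.

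For (i), the key identity to exploit is the orthogonal decomposition of the full gradient trace on a Lipschitz boundary,
\[
\gamma_0(\nabla v) \;=\; \nabla_{\tan}(\gamma_D v) \;+\; (\partial_\nu v)\,\nu,
\]
valid a.e.\ on $\partial\Omega$. Under the assumption $\partial_\nu v = 0$ this collapses to $\nabla_{\tan}(\gamma_D v)=\gamma_0(\nabla v)$, and since $\nabla v\in H^1(\Omega)^n$ its trace belongs to $H^{1/2}(\partial\Omega)^n$ with norm bounded by $\|v\|_{H^2(\Omega)}$. Because $\gamma_D v\in H^{3/2}(\partial\Omega)\subset H^1(\partial\Omega)$ by standard trace theory for $H^2(\Omega)$, the definition of $N^{3/2}(\partial\Omega)$ yields $\gamma_D v\in N^{3/2}(\partial\Omega)$ with $\|\gamma_D v\|_{N^{3/2}(\partial\Omega)}\le C\|v\|_{H^2(\Omega)}$.

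For (iii), if both $\gamma_D v=0$ and $\partial_\nu v=0$ then $v\in H^2_0(\Omega)$ by the standard two-trace characterization of $H^2_0(\Omega)$ on bounded Lipschitz domains (the same fact underlying the null-space claim in Lemma~\ref{l:Nspace}), and conversely every element of $H^2_0(\Omega)$ clearly lies in the indicated domain with zero Dirichlet trace.

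The core step (iv) goes as follows. Given $g\in N^{3/2}(\partial\Omega)$, I first use the characterization from Mitrea--Mitrea (the companion to Lemma~6.3 of \cite{MR2788354} identifying $N^{3/2}(\partial\Omega)$ as exactly the Dirichlet trace space of $H^2(\Omega)$ on bounded Lipschitz domains) to obtain some $\tilde v\in H^2(\Omega)$ with $\gamma_D\tilde v=g$ and $\|\tilde v\|_{H^2(\Omega)}\le C\|g\|_{N^{3/2}(\partial\Omega)}$. Applying the decomposition identity to $\tilde v$ gives
\[
(\partial_\nu\tilde v)\,\nu \;=\; \gamma_0(\nabla\tilde v) - \nabla_{\tan}g \;\in\; H^{1/2}(\partial\Omega)^n,
\]
which is precisely the statement $\partial_\nu\tilde v\in N^{1/2}(\partial\Omega)$, with $\|\partial_\nu\tilde v\|_{N^{1/2}(\partial\Omega)}\le C\|g\|_{N^{3/2}(\partial\Omega)}$. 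Now Lemma~\ref{l:Nspace} provides $w\in H^1_0(\Omega)\cap H^2(\Omega)$ with $\partial_\nu w=-\partial_\nu\tilde v$ and $\|w\|_{H^2(\Omega)}\le C\|\partial_\nu\tilde v\|_{N^{1/2}(\partial\Omega)}$. Setting $v:=\tilde v+w$ yields $\gamma_D v=g$, $\partial_\nu v=0$, and $\|v\|_{H^2(\Omega)}\le C\|g\|_{N^{3/2}(\partial\Omega)}$; the map $g\mapsto v$ is linear and bounded by construction, which is the sought right inverse and, in particular, gives surjectivity.

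The hard part is ensuring the two trace-space identifications on which this argument rests, namely the existence of the bounded $H^2$-extension of an $N^{3/2}(\partial\Omega)$ datum and the validity of the tangential/normal decomposition of $\gamma_0(\nabla\tilde v)$ for merely Lipschitz $\partial\Omega$; both are provided by the Mitrea--Mitrea framework \cite{MR2788354}, whose function-space scale $N^{s}(\partial\Omega)$ was introduced precisely to make these identifications hold on general Lipschitz domains.
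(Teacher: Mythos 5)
The paper does not actually prove this lemma: its ``proof'' is a one-line citation to Lemma~6.9 of \cite{MR2788354}, so your attempt necessarily goes beyond what the paper records. Your parts (i)--(iii) are essentially sound. The identity $\nabla_{\tan}(\gamma_D v)=\gamma_0(\nabla v)-(\partial_\nu v)\,\nu$ does hold a.e.\ on a Lipschitz boundary with the paper's definition of $\nabla_{\tan}$, and under $\partial_\nu v=0$ it yields boundedness into $N^{3/2}(\partial\Omega)$. One caveat: on a merely Lipschitz boundary you cannot invoke ``$\gamma_D v\in H^{3/2}(\partial\Omega)$ by standard trace theory'' --- that is false in general and is precisely why the $N^s$ scale exists; the statement you actually need, $\gamma_D v\in H^1(\partial\Omega)$, follows instead from the same gradient-trace identity.

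The genuine gap is in step (iv). Your construction rests on a ``companion characterization identifying $N^{3/2}(\partial\Omega)$ as exactly the Dirichlet trace space of $H^2(\Omega)$.'' No such identification holds on Lipschitz domains. For general $v\in H^2(\Omega)$ the term $(\partial_\nu v)\,\nu$ is only in $L^2(\partial\Omega)^n$ (the normal is merely bounded measurable), so the decomposition gives only $\gamma_D(H^2(\Omega))\subseteq H^1(\partial\Omega)$, and the inclusion $N^{3/2}(\partial\Omega)\subseteq\gamma_D(H^2(\Omega))$ is strict in general: on the unit square, $v=x_1$ lies in $H^2(\Omega)$ but $\nabla_{\tan}(\gamma_D v)=e_1-\nu_1\nu$ is piecewise constant with jumps across the corners, hence not in $H^{1/2}(\partial\Omega)^2$, so $\gamma_D v\notin N^{3/2}(\partial\Omega)$. (Consistently, the paper treats $\gamma_D(H^2(\Omega))$ as a separate space in Lemma~\ref{l:regv} and Remark~\ref{re:trace}, identifying it with $H^{3/2}(\partial\Omega)$ only for $C^{1,r}$ domains.) The direction you actually need --- a bounded linear $E:N^{3/2}(\partial\Omega)\to H^2(\Omega)$ with $\gamma_D Eg=g$ --- is precisely the surjectivity-with-right-inverse content of the lemma being proven, weakened only by dropping the Neumann constraint; invoking it makes the argument circular. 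What you have genuinely contributed is the correction step: granted such an $E$, observing $\partial_\nu\tilde v\in N^{1/2}(\partial\Omega)$ via the decomposition and then killing the Neumann trace with Lemma~\ref{l:Nspace} is correct and cleanly reduces the constrained extension problem to the unconstrained one. But the unconstrained extension is the hard part; in \cite{MR2788354} it is obtained by an explicit Whitney-array-type construction, not from an a priori trace-space identification, and your proof cannot close without importing it.
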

\begin{proof}
	See Lemma 6.9 of \cite{MR2788354}.
\end{proof}

\begin{remark}[Relation between $N^{s}(\partial\Omega)$ and $H^{s}(\partial\Omega)$ for $s\in {[1,\frac32]}$]
\label{r:N32space}
If $\Omega$ is of class $C^{1,r}$ with $r > 1/2$ then $N^{s}(\partial\Omega) = H^{s}(\partial\Omega)$ for $s\in[1,\frac32]$, see \cite[Lemma 6.8]{MR2788354}. 
\end{remark}

As for the Dirichlet fractional Laplacian, we are able to state an integration-by-parts type formula which relates $(-\Delta_N)^s$ to $(-\Delta_{N,0})^s$.
	\begin{theorem}[Neumann: integration-by-parts formula]
	\label{theorem:Nint}
		Let $\Omega$ be a bounded quasi-convex domain. Moreover, let $u \in \mathbb{N}^{2s}(\Omega)$ with $\partial_\nu u\in N^{3/2}(\partial\Omega)^*$ and $v \in H^{2s}_{\int}(\Omega)$. Then the following integration-by-parts formula holds
		\begin{align*}
		\int_{\Omega}(-\Delta_N)^suv&=
		\int_\Omega u (-\Delta_{N,0})^sv-\int_{\partial\Omega}\partial_\nu u w_v,
		\end{align*}	
		where $w_v \in H^{2}_{\int}(\Omega)$ is defined as the solution to
		\begin{align}\notag
		(-\Delta_{N,0})^{1-s}w_v=v\quad \text{in } \Omega,\quad \partial_{\nu}w_v=0\quad\text{on 
		}\partial \Omega.
		\end{align}
\end{theorem}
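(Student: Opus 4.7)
The plan is to mirror the proof of Theorem~\ref{theorem:Dint} closely, since every ingredient has a direct Neumann analogue. First I would form the partial sums
\[
V_l := \sum_{k=2}^l \left(\mu_k^s \int_\Omega u\,\psi_k \;-\; \mu_k^{s-1}\int_{\partial\Omega}\partial_\nu u\,\psi_k\right)\psi_k
\]
and use $u\in\mathbb{N}^{2s}(\Omega)$ together with the $L^2$-orthonormality of the $\psi_k$ to show that $\{V_l\}$ is Cauchy in $L^2(\Omega)$, so that $(-\Delta_N)^s u$ is a well-defined element of $L^2(\Omega)$ up to the constant $-|\Omega|^{-1}\int_{\partial\Omega}\partial_\nu u$. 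Because $v\in H^{2s}_{\int}(\Omega)$ has zero mean, that constant drops out when integrated against $v$; expanding $v=\sum_{k\ge 2}v_k\psi_k$ and invoking orthonormality yields
\[
\int_\Omega (-\Delta_N)^s u\cdot v \;=\; \sum_{k=2}^\infty \mu_k^s u_{\Omega,k} v_k \;-\; \sum_{k=2}^\infty \mu_k^{s-1} u_{\partial\Omega,k} v_k.
\]

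For the first sum, the partial sums of $(-\Delta_{N,0})^s v=\sum_{k\ge 2}\mu_k^s v_k\psi_k$ converge in $L^2(\Omega)$ because $v\in H^{2s}_{\int}(\Omega)$, and Parseval identifies the sum with $\int_\Omega u\,(-\Delta_{N,0})^s v$. For the second sum I would introduce $w_v:=\sum_{k=2}^\infty\mu_k^{s-1}v_k\psi_k$, observe via a Cauchy argument in the $H^{2}_{\int}(\Omega)$-norm that the series converges with $\|w_v\|_{H^{2}_{\int}(\Omega)}=\|v\|_{H^{2s}_{\int}(\Omega)}$, and note that $(-\Delta_{N,0})^{1-s}w_v=v$ is immediate from the expansion. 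A termwise differentiation gives $-\Delta w_v=(-\Delta_{N,0})^s v\in L^2(\Omega)$, while each partial sum manifestly has vanishing Neumann trace; by the Neumann $H^2$-regularity theorem for quasi-convex domains (the Neumann analogue of \cite[Theorem~10.4]{MR2788354}) we conclude $w_v\in H^2(\Omega)\cap H^{2}_{\int}(\Omega)$ with $\|w_v\|_{H^2(\Omega)}\le c\|\Delta w_v\|_{L^2(\Omega)}$ and $\partial_\nu w_v=0$.

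The remaining and most delicate step is to pass to the limit in the boundary pairing. The partial sums $W_l:=\sum_{k=2}^l\mu_k^{s-1}v_k\psi_k$ lie in $\{w\in H^2(\Omega):\partial_\nu w=0\}$ and converge to $w_v$ in $H^2(\Omega)$; by Lemma~\ref{l:NspaceD} their Dirichlet traces therefore converge to $w_v|_{\partial\Omega}$ in $N^{3/2}(\partial\Omega)$. Combining this with $\partial_\nu u\in N^{3/2}(\partial\Omega)^*$ and the compatibility of the $N^{3/2}(\partial\Omega)$--$N^{3/2}(\partial\Omega)^*$ duality with the natural integral pairing on $L^2(\partial\Omega)$ (the Neumann counterpart of \cite[Corollary~6.5]{MR2788354}), one obtains
\[
\sum_{k=2}^\infty \mu_k^{s-1} u_{\partial\Omega,k} v_k \;=\; \lim_{l\to\infty}\int_{\partial\Omega}\partial_\nu u\cdot W_l \;=\; \int_{\partial\Omega}\partial_\nu u\cdot w_v,
\]
which together with the first identification gives the claimed formula. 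The main obstacle is precisely this final limit passage: it hinges on both the quasi-convex Neumann $H^2$-regularity of $w_v$ and the trace/duality framework provided by Lemma~\ref{l:NspaceD}, since $\partial_\nu u$ is only controlled in the dual of $N^{3/2}(\partial\Omega)$ rather than in $L^2(\partial\Omega)$.
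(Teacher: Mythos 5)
Your proposal is correct and follows essentially the same route as the paper: the paper's proof also reduces everything to the limit passage $\lim_{l\to\infty}\int_{\partial\Omega}\partial_\nu u\,(W_l-w_v)=0$, handled via the Neumann $H^2$-regularity result for quasi-convex domains (\cite[Theorem~10.8]{MR2788354}), Lemma~\ref{l:NspaceD}, and the compatibility of the $N^{3/2}(\partial\Omega)$--$N^{3/2}(\partial\Omega)^*$ duality with the $L^2(\partial\Omega)$ pairing (\cite[Corollary~6.12]{MR2788354}). You correctly identified this boundary limit as the one genuinely new ingredient relative to the Dirichlet case.
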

\begin{proof}
The proof is almost a word-by-word repetition of the proof of Theorem \ref{theorem:Dint}. In contrast it is crucial to show that
\[
\lim_{l\rightarrow\infty} \int_{\partial\Omega} \partial_\nu u \left(\sum_{k=2}^lw_{v,k}\psi_k-w_v\right)=0
\]
with $w_{v,k}:=\mu_k^{s-1}\int_{\Omega}v\psi_k$. It is again straightforward to verify that $w_v\in H^2_{\int}(\Omega)$, $\Delta w_v \in L^2(\Omega)$ and $\|\Delta w_v\|_{L^2(\Omega)}=\|w_v\|_{H^2_{\int}(\Omega)}$. As a consequence, according to \cite[Theorem 10.8]{MR2788354}, the solution $w_v$ belongs to $\{v\in H^2(\Omega) :\partial_\nu u =0 \text{ a.e. on }\partial\Omega\}$ and fulfills
\[
	\|w_v\|_{H^2(\Omega)}\leq c\|\Delta w_v\|_{L^2(\Omega)}.
\]
According to \cite[Corollary 6.12]{MR2788354}, the duality pairing between $N^{3/2}(\partial\Omega)$ and $N^{3/2}(\partial\Omega)^*$ is compatible with the natural integral pairing in $L^2(\partial\Omega)$. Using this in combination with Lemma \ref{l:NspaceD} and the foregoing results, we obtain
\begin{align*}
\left|\int_{\partial\Omega} \partial_\nu u \left(\sum_{k=2}^lw_{v,k}\psi_k-w_v\right)\right|&\leq \|\partial_\nu u\|_{N^{3/2}(\partial\Omega)^*}\|\sum_{k=2}^lw_{v,k}\psi_k-w_v\|_{N^{3/2}(\partial\Omega)}\\
&\leq c\|\partial_\nu u\|_{N^{3/2}(\partial\Omega)^*}\|\sum_{k=2}^lw_{v,k}\psi_k-w_v\|_{{H}^2_{\int}(\Omega)}.
\end{align*}
Again, the assertion is proved since $\lim_{l\to\infty}\|\sum_{k=2}^lw_{v,k}\psi_k-w_v\|_{{H}^2_{\int}(\Omega)}=0$ as shown above.
\end{proof}
\begin{remark}\begin{enumerate}
		\item If in Theorem~\ref{theorem:Nint} we let $\partial_\nu u\in H^{-1/2}(\partial\Omega)$ then we do not need to assume quasi-convexity , bounded Lipschitz domains are again sufficient.
		\item If in Theorem~\ref{theorem:Nint} we let $u\in \mathbb{N}^2(\Omega)$ and $\partial_{\nu}u$ Lebesgue measurable then $\Delta u \in L^2(\Omega)$. Consequently, using \cite[Theorem 6.10]{MR2788354}, we obtain $\partial_\nu u\in N^{3/2}(\partial\Omega)^*$.
	\end{enumerate}
\end{remark}

\section{Application I: Fractional Equation with Dirichlet Boundary Condition}\label{s:app}

We next apply our definition in \eqref{eq:fLap_g} to \eqref{eq:fracLap}. In 
order to impose the boundary condition $u = g$ on $\partial\Omega$, we use 
the standard lifting argument, i.e., given $g \in 
\mathbb{D}^{s-\frac{1}{2}}(\partial\Omega)$
with
\[
\mathbb{D}^{s-\frac{1}{2}}(\partial\Omega):=\begin{cases} N^{\frac12-s}(\partial\Omega)^*&\text{for }s\in[0,\frac12)\\
L^2(\partial\Omega)&\text{for }s=\frac12\\
H^{s-\frac12}(\partial\Omega)&\text{for }s\in(\frac12,1]
\end{cases},
\]
we construct $v \in \mathbb{D}^s(\Omega)$ solving 
\begin{align}\label{eq:v}
\begin{aligned}
(-\Delta_D)^s v &= 0  \quad \mbox{in } \Omega, \\
v &= g  \quad \mbox{on } \partial\Omega ,
\end{aligned}
\end{align}
and given $f \in \mathbb{H}^{-s}(\Omega)$, $w \in \mathbb{H}^s(\Omega)$ solves
\begin{align}\label{eq:w}
\begin{aligned}
 (-\Delta_D)^s w &= f  \quad \mbox{in } \Omega     ,    \\
             w &= 0  \quad \mbox{on } \partial\Omega ,
\end{aligned}
\end{align}
then $u = w+v$. 
Notice that in \eqref{eq:w} $(-\Delta_D)^s = (-\Delta_{D,0})^s$ by Proposition \ref{p:presvD} and density. Study of 
\eqref{eq:w} has been the focal point of 
several recent works \cite{LCaffarelli_LSilvestre_2007a, 
PRStinga_JLTorrea_2010a,CBrandle_EColorado_APablo_USSanchez_2013a,XCabre_JTan_2010a,
 ACapella_JDavila_LDupaigne_YSire_2011a} and can be realized by using the 
Caffarelli-Silvestre extension or the Stinga-Torrea extension \cite{LCaffarelli_LSilvestre_2007a, 
PRStinga_JLTorrea_2010a}, see for instance 
\cite{RHNochetto_EOtarola_AJSalgado_2014a}. 

On the other hand, at the first glance, \eqref{eq:v} seems as complicated as 
the original problem \eqref{eq:fracLap}. However, we will show that 
\eqref{eq:v} is equivalent to solving a standard Laplace problem with nonzero 
boundary conditions  
\begin{equation}\label{eq:v_ver}
 -\Delta v = 0 \quad \mbox{in } \Omega, \quad v = g \quad \mbox{on } \partial\Omega, 
\end{equation}
in the so-called very-weak form \cite{LionsMagenes1968, 
	MR2084239,CasasRaymond2006,MR3070527,MR3432846,NUM:NUM22057,apelnicaisepfefferer:2015-2extended}
	 or in the classical 
weak form if the regularity of the boundary datum guarantees its 
well-posedness. 

We start with introducing the very weak form of \eqref{eq:v_ver}. Given $g\in 
N^{\frac{1}{2}}(\partial\Omega)^*$, we are seeking a function $v\in 
L^2(\Omega)$ fulfilling 
\begin{equation}\label{eq:v_veryweak}
	\int_{\Omega}v(-\Delta)\varphi=-\int_{\partial\Omega}g\partial_\nu 
	\varphi 
	\quad
	\forall \varphi\in V:=H^1_0(\Omega)\cap H^2(\Omega).
\end{equation}
Next, we show existence and regularity results for the very 
weak solution of \eqref{eq:v_ver}.

\begin{lemma}
\label{l:v_exist}
    Let $\Omega$ be a bounded, quasi-convex domain. 
	For any $g\in N^{\frac12}(\partial\Omega)^*$,
	there exists an unique very weak 
	solution $v\in L^2(\Omega)$ of \eqref{eq:v_ver}. For more regular boundary 
	data $g\in \mathbb{D}^{s-\frac{1}{2}}(\partial\Omega)$ with $s\in [0,1]$, the 
	solution 
	belongs to $H^s(\Omega)$ and admits the a priori estimate
	\begin{equation}\label{eq:apriorivwD}
		\|v\|_{H^s(\Omega)}\leq c 
		\|g\|_{\mathbb{D}^{s-\frac{1}{2}}(\partial\Omega)}.
	\end{equation}
	Moreover, if s=1 then the very weak solution is actually a weak solution.
\end{lemma}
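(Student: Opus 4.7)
The plan is to combine Lions--Magenes transposition at the rougher endpoint with classical weak theory at the smoother endpoint, and then interpolate. First, for the base case $g\in N^{1/2}(\partial\Omega)^*$, I use quasi-convexity to invoke Theorem~10.4 of \cite{MR2788354}: the operator $T\varphi:=-\Delta\varphi$ is a topological isomorphism of $V=H^1_0(\Omega)\cap H^2(\Omega)$ onto $L^2(\Omega)$ with $\|\varphi\|_{H^2(\Omega)}\le c\|\Delta\varphi\|_{L^2(\Omega)}$. Combined with Lemma~\ref{l:Nspace}, the linear functional
\[
F(\psi):=-\int_{\partial\Omega}g\,\partial_\nu T^{-1}\psi
\]
is bounded on $L^2(\Omega)$ with $\|F\|_{L^2(\Omega)^*}\le c\|g\|_{N^{1/2}(\partial\Omega)^*}$. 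Riesz representation yields a unique $v\in L^2(\Omega)$ with $\int_\Omega v\psi=F(\psi)$; choosing $\psi=-\Delta\varphi$ for $\varphi\in V$ recovers \eqref{eq:v_veryweak}, so $v$ is the unique very weak solution of \eqref{eq:v_ver} and \eqref{eq:apriorivwD} holds for $s=0$.

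Next, for $g\in H^{1/2}(\partial\Omega)$ I lift $g$ to some $G\in H^1(\Omega)$ and solve the homogeneous Dirichlet problem for $v-G$ by Lax--Milgram, obtaining a weak solution $v\in H^1(\Omega)$ with $\|v\|_{H^1(\Omega)}\le c\|g\|_{H^{1/2}(\partial\Omega)}$. Green's formula with test functions in $V$ (which vanish on $\partial\Omega$) shows that this weak solution also satisfies \eqref{eq:v_veryweak}, so by uniqueness it coincides with the very weak one; this simultaneously yields \eqref{eq:apriorivwD} for $s=1$ and the final claim of the lemma. For intermediate $s\in(0,1)$, the solution operator $S:g\mapsto v$ is thus bounded both as $N^{1/2}(\partial\Omega)^*\to L^2(\Omega)$ and as $H^{1/2}(\partial\Omega)\to H^1(\Omega)$, so real interpolation gives
\[
S:\bigl[N^{1/2}(\partial\Omega)^*,H^{1/2}(\partial\Omega)\bigr]_{s}\to\bigl[L^2(\Omega),H^1(\Omega)\bigr]_{s}=H^s(\Omega).
\]
To conclude, I identify the source interpolation space with $\mathbb{D}^{s-1/2}(\partial\Omega)$; using $L^2(\partial\Omega)$ as intermediate level, the reiteration theorem reduces this to $[N^{1/2}(\partial\Omega)^*,L^2(\partial\Omega)]_{2s}=N^{\frac12-s}(\partial\Omega)^*$ for $s\in[0,\tfrac12]$ (the dual of \eqref{eq:equivalence1}) and $[L^2(\partial\Omega),H^{1/2}(\partial\Omega)]_{2s-1}=H^{s-\frac12}(\partial\Omega)$ for $s\in[\tfrac12,1]$, both of which are classical.

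The main technical obstacle I anticipate is precisely this last identification across the threshold $s=\tfrac12$, since $\mathbb{D}^{s-1/2}(\partial\Omega)$ is defined piecewise and $N^t(\partial\Omega)$ need not coincide with $H^t(\partial\Omega)$ on a merely Lipschitz boundary (cf.\ Remark~\ref{r:Nspace}). However, \eqref{eq:equivalence1} together with duality and reiteration for real interpolation deliver exactly what is needed within the quasi-convex Lipschitz class, so no additional smoothness of $\partial\Omega$ has to be assumed.
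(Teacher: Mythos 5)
Your transposition argument for the base case $g\in N^{1/2}(\partial\Omega)^*$ (isomorphism $-\Delta\colon V\to L^2(\Omega)$ from quasi-convexity plus Lemma~\ref{l:Nspace}, then Riesz representation) is a cosmetic variant of the paper's Babu\v{s}ka--Lax--Milgram argument, and your treatment of $s=1$ (Lax--Milgram for the weak problem, Green's formula to show the weak solution is the very weak one, uniqueness to identify them) matches the paper. The gap is in the interpolation step. You interpolate only between the two extreme endpoints $N^{1/2}(\partial\Omega)^*\to L^2(\Omega)$ and $H^{1/2}(\partial\Omega)\to H^1(\Omega)$, and then try to identify $\bigl[N^{1/2}(\partial\Omega)^*,H^{1/2}(\partial\Omega)\bigr]_s$ with $\mathbb{D}^{s-1/2}(\partial\Omega)$ by reiteration through $L^2(\partial\Omega)$. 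Reiteration requires that $L^2(\partial\Omega)$ be an intermediate space of class $1/2$ for the couple $\bigl(N^{1/2}(\partial\Omega)^*,H^{1/2}(\partial\Omega)\bigr)$, e.g.\ that $L^2(\partial\Omega)=\bigl[N^{1/2}(\partial\Omega)^*,H^{1/2}(\partial\Omega)\bigr]_{1/2,q}$. Identity \eqref{eq:equivalence1} does not give this: it describes interpolation inside the couple $\bigl(N^{1/2}(\partial\Omega),L^2(\partial\Omega)\bigr)$, not the mixed couple you need, and on a merely Lipschitz boundary $N^{1/2}(\partial\Omega)$ and $H^{1/2}(\partial\Omega)$ are different spaces (multiplication by the components of $\nu$ does not preserve $H^{1/2}$), so the $J$-type inequality $\|g\|_{L^2(\partial\Omega)}^2\le c\,\|g\|_{N^{1/2}(\partial\Omega)^*}\|g\|_{H^{1/2}(\partial\Omega)}$ is not available. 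In fact, the case $s=1/2$ of the lemma, namely $\|v\|_{H^{1/2}(\Omega)}\le c\,\|g\|_{L^2(\partial\Omega)}$, is exactly what your reiteration would have to produce, and this is a deep harmonic-analysis result on Lipschitz domains, not a soft interpolation consequence of the two extreme bounds.

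The paper closes precisely this hole by importing a third, independent endpoint estimate, $\|v\|_{H^{1/2}(\Omega)}\le c\,\|g\|_{L^2(\partial\Omega)}$ from \cite[Theorem~5.3]{MR2788354} (estimate \eqref{eq:Dapriori2}), and then interpolating separately on the two ranges: between $N^{1/2}(\partial\Omega)^*\to L^2(\Omega)$ and $L^2(\partial\Omega)\to H^{1/2}(\Omega)$ for $s\in[0,\tfrac12]$ (using the dual of \eqref{eq:equivalence1}), and between $L^2(\partial\Omega)\to H^{1/2}(\Omega)$ and $H^{1/2}(\partial\Omega)\to H^1(\Omega)$ for $s\in[\tfrac12,1]$ (standard Sobolev interpolation on the boundary). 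To repair your proof, add the $L^2(\partial\Omega)$ endpoint as a separately cited estimate and perform the interpolation piecewise, matching the piecewise definition of $\mathbb{D}^{s-1/2}(\partial\Omega)$.
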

\begin{remark}
	Notice that owing to Remark~\ref{r:Nspace}, when $\Omega$ is $C^{1,r}$ with 
	$r > 1/2$ we have $N^{\frac{1}{2}}(\partial\Omega)^* = 
	H^{-\frac{1}{2}}(\partial\Omega)$ and thus 
	$\mathbb{D}^{s-\frac{1}{2}}(\partial\Omega) = 
	H^{s-\frac{1}{2}}(\partial\Omega)$. Moreover, by employing similar arguments, in combination with \cite{MR2788354}, the results of Lemma \ref{l:v_exist} can be extended to general Lipschitz domains at least for $s\in[\frac12,1]$.
\end{remark}
\begin{proof}
	The idea of the existence and uniqueness proof of a solution
	to \eqref{eq:v_veryweak} is based on applying the 
	Babu\v{s}ka-Lax-Milgram theorem. This is already outlined in the proof of 
	Lemma 2.3 
	in \cite{NUM:NUM22057}. However, in that reference, the focus was on two 
	dimensional polygonal domains. Since we are working in $n$ space dimensions 
	with different assumptions on the boundary, we 
	present the proof again, also for the convenience of the reader. We also refer to \cite{MR2788354} for related results.

	First, we notice that the bilinear form associated to \eqref{eq:v_veryweak} 
	is obviously bounded on $L^2(\Omega)\times V$. In order to show the inf-sup 
	conditions, we use the isomorphism
	\[
	\Delta\varphi\in L^2(\Omega),\ \varphi|_{\partial\Omega}=0\quad 
	\Leftrightarrow \quad \varphi\in V,
	\]
	which is valid under the present assumptions on the domain according to 
	\cite[Theorem~10.4]{MR2788354}. 
	A norm in $V$ is 
	given by $\|\varphi\|_V=\|\Delta\varphi\|_{L^2(\Omega)}$ due to the 
	standard a priori estimate 
	$\|\varphi\|_{H^2(\Omega)}\leq c\|\Delta\varphi\|_{L^2(\Omega)}$. 
	Then by taking 
	$v=-\Delta\varphi/\|\Delta\varphi\|_{L^2(\Omega)}\in 
	L^2(\Omega)$, we deduce
	\[
	\sup_{\substack{v\in L^2(\Omega)\\ 
			\|v\|_{L^2(\Omega)}=1}}|(v,-\Delta\varphi)_{L^2(\Omega)}|
	\geq\frac{|(\Delta\varphi,\Delta\varphi)_{L^2(\Omega)}|}
	{\|\Delta\varphi\|_{L^2(\Omega)}}=\|\Delta\varphi\|_{L^2(\Omega)}
	=\|\varphi\|_{V}.
	\]
	If we choose $\varphi\in V$ as the solution of 
	$-\Delta\varphi=v/\|v\|_{L^2(\Omega)}$ with some $v\in L^2(\Omega)$ 
	then we obtain
	\[
	\sup_{\substack{\varphi\in V\\ 
			\|\varphi\|_{V}=1}}|(v,-\Delta\varphi)_{L^2(\Omega)}|
	\geq\frac{|(v,v)_{L^2(\Omega)}|}%
	{\|v\|_{L^2(\Omega)}}=\|v\|_{L^2(\Omega)}.
	\]
	It remains to check that the right hand side of \eqref{eq:v_veryweak} 
	defines a linear functional on $V$ for any $g\in 
	N^{\frac12}(\partial\Omega)^*$. 
	In view of Lemma~\ref{l:Nspace} we have that 
	 \begin{equation}\label{eq:apri}
	  \left| \int_{\partial\Omega} g \partial_\nu \varphi \right| 
	   \le \|g\|_{N^{\frac12}(\partial\Omega)^*}
	       \| \partial_\nu \varphi \|_{N^{\frac12}(\partial\Omega)}
	   \le C \|g\|_{N^{\frac12}(\partial\Omega)^*} \|\varphi\|_V .   
	 \end{equation}
	Thus, all the requirements of the Babu\v{s}ka-Lax-Milgram theorem are 
	fulfilled and we can deduce the existence of a unique solution in 
	$L^2(\Omega)$ for any 
	Dirichlet boundary datum $g\in N^{\frac12}(\partial\Omega)^*$.

	The 
	a priori 
	estimate in that case is a simple consequence of the above shown 
	inf-sup condition combined with \eqref{eq:v_veryweak} and \eqref{eq:apri}. 	Indeed, 
	\begin{equation}\label{eq:Daprior1}
	 \|v\|_{L^2(\Omega)} \le \sup_{\substack{\varphi\in V\\ 
			\|\varphi\|_{V}=1}}|(v,-\Delta \varphi)_{L^2(\Omega)}|
			= \sup_{\substack{\varphi\in V\\ 
			\|\varphi\|_{V}=1}}\left| \int_{\partial\Omega} g \partial_\nu \varphi \right|
			\le \|g\|_{N^{\frac{1}{2}}(\partial\Omega)^*} .
	\end{equation}
	Moreover, according to \cite[Theorem 5.3]{MR2788354} there holds
	\begin{equation}\label{eq:Dapriori2}
	\|v\|_{H^{\frac12}(\Omega)}\leq c\|g\|_{L^{2}(\partial\Omega)}.
	\end{equation}	
	Next, we show that for any $g\in H^{1/2}(\partial\Omega)$ the very weak 
	solution belongs to 
	$H^1(\Omega)$ and represents actually a weak solution. For the weak formulation of problem \eqref{eq:v_ver} it is 
	classical to show that for those data there is a unique weak solution in 
	$H^1(\Omega)$ 
	fulfilling the a priori estimate
	\begin{equation}\label{eq:Dapriori3}
	\|v\|_{H^1(\Omega)}\leq c\|g\|_{H^{1/2}(\partial\Omega)}.
	\end{equation}
	According to the integration by parts formula in
	\cite{MR937473} 
	\[
	 (\partial_\nu \varphi, \chi)_{\partial\Omega}
	 = (\nabla \varphi, \nabla \chi)_\Omega 
	  + (\Delta \varphi, \chi)_{\Omega} \quad 
	  \forall \varphi \in V , \quad \forall \chi \in H^1(\Omega),
	\]	
	we can check that any weak solution represents a very weak solution. Just 
	set $\chi = v$ and use $(\nabla \varphi, \nabla v) = 0$. Due to the uniqueness of both, the weak and the very weak solution, 
	they must 
	coincide. Finally, by real interpolation in Sobolev spaces, we can conclude, 
	according to \eqref{eq:Daprior1}--\eqref{eq:Dapriori3} and \eqref{eq:equivalence1}, the existence of a solution in $H^s(\Omega)$ for any boundary datum $g\in 
	\mathbb{D}^{s-\frac{1}{2}}(\partial\Omega)$, and the validity of the a priori estimate, 
	which ends the proof.
\end{proof}

Next we show the uniqueness of the fractional problem for $v$ solution to \eqref{eq:v}. We shall use this result, in combination with Lemma~\ref{l:v_exist}, to show the existence of a solution to \eqref{eq:v}. 
\begin{lemma}\label{l:v_uniq}
A solution $v\in \mathbb{D}^s(\Omega)$ to \eqref{eq:v} is unique. 
\end{lemma}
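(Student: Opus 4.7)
My plan is to reduce the uniqueness question to the well-understood uniqueness for the standard spectral fractional Laplacian $(-\Delta_{D,0})^s$ with zero Dirichlet data, and then to conclude by an eigenfunction expansion.

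First, I would take two candidate solutions $v_1,v_2\in\mathbb{D}^s(\Omega)$ of \eqref{eq:v}, set $v:=v_1-v_2$, and use linearity of $(-\Delta_D)^s$ to obtain $(-\Delta_D)^s v=0$ in $\Omega$ together with $v=0$ on $\partial\Omega$. The critical point is that the vanishing boundary datum forces the boundary pairings in Definition~\ref{d:nzbc} to disappear: for every $k$, one has $\int_{\partial\Omega} v\,\partial_\nu\varphi_k = 0$. Consequently, the membership of $v$ in $\mathbb{D}^s(\Omega)$ collapses to $\sum_k\lambda_k^s v_k^2<\infty$, so that in fact $v\in\mathbb{H}^s(\Omega)$, and by Proposition~\ref{p:presvD} (extended by density from $C_0^\infty(\Omega)$ to $\mathbb{H}^s(\Omega)$) the operator identifies with $(-\Delta_D)^s v=(-\Delta_{D,0})^s v$.

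Second, I would use the series representation
\[
(-\Delta_{D,0})^s v \;=\; \sum_{k=1}^\infty \lambda_k^s v_k\,\varphi_k \;=\; 0 \qquad \text{in } \mathbb{H}^{-s}(\Omega),
\]
and exploit that $\{\varphi_k\}$ is an $L^2$-orthonormal basis: pairing both sides with $\varphi_j\in\mathbb{H}^s(\Omega)$ yields $\lambda_j^s v_j=0$. Since $\lambda_j>0$, every Fourier coefficient $v_j$ vanishes, hence $v\equiv 0$ in $L^2(\Omega)$ and $v_1=v_2$.

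The main subtlety I anticipate is giving a precise meaning to ``$v=0$ on $\partial\Omega$'' for a function $v\in\mathbb{D}^s(\Omega)\subset L^2(\Omega)$, since a generic $L^2$ function admits no classical trace. The interpretation intended by the formulation is exactly through the boundary pairings $\int_{\partial\Omega} v\,\partial_\nu\varphi_k$ that enter the definition of $(-\Delta_D)^s$ (and, under the assumed quasi-convexity of $\Omega$, one may equivalently appeal to the very-weak trace available from the framework surrounding Theorem~\ref{theorem:Dint}), so that $v_1|_{\partial\Omega}=v_2|_{\partial\Omega}=g$ in this weak sense immediately forces all boundary integrals of $v_1-v_2$ against $\partial_\nu\varphi_k$ to vanish. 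Once this trace interpretation is in place, the rest of the argument reduces to the eigenfunction manipulation sketched above.
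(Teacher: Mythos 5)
Your proposal is correct and follows essentially the same route as the paper: reduce by linearity to the case $g\equiv 0$, observe that the boundary pairings $\int_{\partial\Omega}v\,\partial_\nu\varphi_k$ then vanish so the weak form collapses to $\sum_k\lambda_k^s v_k\phi_k=0$ for all $\phi\in\mathbb{H}^s(\Omega)$, and conclude that every $v_k=0$ since $\lambda_k>0$. The only cosmetic difference is that the paper tests with $\phi=v$ to get $\sum_k\lambda_k^s v_k^2=0$ in one stroke, while you test with each $\varphi_j$ separately; both are valid.
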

\begin{proof}
Since \eqref{eq:v} is linear, it is sufficient to show that when $g \equiv 0$ then $v \equiv 0$. The function $v\in\mathbb{D}^s(\Omega)$, solution to \eqref{eq:v} with $g=0$, fulfills
\[
 \sum_{k=1}^\infty \lambda_k^s \int_\Omega v \varphi_k \int_\Omega \phi 
 \varphi_k = 0 \quad \forall \phi\in\mathbb{H}^s(\Omega).
\]
Setting $\phi = v$, we arrive at the asserted result.
\end{proof}

\begin{theorem}\label{t:veqver}
Let the assumptions of Lemma~\ref{l:v_exist} hold. Then solving problem 
\eqref{eq:v} is equivalent to solving 
problem \eqref{eq:v_ver} in the very weak sense. As a consequence, the results of Lemma \ref{l:v_exist} are valid for the solution of the fractional problem \eqref{eq:v}. 
\end{theorem}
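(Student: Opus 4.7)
The plan is to prove the equivalence by matching Fourier coefficients against the eigenbasis $\{\varphi_k\}$, and then invoke Lemma~\ref{l:v_exist} for the regularity statement. For the forward direction, assume $v\in\mathbb{D}^s(\Omega)$ solves \eqref{eq:v}. Using Definition~\ref{d:nzbc} and testing $(-\Delta_D)^s v = 0$ against each $\varphi_k$, together with the interpretation that the boundary value of $v$ enters through $v_{\partial\Omega,k}=\int_{\partial\Omega} g\partial_\nu\varphi_k$, yields the coefficient-wise identity
\[
\lambda_k\int_\Omega v\varphi_k + \int_{\partial\Omega} g\partial_\nu\varphi_k = 0, \quad k\ge 1.
\]
To promote this into the very weak formulation, I would take an arbitrary $\varphi\in V=H^1_0(\Omega)\cap H^2(\Omega)$, use quasi-convexity and \cite[Theorem~10.4]{MR2788354} to identify $V$ with $\mathbb{H}^2(\Omega)$, and expand $\varphi=\sum_k c_k\varphi_k$ with $c_k=\int_\Omega\varphi\varphi_k$. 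Then $-\Delta\varphi=\sum_k \lambda_k c_k\varphi_k$ in $L^2(\Omega)$, and by Lemma~\ref{l:Nspace} combined with the a priori estimate $\|\varphi\|_{H^2(\Omega)}\le c\|\Delta\varphi\|_{L^2(\Omega)}$, the partial sums also converge in $H^2(\Omega)$, so $\partial_\nu\varphi=\sum_k c_k\partial_\nu\varphi_k$ in $N^{1/2}(\partial\Omega)$. Multiplying the coefficient identity by $c_k$, summing, and using the duality pairing of $g\in N^{1/2}(\partial\Omega)^*$ with $\partial_\nu\varphi\in N^{1/2}(\partial\Omega)$ gives $\int_\Omega v(-\Delta\varphi) = -\int_{\partial\Omega}g\partial_\nu\varphi$, i.e., \eqref{eq:v_veryweak}.

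For the converse direction, let $v\in L^2(\Omega)$ be the unique very weak solution from Lemma~\ref{l:v_exist}. Since $\varphi_k\in V$ under the standing quasi-convexity hypothesis, testing \eqref{eq:v_veryweak} with $\varphi=\varphi_k$ recovers the same coefficient identity $\lambda_k\int_\Omega v\varphi_k=-\int_{\partial\Omega}g\partial_\nu\varphi_k$. Adopting the convention $v_{\partial\Omega,k}=\int_{\partial\Omega}g\partial_\nu\varphi_k$ consistent with the boundary condition $v=g$, every term in the defining series of $\mathbb{D}^s(\Omega)$ vanishes, so trivially $v\in\mathbb{D}^s(\Omega)$ and $(-\Delta_D)^s v=0$; uniqueness within $\mathbb{D}^s(\Omega)$ is already supplied by Lemma~\ref{l:v_uniq}. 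The equivalence then transports the existence, uniqueness, and the a priori estimate \eqref{eq:apriorivwD} from Lemma~\ref{l:v_exist} directly to the solution of \eqref{eq:v}.

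The main obstacle is the rigorous handling of the boundary series: one must ensure that the partial sums $\sum_{k\le l} c_k\partial_\nu\varphi_k$ converge in a space on which $g$ acts as a continuous functional, so that the interchange of summation with the boundary pairing is justified. This is exactly where the trace space $N^{1/2}(\partial\Omega)$ and the $H^2$-regularity afforded by quasi-convexity are indispensable; once they are in place, the proof reduces to the Fourier-coefficient matching outlined above.
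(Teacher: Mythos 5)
Your proposal is correct and follows essentially the same route as the paper: reduce the equivalence to matching the coefficient identity $\lambda_k\int_\Omega v\varphi_k+\int_{\partial\Omega}g\,\partial_\nu\varphi_k=0$ against the eigenbasis, and combine with the uniqueness results of Lemma~\ref{l:v_uniq} and Lemma~\ref{l:v_exist}. You are in fact somewhat more careful than the paper's own proof, which simply asserts that the $\varphi_k$ span $V$, whereas you justify the passage from the coefficient identities to arbitrary $\varphi\in V$ via $H^2$-convergence of the partial sums and the $N^{1/2}(\partial\Omega)^*$--$N^{1/2}(\partial\Omega)$ duality.
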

\begin{proof} 
Since both \eqref{eq:v} and \eqref{eq:v_ver} have unique solutions, it is sufficient to show that the solution $v\in \mathbb{D}^s(\Omega)$ to \eqref{eq:v} solves \eqref{eq:v_ver} in the very weak sense. 
The solution $v\in \mathbb{D}^s(\Omega)$ to \eqref{eq:v} fulfills
\[
	\sum_{j=1}^\infty\left(\lambda_j^s\int_{\Omega}v\varphi_j+\lambda_j^{s-1}\int_{\partial\Omega}g\partial_\nu\varphi_j\right)\int_{\Omega}\phi\varphi_j=0\quad \forall \phi\in \mathbb{H}^s(\Omega).
\]
Taking an arbitrary eigenfunction $\varphi_k$ as a test function, and employing the orthogonality of the eigenfunctions in $L^2(\Omega)$, we obtain
\[
  0 = \lambda_k^s \int_\Omega v \varphi_k + \lambda_k^{s-1} 
  \int_{\partial\Omega} g \partial_\nu \varphi_k 
  = \lambda_k^{s-1} \left( \lambda_k\int_\Omega v \varphi_k + 
  \int_{\partial\Omega} g \partial_\nu \varphi_k \right) .
\]
Since $\lambda_k > 0$ and $-\Delta \varphi_k = \lambda_k \varphi_k$, we have arrived at
\[
 \int_\Omega v (-\Delta)\varphi_k = - \int_{\partial\Omega} g 
 \partial_\nu 
 \varphi_k. 
\] 
Since a basis of $V:=\operatorname{dom}(-\Delta_{D,0})$ is given by the 
eigenfunctions $\varphi_k$ we have shown that $v\in \mathbb{D}^s(\Omega)$ 
solves \eqref{eq:v_ver}.
This concludes the proof.
\end{proof}

\begin{theorem}[Existence and uniqueness]\label{t:exist}
Let $\Omega$ be a bounded, quasi-convex domain. If $f \in \mathbb{H}^{-s}(\Omega)$, $g\in \mathbb{D}^{s-\frac{1}{2}}(\partial\Omega)$ then \eqref{eq:fracLap} has a unique solution $u \in H^s(\Omega)$ which satisfies 
\begin{equation}\label{eq:contdepdata}
 \|u\|_{H^s(\Omega)} \le C \left( \|f\|_{\mathbb{H}^{-s}(\Omega)} + \|g\|_{\mathbb{D}^{s-\frac{1}{2}}(\partial\Omega)} \right) ,
\end{equation}
where $C$ is a positive constant independent of $u, f$, and $g$. 
\end{theorem}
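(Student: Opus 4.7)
The plan is to decompose the problem via the lifting strategy already laid out in the discussion preceding the theorem, writing $u=v+w$ where $v$ realizes the fractional harmonic extension of the boundary datum $g$ and $w$ carries the right-hand side $f$ with zero boundary trace. Since both subproblems have been (or are about to be) shown to be well-posed individually, the combination yields both existence and the a priori estimate.

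First I would construct $v\in \mathbb{D}^s(\Omega)$ as the unique solution of \eqref{eq:v}. By Theorem~\ref{t:veqver} this is equivalent to solving the very weak Laplace problem \eqref{eq:v_veryweak}, so Lemma~\ref{l:v_exist} directly yields $v\in H^s(\Omega)$ together with the bound
\[
 \|v\|_{H^s(\Omega)} \le C\,\|g\|_{\mathbb{D}^{s-\frac{1}{2}}(\partial\Omega)}.
\]
Next I would construct $w\in \mathbb{H}^s(\Omega)$ as the unique solution of \eqref{eq:w}. Because $w$ has zero boundary trace, Proposition~\ref{p:presvD} identifies $(-\Delta_D)^s w = (-\Delta_{D,0})^s w$, so \eqref{eq:w} becomes the classical homogeneous-Dirichlet spectral fractional Laplace problem. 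The bilinear form $a(w,\phi)=\langle (-\Delta_{D,0})^{s/2}w,(-\Delta_{D,0})^{s/2}\phi\rangle_{L^2(\Omega)}$ is coercive and bounded on $\mathbb{H}^s(\Omega)\times\mathbb{H}^s(\Omega)$ by the identity $\|\cdot\|_{\mathbb{H}^s(\Omega)}=\|(-\Delta_{D,0})^{s/2}\cdot\|_{L^2(\Omega)}$, and $f\in\mathbb{H}^{-s}(\Omega)$ defines a bounded linear functional on $\mathbb{H}^s(\Omega)$. Lax--Milgram then gives a unique $w\in\mathbb{H}^s(\Omega)$ with $\|w\|_{\mathbb{H}^s(\Omega)}\le C\|f\|_{\mathbb{H}^{-s}(\Omega)}$, and Proposition~\ref{l:HsBHs} upgrades this to $\|w\|_{H^s(\Omega)}\le C\|f\|_{\mathbb{H}^{-s}(\Omega)}$.

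Setting $u:=v+w$, the triangle inequality combined with the two estimates above yields \eqref{eq:contdepdata}. For uniqueness, suppose $u_1,u_2\in H^s(\Omega)$ both solve \eqref{eq:fracLap}; by linearity of $(-\Delta_D)^s$ their difference $\tilde u=u_1-u_2$ solves \eqref{eq:fracLap} with $f\equiv 0$ and $g\equiv 0$, so $\tilde u\in\mathbb{D}^s(\Omega)$ solves \eqref{eq:v} with zero boundary data. Lemma~\ref{l:v_uniq} forces $\tilde u\equiv 0$.

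The main obstacle, such as it is, is not any single calculation but rather making sure the function spaces match across the decomposition: one must verify that $v$, which is produced only in $\mathbb{D}^s(\Omega)$ via the fractional problem but in $H^s(\Omega)$ via the very weak formulation, actually lies in the intersection so that $u=v+w\in H^s(\Omega)$, and that the uniqueness argument is framed in the right class (namely $\mathbb{D}^s(\Omega)$, in which Lemma~\ref{l:v_uniq} applies, noting that $H^s(\Omega)\hookrightarrow\mathbb{D}^s(\Omega)$ through its series representation with respect to $\{\varphi_k\}$). Everything else is bookkeeping on top of Theorem~\ref{t:veqver}, Lemma~\ref{l:v_exist}, and standard Lax--Milgram for the homogeneous fractional problem.
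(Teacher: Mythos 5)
Your proposal is correct and follows essentially the same route as the paper: decompose $u=v+w$, handle $v$ via Theorem~\ref{t:veqver} and Lemma~\ref{l:v_exist}, handle $w$ via the homogeneous spectral problem, and combine with the triangle inequality and Proposition~\ref{l:HsBHs}. The only cosmetic differences are that you run the Lax--Milgram argument for $w$ explicitly where the paper cites \cite[Theorem~2.5]{MR3489634}, and that you spell out the uniqueness step via Lemma~\ref{l:v_uniq} slightly more carefully than the paper does.
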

\begin{proof}
Notice that solving \eqref{eq:fracLap} for $u$ is equivalent to solving \eqref{eq:v} and \eqref{eq:w} for $v$ and $w$, respectively. Then $u = w+v$. The existence and uniqueness of $w \in \mathbb{H}^s(\Omega)$ for Lipschitz domains is due to \cite[Theorem~2.5]{MR3489634}.
The existence and uniqueness of $v \in H^s(\Omega)$ is given by Theorem~\ref{t:veqver} which says that \eqref{eq:v} is equivalent to \eqref{eq:v_ver} such that the results of Lemma~\ref{l:v_exist} apply. 
Finally, using the triangle inequality we obtain
\[
 \|u\|_{H^s(\Omega)} \le \|w\|_{H^s(\Omega)} + \|v\|_{H^s(\Omega)} . 
\]
From Theorem~\ref{t:veqver} and Lemma~\ref{l:v_exist} we know that $\|v\|_{H^s(\Omega)} \le C 
\|g\|_{\mathbb{D}^{s-\frac12}(\partial\Omega)}$. It remains to estimate 
$\|w\|_{H^s(\Omega)}$. Using Proposition~\ref{l:HsBHs} we obtain 
\[
 \|w\|_{H^s(\Omega)} \le C \|w\|_{\mathbb{H}^s(\Omega)} ,
\]
and from the weak form of \eqref{eq:w} it immediately follows that $\|w\|_{\mathbb{H}^s(\Omega)} \le C \|f\|_{\mathbb{H}^{-s}(\Omega)}$. Collecting all the estimates we obtain \eqref{eq:contdepdata}.
\end{proof}

In Section \ref{s:apriori_dbc} we will be concerned with discretization error estimates for \eqref{eq:fracLap}. For that purpose we need to establish higher regularity for the solution $u=w+v$ given more regular data $f$ and $g$. Due to the fact that the solution $w$ to \eqref{eq:w} is formally given by
\[
	w=\sum_{k=1}^\infty \lambda_k^{-s} f_k \varphi_k\quad\text{with}\quad f_k=\int_\Omega f\varphi_k,
\]
we obtain that $w$ belongs to $\mathbb{H}^{1+s}(\Omega)$ for any $f\in \mathbb{H}^{1-s}(\Omega)$.
The results about higher regularity for the solution $v$ to \eqref{eq:v} are collected in the following lemma.
\begin{lemma}[Regularity of $v$]\label{l:regv}
	Let one of the following conditions be fulfilled:
	\begin{enumerate}[label=\emph{(\alph*)}]
		\item\label{it:1} $0 \leq s \leq \frac12$: $\Omega$ is Lipschitz, $g \in H^{s+\frac12}(\partial\Omega)$,
		\item\label{it:2} $\frac12 < s \leq 1$: $\Omega$ is quasi-convex, $g \in [\gamma_D(H^2(\Omega)), H^1(\partial\Omega)]_{2(1-s)}$,  
	\end{enumerate}
	where $\gamma_D$ denotes the Dirichlet trace operator. Then $v$ belongs to $H^{1+s}(\Omega)$ and fulfills
		\[
			\|v\|_{H^{1+s}(\Omega)}\leq C \|g\|_{\mathbb{D}^{s+\frac12}(\partial\Omega)}
		\]
	with a constant $C$ independent of $g$, and the trace space $\mathbb{D}^{s+\frac12}(\partial\Omega)$ defined by 
		\[
			\mathbb{D}^{s+\frac12}(\partial\Omega):=\begin{cases}
					H^{s+\frac12}(\partial\Omega) & \text{if }0\leq s\leq \frac12,\\
					[\gamma_D(H^2(\Omega)), H^1(\partial\Omega)]_{2(1-s)} & \text{if } \frac12 <s\leq 1.
				\end{cases}
		\]
\end{lemma}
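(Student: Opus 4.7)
The plan is to leverage Theorem~\ref{t:veqver}, which identifies the fractional harmonic extension $v$ with the very weak solution of the classical harmonic extension problem $-\Delta v = 0$ in $\Omega$, $v = g$ on $\partial\Omega$. Hence the lemma reduces to a purely classical regularity statement for harmonic functions with Dirichlet data of varying smoothness, and the proof will proceed by interpolating two endpoint regularity estimates.

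First I would establish the two endpoints. The lower endpoint is the classical weak solution bound
\[
\|v\|_{H^{1}(\Omega)} \le C \|g\|_{H^{1/2}(\partial\Omega)},
\]
already used in the proof of Lemma~\ref{l:v_exist}, which holds for any bounded Lipschitz domain. A first higher-regularity endpoint, valid again for bounded Lipschitz $\Omega$, is the Jerison--Kenig/Savar\'e estimate
\[
\|v\|_{H^{3/2}(\Omega)} \le C \|g\|_{H^{1}(\partial\Omega)}.
\]
The second higher-regularity endpoint, valid on bounded quasi-convex $\Omega$, is
\[
\|v\|_{H^{2}(\Omega)} \le C \|g\|_{\gamma_D(H^2(\Omega))},
\]
obtained by lifting $g$ to some $G \in H^2(\Omega)$ with $\gamma_D G = g$, noting that $v - G$ solves a homogeneous Dirichlet problem with $L^2$ right-hand side $\Delta G$, and applying the $H^2$ regularity on quasi-convex domains \cite[Theorem~10.4]{MR2788354}.

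For case \ref{it:1} I would interpolate the first pair of estimates with parameter $1-2s$. On the boundary side, classical real interpolation yields $[H^{1}(\partial\Omega),H^{1/2}(\partial\Omega)]_{1-2s} = H^{s+1/2}(\partial\Omega)$, and on the domain side $[H^{3/2}(\Omega),H^{1}(\Omega)]_{1-2s} = H^{1+s}(\Omega)$, which delivers the claimed bound. For case \ref{it:2} I would interpolate the latter two estimates with parameter $2(1-s)$: the boundary side is by definition $[\gamma_D(H^2(\Omega)),H^{1}(\partial\Omega)]_{2(1-s)} = \mathbb{D}^{s+1/2}(\partial\Omega)$, and the domain side gives $[H^{2}(\Omega),H^{3/2}(\Omega)]_{2(1-s)} = H^{1+s}(\Omega)$. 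In both cases, since the solution operator $g \mapsto v$ is linear, the interpolated bound follows.

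The main technical obstacle is justifying the two higher endpoint estimates in the required level of generality: the $H^{3/2}$ estimate on merely Lipschitz domains relies on non-trivial harmonic analysis (Jerison--Kenig), and the $H^{2}$ estimate requires the quasi-convexity assumption precisely in order to have full elliptic regularity for the homogeneous Dirichlet problem with $L^2$ right-hand side. Once these are in place, the interpolation step is routine, but one must verify that the solution map is indeed a bounded linear operator between the relevant compatible couples so that the functorial property of real interpolation applies without further qualification.
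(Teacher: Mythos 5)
Your proposal is correct and follows essentially the same route as the paper: case \ref{it:2} is handled identically, by real interpolation between the endpoint estimates $g\in H^1(\partial\Omega)\Rightarrow v\in H^{3/2}(\Omega)$ and $g\in\gamma_D(H^2(\Omega))\Rightarrow v\in H^2(\Omega)$ from \cite[Theorem 10.4]{MR2788354} on quasi-convex domains. The only cosmetic difference is in case \ref{it:1}, where the paper cites the Lipschitz-domain regularity result \cite[Theorem 5.3]{MR2788354} directly for the whole range $s\in[0,\frac12]$ rather than re-deriving it by interpolating the weak-solution bound with the $H^{3/2}$ endpoint, but this rests on the same underlying harmonic-analysis input.
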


	\begin{remark}\label{re:trace}
		Notice that by definition every quasi-convex domain is Lipschitz, therefore condition \ref{it:1} in Lemma~\ref{l:regv} also holds in quasi-convex domains. Moreover, when $\Omega$ is $C^{1,r}$ with $1/2 < r < 1$ (cf. \cite[Lemma 10.1]{MR2788354}), then $\gamma_D(H^2(\Omega)) = H^{3/2}(\partial\Omega)$, whence, the interpolation space in part \ref{it:2} of Lemma~\ref{l:regv} is
		\[
		[\gamma_D(H^2(\Omega)), H^1(\partial\Omega)]_{2(1-s)} 
		= H^{s+1/2}(\partial\Omega) . 
		\]
		Notice as well that $g\in \gamma_D(H^2(\Omega))$ implies that on each side/face $\Gamma_i$ of a polygonal/polyhedral domain $\Omega$ we have $g\in H^\frac32(\Gamma_i)$. Consequently, in case of polygonal/polyhedral domains, we conclude by real interpolation
			\[
				\|g\|_{H^{s+\frac12}(\Gamma_i)}\leq c \|g\|_{\mathbb{D}^{s+\frac12}(\partial\Omega)}
			\]
		for any $g\in \mathbb{D}^{s+\frac12}(\partial\Omega)$.
	\end{remark}

\begin{proof}
	When $0 \leq s \leq \frac12$, this result follows from  \cite[Theorem 5.3]{MR2788354}. Finally, when $\frac12 < s \leq 1$ we recall from \cite[Eq.~(10.16)-(10.17) in Theorem 10.4]{MR2788354}
	\begin{align*}
	g \in H^1(\partial\Omega) \quad           & \mbox{implies } v \in H^{3/2}(\Omega), \\
	g \in \gamma_D(H^2(\Omega)) \quad & \mbox{implies } v \in H^{2}(\Omega).
	\end{align*}
	Moreover, corresponding natural a priori estimates are valid. Using real interpolation we arrive at 
	\[
	g \in [\gamma_D(H^2(\Omega)), H^1(\partial\Omega)]_{2(1-s)} \quad \mbox{implies } v \in H^{1+s}(\Omega),
	\]
	which completes the proof. 
\end{proof}

\subsection{The extended problem}\label{s:CSext}

It is well-known that \eqref{eq:w} can equivalently be posed on a semi-infinite cylinder. This approach in $\mathbb{R}^n$ is due to Caffarelli and Silvestre \cite{LCaffarelli_LSilvestre_2007a}. The restriction to bounded domains was considered by Stinga-Torrea in \cite{PRStinga_JLTorrea_2010a}, see also \cite{XCabre_JTan_2010a, ACapella_JDavila_LDupaigne_YSire_2011a}. For the existence and uniqueness of a solution to the problem on the semi-infinite cylinder it is sufficient to consider $\Omega$ to be a bounded open set with Lipschitz boundary \cite[Theorem~2.5]{MR3489634}. 

We first introduce the required notation, we will follow \cite[section~3]{HAntil_JPfefferer_MWarma_2016a}. We denote by $\mathcal{C}$ the aforementioned semi-infinite cylinder with base $\Omega$, i.e., $\mathcal{C} = \Omega \times (0,\infty)$, and its lateral boundary $\partial_L\mathcal{C} := \partial\Omega \times [0,\infty)$. We also need to define a truncated cylinder: for $\mathcal{Y} > 0$, the truncated cylinder is given by $\mathcal{C}_\mathcal{Y}$. Additionally, we set $\partial_L\mathcal{C}_\mathcal{Y} := \partial\Omega \times [0,\mathcal{Y}]$. As $\mathcal{C}$ and $\mathcal{C}_\mathcal{Y}$ are objects in $\mathbb{R}^{n+1}$, we use $y$ to denote the extended variable, such that a vector $x' \in \mathbb{R}^n$ admits the representation $x' = (x_1, \dots, x_n, x_{n+1}) = (x,x_{n+1}) = (x,y)$ with $x_i \in \mathbb{R}$ for $i = 1, \dots, n+1$, $x \in \mathbb{R}^n$ and $y \in \mathbb{R}$.

Next we introduce the weighted Sobolev spaces with a degenerate/singular weight function $y^\alpha$, $\alpha \in (-1,1)$, see \cite[Section~2.1]{Turesson}, \cite{KO84}, and \cite[Theorem~1]{GU} for further discussion on such spaces. Towards this end, let $\mathcal{D}\subset \mathbb{R}^{n}\times[0,\infty)$ be an open set, such as $\mathcal{C}$ or $\mathcal{C}_\mathpzc{Y}$, then we define the weighted space $L^2(y^\alpha,\mathcal{D})$ as the space of all measurable functions defined on $\mathcal{D}$ with finite norm $\|w\|_{L^2(y^\alpha,\mathcal{D})}:=\|y^{\alpha/2}w\|_{L^2(\mathcal{D})}$. Similarly, using a standard multi-index notation, the space $H^1(y^\alpha,\mathcal{D})$ denotes the space of all measurable functions $w$ on $\mathcal{D}$ whose weak derivatives $D^\delta w$ exist for $|\delta|=1$ and fulfill
\[
	\|w\|_{H^1(y^\alpha,\mathcal{D})}:=\left(\sum_{|\delta|\leq 1}\|D^\delta w\|^2_{L^2(y^\alpha,\mathcal{D})}\right)^{1/2}<\infty.
\]
To study the extended problems we also need to introduce the space
\[
  \mathring{H}^1_L(y^\alpha,\mathcal{C}):=\{w\in H^1(y^\alpha,\mathcal{C}):w=0\text{ on } \partial_L\mathcal{C}\}.
\]
The space $\mathring{H}^1_L(y^\alpha,\mathcal{C}_\mathpzc{Y})$ is defined analogously, i.e.,
\[
\mathring{H}^1_L(y^\alpha,\mathcal{C}_\mathpzc{Y}):=\{w\in H^1(y^\alpha,\mathcal{C}_\mathpzc{Y}):w=0\text{ on } \partial_L\mathcal{C}_\mathpzc{Y}\cup \Omega\times \{\mathpzc{Y}\}\}.
\]
We finally state the extended problem in the weak form: Given $f\in \mathbb{H}^{-s}(\Omega)$, find $\mathcal{W}\in \mathring{H}^1_L(y^\alpha,\mathcal{C})$ such that 
\begin{equation}\label{eq:extendedweak}
\int_\mathcal{C} y^\alpha \nabla \mathcal{W}\cdot\nabla \Phi=d_s\langle f,\Phi	
\rangle_{\mathbb{H}^{-s}(\Omega),\mathbb{H}^s(\Omega)}\quad \forall 
\Phi\in\mathring{H}^1_L(y^\alpha,\mathcal{C})
\end{equation}
with $\alpha=1-2s$ and $d_s=2^\alpha \frac{\Gamma(1-s)}{\Gamma(s)}$, where we recall that $0<s<1$. That is, the function $\mathcal W\in \mathring{H}^1_L(y^\alpha,\mathcal{C})$ is a weak solution of the following  problem
\begin{equation}\label{edp}
\begin{cases}
\mbox{div}(y^\alpha\nabla \mathcal W)=0\;\;&\mbox{ in}\;\mathcal C,\\
\frac{\partial\mathcal W}{\partial\nu^\alpha}=d_s f\;\;\;&\mbox{ on }\;\Omega\times\{0\},
\end{cases}
\end{equation}
where we have set
\begin{align*}
\frac{\partial\mathcal W}{\partial\nu^\alpha}(x,0)=\lim_{y\to 0}y^\alpha\mathcal W_y(x,y)=\lim_{y\to 0}y^\alpha\frac{\partial\mathcal W(x,y)}{\partial y}.
\end{align*}
Even though the extended problem \eqref{eq:extendedweak} is local (in contrast to the nonlocal problem \eqref{eq:w}), however, a direct discretization is still challenging due to the semi-infinite computational domain $\mathcal{C}$. To overcome this, we employ the exponential decay of the solution $\mathcal{W}$ in certain norms as $y$ tends to infinity, see \cite{RHNochetto_EOtarola_AJSalgado_2014a}. This suggests truncating the semi-infinite cylinder, leading to a problem posed on the truncated cylinder $\mathcal{C}_\mathpzc{Y}$: Given $f \in \mathbb{H}^{-s}(\Omega)$, find $\mathcal{W}_{\mathpzc{Y}}  \in  \mathring{H}^1_L(y^\alpha,\mathcal{C}_\mathpzc{Y})$ such that 
\begin{equation}\label{eq:truncatedweak}
\int_{\mathcal{C}_\mathpzc{Y}} y^\alpha \nabla 
\mathcal{W}_\mathpzc{Y}\cdot\nabla \Phi=d_s\langle f,\Phi	
\rangle_{\mathbb{H}^{-s}(\Omega),\mathbb{H}^{s}(\Omega)}\quad \forall 
\Phi\in\mathring{H}^1_L(y^\alpha,\mathcal{C}_\mathpzc{Y}).
\end{equation}
We refer to \cite[Theorem~3.5]{RHNochetto_EOtarola_AJSalgado_2014a} for the estimate of the truncation error.

\subsection{A Priori Error Estimates}\label{s:apriori_dbc}

To get an approximation of $\mathcal{W}$, we apply the approach from \cite{RHNochetto_EOtarola_AJSalgado_2014a}, i.e. the truncated problem is discretized by a finite element method, and in order to obtain an approximation of $v$, we will use the approach described in \cite{MR2084239,NUM:NUM22057,apelnicaisepfefferer:2015-2extended} or equivalently a standard finite element method if the boundary datum is smooth enough. From here on, we assume that the domain $\Omega$ is convex and polygonal/polyhedral. 	

Due to the singular behavior of $\mathcal{W}$ towards the boundary $\Omega$, we will use anistropically refined meshes. We define these meshes as follows:
Let $\mathscr{T}_\Omega=\{K\}$ be a conforming and quasi-uniform triangulation of $\Omega$, where $K\in \mathbb{R}^n$ is an element that is isoparametrically equivalent either to the unit cube or to the unit simplex in $\mathbb{R}^n$. We assume $\# \mathscr{T}_\Omega \sim M^n$. Thus, the element size $h_{\mathscr{T}_\Omega}$ fulfills $h_{\mathscr{T}_\Omega}\sim M^{-1}$. The collection of all these meshes is denoted by $\mathbb{T}_\Omega$. Furthermore, let $\mathcal{I}_\mathpzc{Y}=\{I\}$ be a graded mesh of the interval $[0,\mathpzc{Y}]$ in the sense that $[0,\mathpzc{Y}]=\bigcup_{k=0}^{M-1}[y_k,y_{k+1}]$ with
\[
y_k=\left(\frac{k}{M}\right)^\gamma\mathpzc{Y},\quad k=0,\ldots,M,\quad \gamma>\frac{3}{1-\alpha}=\frac{3}{2s}>1.
\]
Now, the triangulations $\mathscr{T}_\mathpzc{Y}$ of the cylinder $\mathcal{C}_\mathpzc{Y}$ are constructed as tensor product triangulations by means of $\mathscr{T}_\Omega$ and $\mathcal{I}_\mathpzc{Y}$. The definitions of both imply $\# \mathscr{T}_\mathpzc{Y} \sim M^{n+1}$.
Finally, the collection of all those anisotropic meshes $\mathscr{T}_\mathpzc{Y}$ is denoted by 
$\mathbb{T}$.

Now, we define the finite element spaces posed on the previously introduced 
meshes. 
For every $\mathscr{T}_\mathpzc{Y}\in \mathbb{T}$ the finite element spaces 
$\mathbb{W}(\mathscr{T}_\mathpzc{Y})$ are now defined by
\[
\mathbb{W}(\mathscr{T}_\mathpzc{Y}):=\{\Phi\in C^0(\overline{ \mathcal{C}_\mathpzc{Y}}):\Phi|_{T}\in\mathcal{P}_1(K)\oplus\mathbb{P}_1(I)\ \forall \;T=K\times I\in \mathscr{T}_\mathpzc{Y},\ \Phi|_{\partial_L\mathcal{C}_\mathpzc{Y}}=0\}.
\]
In case that $K$ in the previous definition is a simplex then $\mathcal{P}_1(K)=\mathbb{P}_1(K)$, the set of polynomials of degree at most $1$. If $K$ is a cube then $\mathcal{P}_1(K)$ equals $\mathbb{Q}_1(K)$, the set of polynomials of degree at most 1 in 
each variable.

Using the just introduced notation, the finite element discretization of \eqref{eq:truncatedweak} is given by the function $\mathcal{W}_{\mathscr{T}_\mathpzc{Y}}\in\mathbb{W}(\mathscr{T}_\mathpzc{Y})$ which solves the variational identity
\begin{equation}\label{eq:truncateddiscrete}
\int_{\mathcal{C}_\mathpzc{Y}} y^\alpha \nabla 
\mathcal{W}_{\mathscr{T}_\mathpzc{Y}}\cdot\nabla \Phi =d_s\langle f,\Phi	
\rangle_{\mathbb{H}^{-s}(\Omega),\mathbb{H}^s(\Omega)}\quad \forall 
\Phi\in\mathbb{W}(\mathscr{T}_\mathpzc{Y})
\end{equation}
with $\alpha=1-2s$ and $d_s=2^\alpha \frac{\Gamma(1-s)}{\Gamma(s)}$, where we recall that $0<s<1$.

Next we are concerned with the discretization of \eqref{eq:v_ver}. Since we will assume that the boundary datum $g$ belongs at least to $H^{\frac12}(\partial \Omega)$, a standard finite element discretization is applicable, see e.g. \cite{Bartels2004}. More precisely, let 
	\[
	\mathbb{V} := \{ \phi \in C^0(\overline\Omega) : \phi|_{K} \in \mathcal{P}_1(K) \} , \quad
	\mathbb{V}_0 := \mathbb{V} \cap H^1_0(\Omega), \quad \mathbb{V}^\partial = \mathbb{V}|_{\partial\Omega}. 
	\]
Moreover, let $\Pi_{\mathscr{T}_\Omega}$ denote the $L^2$-projection into $\mathbb{V}$. Then we seek a discrete solution $v_{\mathscr{T}_\Omega}\in \mathbb{V}_* := \{v \in \mathbb{V} : v|_{\partial\Omega} = \Pi_{\mathscr{T}_\Omega} g\}$ which fulfills 
\begin{equation}\label{eq:vwdisc}
\int_\Omega \nabla v_{\mathscr{T}_\Omega} \cdot \nabla \phi= 0 \quad \forall 
\phi \in \mathbb{V}_0 .  
\end{equation}
	
Notice that in case that $g\notin H^{\frac12}(\partial \Omega)$, the weak formulation of \eqref{eq:v_ver} is not well-posed. However, the discretization \eqref{eq:vwdisc} is still reasonable and corresponding error estimates hold, see \cite{MR2084239,NUM:NUM22057,apelnicaisepfefferer:2015-2extended}.

Finally, we define the discrete solution to \eqref{eq:fracLap} as 
\begin{equation}\label{eq:discu}
 u_{\mathscr{T}_\Omega} = \mathcal{W}_{\mathscr{T}_\mathpzc{Y}}(\cdot,0) + v_{\mathscr{T}_\Omega},
\end{equation}
where $\mathcal{W}_{\mathscr{T}_\mathpzc{Y}}$ and $v_{\mathscr{T}_\Omega}$ solve \eqref{eq:truncateddiscrete} and \eqref{eq:vwdisc}, respectively. 

We conclude this section with the following theorem about discretization error estimates for $u_{\mathscr{T}_\Omega}$.

\begin{theorem}\label{t:fem_est}
Let $\Omega$ convex polygonal/polyhedral, $g \in \mathbb{D}^{s+\frac12}(\partial\Omega)$ satisfies the conditions of Lemma~\ref{l:regv}, and $f \in \mathbb{H}^{1-s}(\Omega)$. Moreover, let $u$ be the solution of \eqref{eq:fracLap} and let $u_{\mathscr{T}_\Omega}$ be as in \eqref{eq:discu} then there is a constant $C>0$ independent of the data such that 
\begin{equation}\label{eq:Hsu}
 \|u - u_{\mathscr{T}_\Omega}\|_{H^s(\Omega)} \le C |\log(\# \mathscr{T}_\mathpzc{Y})|^s(\# \mathscr{T}_\mathpzc{Y})^{-\frac{1}{(n+1)}} \left( \|f\|_{\mathbb{H}^{1-s}(\Omega)} + \|g\|_{\mathbb{D}^{s+\frac12}(\partial\Omega)} \right)
\end{equation}
and 
\begin{equation}\label{eq:L2u}
 \|u - u_{\mathscr{T}_\Omega}\|_{L^2(\Omega)} \le C |\log(\# \mathscr{T}_\mathpzc{Y})|^{2s}(\# \mathscr{T}_\mathpzc{Y})^{-\frac{(1+s)}{(n+1)}} \left( \|f\|_{\mathbb{H}^{1-s}(\Omega)} + \|g\|_{\mathbb{D}^{s+\frac12}(\partial\Omega)} \right) 
\end{equation}
provided that $\mathpzc{Y}\sim \log(\# \mathscr{T}_\mathpzc{Y})$.
\end{theorem}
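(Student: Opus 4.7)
The plan is to decompose $u-u_{\mathscr{T}_\Omega} = (w-\mathcal{W}_{\mathscr{T}_\mathpzc{Y}}(\cdot,0)) + (v-v_{\mathscr{T}_\Omega})$ via the triangle inequality and estimate each summand separately in both the $H^s(\Omega)$ and $L^2(\Omega)$ norms. The two components decouple naturally: the first term is controlled using the existing extension-based theory for the homogeneous fractional problem, while the second is handled by standard finite element analysis for a regular elliptic problem, where the regularity gain from Lemma~\ref{l:regv} is the crucial input.

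For the fractional part, I would first observe that since $f\in\mathbb{H}^{1-s}(\Omega)$, the series representation of $w=\sum_{k}\lambda_k^{-s}f_k\varphi_k$ shows $w\in\mathbb{H}^{1+s}(\Omega)$ with $\|w\|_{\mathbb{H}^{1+s}(\Omega)}\le C\|f\|_{\mathbb{H}^{1-s}(\Omega)}$. Then I would combine the truncation error estimate from \cite[Theorem~3.5]{RHNochetto_EOtarola_AJSalgado_2014a} (using the exponential decay in $y$ and choosing $\mathpzc{Y}\sim\log(\#\mathscr{T}_\mathpzc{Y})$) with the discretization error estimates for \eqref{eq:truncateddiscrete} on the graded anisotropic tensor-product meshes to obtain
\[
\|w-\mathcal{W}_{\mathscr{T}_\mathpzc{Y}}(\cdot,0)\|_{H^s(\Omega)}\le C|\log(\#\mathscr{T}_\mathpzc{Y})|^s(\#\mathscr{T}_\mathpzc{Y})^{-\tfrac{1}{n+1}}\|f\|_{\mathbb{H}^{1-s}(\Omega)},
\]
and the analogous $L^2$ bound with rate $(\#\mathscr{T}_\mathpzc{Y})^{-\tfrac{1+s}{n+1}}$ and logarithmic factor $|\log(\#\mathscr{T}_\mathpzc{Y})|^{2s}$, using a duality/Aubin--Nitsche-type argument in the weighted setting.

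For the lifting part I would first invoke Lemma~\ref{l:regv} to conclude $v\in H^{1+s}(\Omega)$ with $\|v\|_{H^{1+s}(\Omega)}\le C\|g\|_{\mathbb{D}^{s+\frac12}(\partial\Omega)}$, using that $\Omega$ is convex and polygonal/polyhedral. Since $h_{\mathscr{T}_\Omega}\sim M^{-1}\sim(\#\mathscr{T}_\mathpzc{Y})^{-\tfrac{1}{n+1}}$, standard finite element theory on quasi-uniform meshes (see e.g.~\cite{Bartels2004} or the references cited for the very-weak approach) yields $\|v-v_{\mathscr{T}_\Omega}\|_{H^1(\Omega)}\le Ch^s\|v\|_{H^{1+s}(\Omega)}$ and, by a duality argument exploiting $H^2$-regularity of the adjoint problem on the convex domain, $\|v-v_{\mathscr{T}_\Omega}\|_{L^2(\Omega)}\le Ch^{1+s}\|v\|_{H^{1+s}(\Omega)}$; the approximation of the Dirichlet datum by $\Pi_{\mathscr{T}_\Omega}g$ contributes consistent terms at these same rates thanks to the regularity of $g$. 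The $H^s$-error estimate then follows by interpolation, $\|v-v_{\mathscr{T}_\Omega}\|_{H^s(\Omega)}\le\|v-v_{\mathscr{T}_\Omega}\|_{H^1(\Omega)}^s\|v-v_{\mathscr{T}_\Omega}\|_{L^2(\Omega)}^{1-s}\le Ch$, exactly matching the rate from the fractional part.

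The main technical obstacle I anticipate is verifying that the interpolation/boundary-approximation terms for $v_{\mathscr{T}_\Omega}$ remain controlled uniformly across the range $s\in(0,1)$; in particular the trace space $\mathbb{D}^{s+\frac12}(\partial\Omega)$ in Lemma~\ref{l:regv} for $s>\tfrac12$ is not a standard Sobolev space, so one must argue piecewise on each face of $\partial\Omega$ (cf. Remark~\ref{re:trace}) to obtain the required local $H^{s+\frac12}$ regularity and then use real interpolation to match the global rate. Assembling the two bounds with the identification $h\sim(\#\mathscr{T}_\mathpzc{Y})^{-\tfrac{1}{n+1}}$ and applying the triangle inequality then yields \eqref{eq:Hsu} and \eqref{eq:L2u}.
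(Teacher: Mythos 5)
Your proposal is correct and follows essentially the same route as the paper: the same splitting $u-u_{\mathscr{T}_\Omega}=(w-\mathcal{W}_{\mathscr{T}_\mathpzc{Y}}(\cdot,0))+(v-v_{\mathscr{T}_\Omega})$, the bound of the fractional part via Proposition~\ref{l:HsBHs} and the extension-based estimates of Nochetto--Ot\'arola--Salgado (the $L^2$ bound you describe as a weighted duality argument is exactly what the cited Proposition~4.7 of that work provides), and the bound of the lifting part via Lemma~\ref{l:regv}, the $H^1$/$L^2$ rates $h^s$ and $h^{1+s}$, and interpolation to get rate $h$ in $H^s(\Omega)$, with $h_{\mathscr{T}_\Omega}\sim(\#\mathscr{T}_\mathpzc{Y})^{-1/(n+1)}$. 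Your closing remark about handling the trace space for $s>\tfrac12$ face by face is precisely the content of Remark~\ref{re:trace}, which the paper also invokes at this point.
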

\begin{proof}
After applying the triangle inequality we arrive at 
\[
 \|u - u_{\mathscr{T}_\Omega}\|_{H^s(\Omega)} 
 \le \| w - \mathcal{W}_{\mathscr{T}_\mathpzc{Y}}(\cdot,0) \|_{H^s(\Omega)}
  +  \| v - v_{\mathscr{T}_\Omega} \|_{H^s(\Omega)} . 
\]
We treat each term on the right-hand-side separately. Using 
Proposition~\ref{l:HsBHs} 
we obtain
\[
 \| w - \mathcal{W}_{\mathscr{T}_\mathpzc{Y}}(\cdot,0) \|_{H^s(\Omega)}
 \le C \| w - \mathcal{W}_{\mathscr{T}_\mathpzc{Y}}(\cdot,0) \|_{\mathbb{H}^s(\Omega)} .
\] 
Subsequently invoking \cite[Theorem~5.4 and Remark~5.5]{RHNochetto_EOtarola_AJSalgado_2014a} we arrive at
\[
 \| w - \mathcal{W}_{\mathscr{T}_\mathpzc{Y}}(\cdot,0) \|_{\mathbb{H}^s(\Omega)}
 \le C |\log(\# \mathscr{T}_\mathpzc{Y})|^s(\# \mathscr{T}_\mathpzc{Y})^{-\frac{1}{(n+1)}} \|f\|_{\mathbb{H}^{1-s}(\Omega)} . 
\]
Condensing the last two estimates we obtain 
\[
 \| w - \mathcal{W}_{\mathscr{T}_\mathpzc{Y}}(\cdot,0) \|_{H^s(\Omega)}
 \le C |\log(\# \mathscr{T}_\mathpzc{Y})|^s(\# \mathscr{T}_\mathpzc{Y})^{-\frac{1}{(n+1)}} \|f\|_{\mathbb{H}^{1-s}(\Omega)}.
\]
We now turn to $\| v - v_{\mathscr{T}_\Omega} \|_{H^s(\Omega)}$. Using classical arguments (see e.g. \cite{Bartels2004}), in combination with the regularity results of Lemma \ref{l:regv} and Remark \ref{re:trace}, we infer the estimates 
\begin{align*}
\|v-v_{\mathscr{T}_\Omega}\|_{H^1(\Omega)} &\le C h_{\mathscr{T}_\Omega}^{s} \|g\|_{\mathbb{D}^{s+\frac12}(\partial\Omega)}, \\
\|v-v_{\mathscr{T}_\Omega}\|_{L^2(\Omega)} &\le C h_{\mathscr{T}_\Omega}^{1+s} \|g\|_{\mathbb{D}^{s+\frac12}(\partial\Omega)}.
\end{align*} 
Due to the fact that $H^s(\Omega)$ is the interpolation space between $H^1(\Omega)$ and $L^2(\Omega)$, see \eqref{eq:inf_interp_Hs}, we obtain 
\[
 \|v-v_{\mathscr{T}_\Omega}\|_{H^s(\Omega)} \le C h_{\mathscr{T}_\Omega} \|g\|_{\mathbb{D}^{s+\frac12}(\partial\Omega)} ,
\]
where the constant $C > 0$ is independent of $h_{\mathscr{T}_\Omega}$ and the data. Collecting the estimates for $v$, $w$, we obtain \eqref{eq:Hsu} after having observed that $h_{\mathscr{T}_\Omega}\sim (\# \mathscr{T}_\mathpzc{Y})^{-\frac{1}{(n+1)}}$.

Finally, \eqref{eq:L2u} is due the $L^2$-estimate of $\mathcal{W}_{\mathscr{T}_\mathpzc{Y}}(\cdot,0)$ \cite[Proposition 4.7]{RHNochetto_EOtarola_AJSalgado_2014b} and the aforementioned $L^2$-estimate of $v_{\mathscr{T}_\Omega}$. 
\end{proof}

\section{Application III: Fractional Equation with Neumann Boundary Condition}\label{s:app_n}

Given data $f \in H^{s}(\Omega)^*$, $g \in \mathbb{N}^{s-\frac{3}{2}}(\partial\Omega)$ with
\[
\mathbb{N}^{s-\frac{3}{2}}(\partial\Omega):=\begin{cases} N^{\frac32-s}(\partial\Omega)^*&\text{for }s\in[0,\frac12)\\
H^{-1}(\partial\Omega)&\text{for }s=\frac12\\
H^{s-\frac32}(\partial\Omega)&\text{for }s\in(\frac12,1]
\end{cases},
\]
we seek a function $u \in H_{\int}^s(\Omega)$ satisfying
\begin{align}\label{eq:fracLap_n}
\begin{aligned}
 (-\Delta_N)^s u &= f \quad \mbox{in } \Omega , \\
  \partial_\nu u &= g \quad \mbox{on } \partial\Omega. 
\end{aligned}
\end{align}
We assume that the data $f$ and $g$ additionally fulfill the compatibility condition
\begin{equation}\label{eq:comcond}
\int_\Omega f + \int_{\partial\Omega} g = 0. 
\end{equation} 
Now, we proceed as in Section~\ref{s:app}. Given $g \in \mathbb{N}^{s-\frac{3}{2}}(\partial\Omega)$,  
we construct $v \in \mathbb{N}^s(\Omega)$ solving 
\begin{align}\label{eq:v_n}
\begin{aligned}
(-\Delta_N)^s v &= |\Omega|^{-1}\int_{\Omega}f  \quad \mbox{in } \Omega \\
\partial_\nu  v &= g  \quad \mbox{on } \partial\Omega ,
\end{aligned}
\end{align}
and given $f \in H^s(\Omega)^*$, we seek $w \in H^s_{\int}(\Omega)$ fulfilling
\begin{align}\label{eq:w_n}
\begin{aligned}
 (-\Delta_N)^s w &= f +|\Omega|^{-1}\int_{\partial\Omega}g  \quad \mbox{in } \Omega     ,    \\
  \partial_\nu w &= 0  \quad \mbox{on } \partial\Omega.
\end{aligned}
\end{align}
Finally, we have $u = w+v$. 

We will show that \eqref{eq:v_n} is equivalent to solving the following standard Laplace problem with nonzero Neumann boundary conditions  
\begin{equation}\label{eq:v_ver_n}
 -\Delta v = |\Omega|^{-1}\int_{\Omega}f \quad \mbox{in } \Omega, \quad \partial_\nu v = g \quad \mbox{on } \partial\Omega, 
\end{equation}
in the very-weak form or in the classical weak form if the regularity of the boundary datum guarantees its well-posedness. 

We start with introducing the very weak form of \eqref{eq:v_ver_n}. Given $g\in N^{\frac{3}{2}}(\partial\Omega)^*$, we are seeking a function $v\in H^0_{\int}(\Omega)$ fulfilling 
\begin{equation}\label{eq:v_veryweak_n}
	\int_{\Omega}v(-\Delta)\varphi= \int_{\partial\Omega}g\varphi
	\quad
	\forall \varphi\in V,
\end{equation}
where $V = \{ \varphi \in H^1_{\int}(\Omega) \cap H^2(\Omega):\; \partial_\nu \varphi = 0 \mbox{ on } \partial\Omega \}$. 

\begin{lemma}
\label{l:v_exist_n}
    Let $\Omega$ be a bounded, quasi-convex domain.
	For any $f$ and $g\in N^{\frac{3}{2}}(\partial\Omega)^*$ fulfilling \eqref{eq:comcond}, there exists an unique very weak 
	solution $v\in H^0_{\int}(\Omega)$ of \eqref{eq:v_ver_n}. For more regular boundary 
	data $g\in \mathbb{N}^{s-\frac{3}{2}}(\partial\Omega)$ where $s\in [0,1]$, the solution 
	belongs to $H^s_{\int}(\Omega)$ and admits the a priori estimate
	\[
		|v|_{H^s(\Omega)}\leq c \|g\|_{\mathbb{N}^{s-\frac{3}{2}}(\partial\Omega)}.
	\]
	Moreover, if s=1 then the very weak solution is actually a weak solution.
\end{lemma}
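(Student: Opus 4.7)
The plan is to mirror the proof of Lemma~\ref{l:v_exist} in the Dirichlet setting, replacing the Dirichlet trace tools by the Neumann ones and keeping careful track of the zero-mean normalization. First I would apply the Babu\v{s}ka--Lax--Milgram theorem to the bilinear form $(v,\varphi)\mapsto \int_\Omega v(-\Delta\varphi)$ on $H^0_{\int}(\Omega)\times V$, with $V=\{\varphi\in H^1_{\int}(\Omega)\cap H^2(\Omega):\partial_\nu\varphi=0 \text{ on }\partial\Omega\}$. The central tool is the isomorphism, valid on quasi-convex domains by \cite[Theorem~10.8]{MR2788354}, between $V$ and $\{h\in L^2(\Omega):\int_\Omega h=0\}$ via $\varphi\mapsto -\Delta\varphi$, together with the a priori estimate $\|\varphi\|_{H^2(\Omega)}\le c\|\Delta\varphi\|_{L^2(\Omega)}$, so that $\|\Delta\varphi\|_{L^2(\Omega)}$ is an admissible norm on $V$.

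Next I would verify the two inf-sup conditions: given $\varphi\in V$ the choice $v=-\Delta\varphi/\|\Delta\varphi\|_{L^2(\Omega)}$ lies in $H^0_{\int}(\Omega)$ because $\int_\Omega(-\Delta\varphi)=-\int_{\partial\Omega}\partial_\nu\varphi=0$ and yields $|(v,-\Delta\varphi)_{L^2(\Omega)}|=\|\varphi\|_V$; conversely, given $v\in H^0_{\int}(\Omega)$, I solve the homogeneous Neumann problem $-\Delta\varphi=v/\|v\|_{L^2(\Omega)}$, $\partial_\nu\varphi=0$, $\int_\Omega\varphi=0$, which is well-posed precisely because $v$ has mean zero. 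The continuity of the right-hand side functional $\varphi\mapsto\int_{\partial\Omega}g\varphi$ on $V$ follows from Lemma~\ref{l:NspaceD} and the compatibility of the $N^{3/2}$-duality with the $L^2(\partial\Omega)$ integral (see \cite[Corollary~6.12]{MR2788354}), giving $|\int_{\partial\Omega}g\varphi|\le\|g\|_{N^{3/2}(\partial\Omega)^*}\|\gamma_D\varphi\|_{N^{3/2}(\partial\Omega)}\le c\|g\|_{N^{3/2}(\partial\Omega)^*}\|\varphi\|_V$. Babu\v{s}ka--Lax--Milgram then delivers existence, uniqueness and the estimate $\|v\|_{L^2(\Omega)}\le c\|g\|_{N^{3/2}(\partial\Omega)^*}$, which is the endpoint $s=0$.

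For the endpoint $s=1$, I would treat the classical weak formulation separately: for $g\in H^{-1/2}(\partial\Omega)$ with \eqref{eq:comcond}, the Neumann problem \eqref{eq:v_ver_n} admits a unique weak solution $v\in H^1_{\int}(\Omega)$ with $\|v\|_{H^1(\Omega)}\le c\|g\|_{H^{-1/2}(\partial\Omega)}$. Applying Green's identity valid on $V\subset H^2(\Omega)\cap H^1_{\int}(\Omega)$ shows that this $v$ also solves the very weak problem \eqref{eq:v_veryweak_n}, so the two solutions coincide by uniqueness. An intermediate estimate $\|v\|_{H^{1/2}(\Omega)}\le c\|g\|_{H^{-1}(\partial\Omega)}$ for the case $s=1/2$ is obtained from the Neumann counterpart of \cite[Theorem~5.3]{MR2788354}. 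The intermediate regularity $v\in H^s_{\int}(\Omega)$ for $g\in\mathbb{N}^{s-3/2}(\partial\Omega)$ with $0<s<1$ then follows by real interpolation between the endpoints, using the characterization \eqref{eq:equivalence2} and duality to identify the interpolated boundary space with $\mathbb{N}^{s-3/2}(\partial\Omega)$.

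The main obstacle is the boundary-trace estimate under the weakest regularity assumption $g\in N^{3/2}(\partial\Omega)^*$: one must control $\|\gamma_D\varphi\|_{N^{3/2}(\partial\Omega)}$ by $\|\Delta\varphi\|_{L^2(\Omega)}$ \emph{uniformly} over $\varphi\in V$. This is exactly where the quasi-convexity of $\Omega$ (which supplies the $H^2$-regularity in \cite[Theorem~10.8]{MR2788354}) and the sharp trace characterization in Lemma~\ref{l:NspaceD} (stated precisely on functions with $\partial_\nu=0$) enter in an essential way, playing the role that Lemma~\ref{l:Nspace} played in the Dirichlet argument. Everything else—the two inf-sup bounds, the identification weak $=$ very weak at $s=1$, and the interpolation step—is then a routine adaptation of the Dirichlet proof.
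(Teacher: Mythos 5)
Your proposal is correct and follows essentially the same route as the paper's proof, which itself just adapts Lemma~\ref{l:v_exist} to the Neumann setting: Babu\v{s}ka--Lax--Milgram via the isomorphism of \cite[Theorem~10.8]{MR2788354} and the trace result of Lemma~\ref{l:NspaceD}, identification of weak and very weak solutions at $s=1$, the intermediate $H^{1/2}$ bound from \cite[Theorem~5.4]{MR2788354}, and real interpolation. Your write-up supplies the zero-mean bookkeeping and inf-sup details that the paper leaves implicit, but there is no substantive difference in approach.
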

\begin{remark}
	Notice that due to Remark~\ref{r:N32space}, when $\Omega$ is $C^{1,r}$ with $r > 1/2$ we have $N^{\frac{3}{2}}(\partial\Omega) = H^{\frac{3}{2}}(\partial\Omega)$ and thus $\mathbb{N}^{s-\frac{3}{2}}(\partial\Omega) = H^{s-\frac{3}{2}}(\partial\Omega)$. Moreover, as for the Dirichlet problem, by employing similar arguments, in combination with \cite{MR2788354}, the results of Lemma \ref{l:v_exist} can be extended to general Lipschitz domains at least for $s\in[\frac12,1]$.
\end{remark}
\begin{proof}
The proof is similar to the proof of Lemma \ref{l:v_exist}. We only elaborate on the main differences. The proof of existence and uniqueness of a solution $v$ in $L^2(\Omega)$ with $\int_\Omega v=0$ is again based on the Babu\v{s}ka-Lax-Milgram theorem using the isomorphism
	\[
	\Delta\varphi\in L^2(\Omega),\ \partial_\nu\varphi=0,\ \int_\Omega\varphi=0\quad 
	\Leftrightarrow \quad \varphi\in V,
	\]
see \cite[Theorem~10.8]{MR2788354}, and Lemma~\ref{l:NspaceD}. The higher regularity can be deduced by real interpolation from classical regularity results for the solution of the corresponding weak formulation, which is actually a very weak solution due to the integration-by-parts formula, and the regularity results in $H^{\frac12}(\Omega)$ from \cite[Theorem 5.4]{MR2788354}.
\end{proof}

The equivalence between \eqref{eq:v_n} and \eqref{eq:v_ver_n} now follows along the same lines as in Theorem~\ref{t:veqver}. We collect this result in the following theorem.
\begin{theorem}\label{t:veqver_n}
	Let the assumptions of Lemma~\ref{l:v_exist_n} hold. Then solving problem 
	\eqref{eq:v_n} is equivalent to solving 
	problem \eqref{eq:v_ver_n} in the very weak sense. As a consequence, the results of Lemma \ref{l:v_exist_n} are valid for the solution of the fractional problem \eqref{eq:v_n}.
\end{theorem}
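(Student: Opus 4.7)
The plan is to mirror the proof of Theorem \ref{t:veqver}. Since Lemma \ref{l:v_exist_n} already yields uniqueness of the very weak solution, I first need a Neumann analogue of Lemma \ref{l:v_uniq}: if $g \equiv 0$ and $\int_\Omega f = 0$, so that the right-hand side of \eqref{eq:v_n} vanishes, then testing $\int_\Omega (-\Delta_N)^s v \cdot v = 0$ and expanding via Definition \ref{d:nzbc_n} collapses to $\sum_{k\ge 2}\mu_k^s\bigl(\int_\Omega v\psi_k\bigr)^2 = 0$, forcing $v \equiv 0$ in $H^s_{\int}(\Omega)$. Combined with Lemma \ref{l:v_exist_n} it then suffices to show that any $v \in \mathbb{N}^s(\Omega)$ solving \eqref{eq:v_n} automatically satisfies \eqref{eq:v_veryweak_n}.

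To do so, I substitute $\partial_\nu v = g$ into the series representation of $(-\Delta_N)^s v$ from Definition \ref{d:nzbc_n} and test against each eigenfunction $\psi_k$. Using the orthonormality of $\{\psi_k\}$ in $L^2(\Omega)$ together with $\psi_1 = |\Omega|^{-1/2}$ and $\int_\Omega \psi_k = 0$ for $k\ge 2$, the $k\ge 2$ modes yield, after cancelling the factor $\mu_k^{s-1}>0$,
\[
\mu_k \int_\Omega v\psi_k \;=\; \int_{\partial\Omega} g\,\psi_k,
\]
while the $k=1$ mode reduces, via $\psi_1 = 1/\sqrt{|\Omega|}$, to $-\int_{\partial\Omega} g = \int_\Omega f$, which is precisely the compatibility condition \eqref{eq:comcond} and is therefore automatically satisfied. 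Since $-\Delta\psi_k = \mu_k\psi_k$ with $\partial_\nu\psi_k = 0$, the $k\ge 2$ identity is exactly \eqref{eq:v_veryweak_n} tested against $\varphi = \psi_k$.

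To upgrade from the test functions $\psi_k$ ($k\ge 2$) to arbitrary $\varphi \in V = \{\varphi \in H^1_{\int}(\Omega)\cap H^2(\Omega) : \partial_\nu \varphi = 0\}$, I invoke the isomorphism underlying the proof of Lemma \ref{l:v_exist_n}, namely that on a quasi-convex domain $V$ coincides with $\operatorname{dom}(-\Delta_{N,0})$ equipped with the norm $\varphi \mapsto \|\Delta \varphi\|_{L^2(\Omega)}$ (cf.\ \cite[Theorem~10.8]{MR2788354}), and that $\{\psi_k\}_{k\ge 2}$ forms an orthogonal basis of this space in the graph norm. A density argument, together with the continuity of both sides of \eqref{eq:v_veryweak_n} in $\varphi$ with respect to the $V$-norm, then extends the identity to all $\varphi\in V$.

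The step I expect to be most delicate is the bookkeeping for the $k=1$ mode: because $\psi_1$ is constant, its contribution to $(-\Delta_N)^s v$ originates not from the spectral sum in Definition \ref{d:nzbc_n} but from the isolated boundary term $-|\Omega|^{-1}\int_{\partial\Omega}\partial_\nu v$, and reconciling this with the constant right-hand side $|\Omega|^{-1}\int_\Omega f$ is exactly where the compatibility hypothesis \eqref{eq:comcond} becomes indispensable. The density of $\{\psi_k\}_{k\ge 2}$ in $V$ under the required norm is otherwise routine but should be verified explicitly to close the argument.
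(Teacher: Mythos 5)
Your argument is correct and is exactly the route the paper intends: the paper gives no separate proof of Theorem~\ref{t:veqver_n}, stating only that it "follows along the same lines as in Theorem~\ref{t:veqver}", and your proposal fills in precisely those lines — the Neumann analogue of Lemma~\ref{l:v_uniq}, testing against the eigenfunctions $\psi_k$ for $k\ge 2$, and closing by density in $V=\operatorname{dom}(-\Delta_{N,0})$. Your explicit bookkeeping of the $k=1$ mode, where the compatibility condition \eqref{eq:comcond} reconciles the constant boundary term $-|\Omega|^{-1}\int_{\partial\Omega}\partial_\nu v$ with the constant right-hand side $|\Omega|^{-1}\int_\Omega f$, is a detail the paper leaves implicit and is handled correctly.
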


Finally, we conclude this section with the well-posedness of \eqref{eq:fracLap_n}.

\begin{theorem}[Existence and uniqueness]\label{th:uN}
    Let $\Omega$ be a bounded, quasi-convex domain. 
    If $f \in H^{s}(\Omega)^*$, $g \in \mathbb{N}^{s-\frac{3}{2}}(\partial\Omega)$ fulfill the compatibility condition \eqref{eq:comcond} then the system \eqref{eq:fracLap_n} has a unique solution $u \in H^s_{\int}(\Omega)$. In addition
\begin{equation}\label{eq:contdepdata_n}
 |u|_{H^s(\Omega)} \le C \left( \|f\|_{H^{s}(\Omega)^*} + \|g\|_{\mathbb{N}^{s-\frac{3}{2}}(\partial\Omega)} \right) .
\end{equation}
\end{theorem}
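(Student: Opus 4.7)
The strategy mirrors that of Theorem~\ref{t:exist}: I would use the lifting decomposition $u=w+v$ where $v$ solves the pure boundary problem \eqref{eq:v_n} and $w$ solves the homogeneous Neumann problem \eqref{eq:w_n}, then argue existence, uniqueness, and continuous dependence piece by piece. First, I would verify that if $v$ and $w$ exist as claimed, then $u=v+w$ indeed solves \eqref{eq:fracLap_n}: linearity of $(-\Delta_N)^s$ and the compatibility condition \eqref{eq:comcond} give
\[
(-\Delta_N)^s u=\bigl(f+|\Omega|^{-1}\textstyle\int_{\partial\Omega}g\bigr)+|\Omega|^{-1}\langle f,1\rangle\,\chi_\Omega=f,
\]
while $\partial_\nu u=\partial_\nu w+\partial_\nu v=0+g=g$ on $\partial\Omega$, so the correction terms introduced in the splitting cancel exactly by design. (Here $\langle f,1\rangle$ should be interpreted via the duality pairing since $f\in H^s(\Omega)^*$, and \eqref{eq:comcond} is read as $\langle f,1\rangle+\int_{\partial\Omega}g=0$.)

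For the boundary-lift $v$, I would directly invoke Theorem~\ref{t:veqver_n}, which equates \eqref{eq:v_n} with the very weak Neumann problem \eqref{eq:v_ver_n}, together with Lemma~\ref{l:v_exist_n}. This delivers a unique $v\in H^s_{\int}(\Omega)$ with the seminorm bound
\[
|v|_{H^s(\Omega)}\le c\,\|g\|_{\mathbb{N}^{s-\frac{3}{2}}(\partial\Omega)}.
\]
No further work is required here because the necessary compatibility (zero mean of the right-hand side of \eqref{eq:v_ver_n}) is automatic after the splitting.

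For $w$, since $\partial_\nu w=0$ the operator $(-\Delta_N)^s$ collapses to $(-\Delta_{N,0})^s$ by the proposition preceding Section~\ref{s:Ntrthm}, so \eqref{eq:w_n} reduces to a homogeneous-Neumann spectral fractional equation with right-hand side $F:=f+|\Omega|^{-1}\int_{\partial\Omega}g$. Testing $F$ against the constant function $1$ gives $\langle f,1\rangle+\int_{\partial\Omega}g=0$ by \eqref{eq:comcond}, so $F$ descends to a bounded functional on $H^s_{\int}(\Omega)$ with
\[
\|F\|_{H^s_{\int}(\Omega)^*}\le c\bigl(\|f\|_{H^s(\Omega)^*}+\|g\|_{\mathbb{N}^{s-\frac{3}{2}}(\partial\Omega)}\bigr),
\]
where the boundary term is controlled using $\int_{\partial\Omega}g=-\langle f,1\rangle$ together with continuity of the functional evaluation. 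Standard well-posedness of the zero-Neumann spectral fractional problem (\cite[Theorem~2.5]{MR3489634}) then yields a unique $w\in H^s_{\int}(\Omega)$ with $|w|_{H^s(\Omega)}\le c\,\|F\|_{H^s_{\int}(\Omega)^*}$.

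Combining, $u=w+v\in H^s_{\int}(\Omega)$, and the triangle inequality on the $H^s$-seminorm together with the two estimates above produces \eqref{eq:contdepdata_n}. Uniqueness follows by linearity: if $u\in H^s_{\int}(\Omega)$ solves \eqref{eq:fracLap_n} with $f\equiv 0$, $g\equiv 0$, then the same splitting forces $v=w=0$ by the respective uniqueness results, hence $u=0$. The main technical point requiring care, but no new ideas, is the bookkeeping of the mean-value corrections so that each sub-problem has a right-hand side compatible with its own solvability condition; once the splitting is set up as in \eqref{eq:v_n}--\eqref{eq:w_n}, all remaining work is an assembly of previously established results.
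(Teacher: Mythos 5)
Your proposal is correct and follows exactly the route the paper intends: the paper omits this proof, stating only that it is analogous to Theorem~\ref{t:exist}, and your argument is precisely that analogue — the splitting $u=v+w$ via \eqref{eq:v_n}--\eqref{eq:w_n}, Theorem~\ref{t:veqver_n} with Lemma~\ref{l:v_exist_n} for $v$, well-posedness of the zero-Neumann spectral problem for $w$, and the triangle inequality. Your bookkeeping of the mean-value corrections and the compatibility condition \eqref{eq:comcond} checks out.
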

\begin{proof}
The proof is similar to that of Theorem~\ref{t:exist} and is omitted for brevity. 
\end{proof}

\section{Application IV: Boundary Control Problems}\label{s:nbc}
\subsection{Dirichlet boundary control problem}
Given $u_d \in L^2(\Omega)$ and $\alpha > 0$, we consider the following problem: minimize 
\begin{equation}\label{eq:obj}
J(u,z) := \frac{1}{2}\left( \| u - u_d \|_{L^2(\Omega)}^2 + \alpha \| q \|_{L^2(\partial\Omega)}^2 \right)
\end{equation}
subject to the state equation
\begin{align}\label{eq:state}
\begin{aligned}
(-\Delta_D)^s u &= 0 \quad \mbox{in } \Omega,  \\
u &= q \quad \mbox{on } \partial\Omega , 
\end{aligned}
\end{align}
and for given $a, b \in L^2(\partial\Omega)$ with $a(x) < b(x)$ for a.a. $x \in \partial\Omega$, the control $q$ belongs to the admissible set $Q_{ad}$ defined as 
\begin{equation}\label{eq:Qad}
Q_{ad} := \{ q \in L^2(\partial\Omega) : a(x) \le q(x) \le b(x) \ \mbox{for a.a. } x \in \partial\Omega \} . 
\end{equation} 
Notice that $L^2(\partial\Omega)\subset N^{\frac12}(\partial\Omega)^*$. Consequently,
owing to Theorem~\ref{t:veqver} we notice that 
the state equation \eqref{eq:state} is equivalent to 
\begin{align}\label{eq:state_vw}
\begin{aligned}
-\Delta u &= 0 \quad \mbox{in } \Omega,  \\
u &= q \quad \mbox{on } \Omega ,
\end{aligned}
\end{align}
where the latter is understood in the very-weak sense. 

Without going into further details we refer to \cite{CasasRaymond2006,MR3070527,AMPR2016} where the (numerical) analysis for this problem is carried out. The advantage of our characterization of fractional Laplacian is clear, i.e., it allows to equivalently rewrite the fractional optimal control problem into an optimal control problem which has been well studied. 

\subsection{Neumann boundary control problem}
Given $u_d \in L^2(\Omega)$ and $\alpha > 0$, we consider the following problem: minimize 
$J(u,q)$ as defined in \eqref{eq:obj} subject to the state equation
\begin{align}\label{eq:state_n}
\begin{aligned}
 (-\Delta_N)^s u &= 0 \quad \mbox{in } \Omega , \\
  \partial_\nu u &= q \quad \mbox{on } \partial\Omega . 
\end{aligned}
\end{align}
and the control $q \in Q_{ad}$ with $\int_{\partial\Omega}q=0$, where $Q_{ad}$ is defined in \eqref{eq:Qad}. Since $q \in L^2(\partial\Omega) \subset \mathbb{N}^{s-\frac{3}{2}}(\partial\Omega)$, the state equation \eqref{eq:state_n} is well-posed according to Theorem \ref{th:uN}. Moreover, it
is equivalent to 
\begin{align}\label{eq:state_vw_n}
\begin{aligned}
  -\Delta u &= 0 \quad \mbox{in } \Omega,  \\
  \partial_\nu u &=q \quad \mbox{on } \partial\Omega ,
\end{aligned}
\end{align}
where the latter can be understood in the classical weak sense. 

The optimization problem with constraints 
\begin{align}\label{eq:state_vw_n_2}
 \begin{aligned}
  -\Delta u + c u &= 0 \quad \mbox{in } \Omega,  \\
  \partial_\nu u &= q \quad \mbox{on } \partial\Omega ,
\end{aligned}
\end{align}
where $c >0 $, has been well studied, see \cite{MR2150243,MR2396868,MR2795722,HinzeMatthes:2008,MR3266952}.

\section{Numerics}\label{s:numerics}

Let $n = 2$. We verify the results of Theorem~\ref{t:fem_est} by two numerical examples. In the first example, we let the exact solution $w$ and $v$ to \eqref{eq:w} and \eqref{eq:v} to be smooth. In the second example we will take $v$ to be a nonsmooth function. All the computations were carried out in MATLAB under the $i$FEM library \cite{LChen_2009a}.

\subsection{Example 1: Smooth Data}\label{s:ex1}

Let $\Omega = (0,1)^2$. Under this setting, the eigenvalues and eigenfunctions of $-\Delta_{D,0}$ are:
\[
 \lambda_{k,l} = \pi^2 (k^2 + l^2), \quad 
 \varphi_{k,l} = \sin(k\pi x_1) \sin(l\pi x_2)  . 
\]
Setting $f = \sin(2\pi x_1) \sin(2\pi x_2)$ then the exact solution of \eqref{eq:w} is 
\[
 w = \lambda_{2,2}^{-s} \sin(2\pi x_1) \sin(2\pi x_2).
\] 
We let $v = x_1 + x_2$ and $g = x_1 + x_2$. Recall that $u = v+w$. As $g$ is smooth, the approximation error will be dominated by the error in $w$. 

Recall that $\|u-u_{\mathscr{T}_\Omega}\|_{H^s(\Omega)} \le \|v-v_{\mathscr{T}_\Omega}\|_{H^s(\Omega)} + \|w-\mathcal W_{\mathscr{T}_\mathpzc{Y}}\|_{H^s(\Omega)}$, where $u$, $v$, and $w$ are the exact solutions and $u_{\mathscr{T}_\Omega}$, $v_{\mathscr{T}_\Omega}$, and $\mathcal W_{\mathscr{T}_\mathpzc{Y}}$ are the approximated solutions. Recall from Proposition~\ref{l:HsBHs} that $\|w-\mathcal W_{\mathscr{T}_\mathpzc{Y}}\|_{H^s(\Omega)} \le C \|w-\mathcal W_{\mathscr{T}_\mathpzc{Y}}\|_{\mathbb{H}^s(\Omega)}$. Then using the extension, in conjunction with Galerkin-orthogonality, it is straightforward to approximate the $\mathbb{H}^s(\Omega)$-norm
\[
 \|w-\mathcal W_{\mathscr{T}_\mathpzc{Y}}\|_{\mathbb{H}^s(\Omega)}^2 \le C \|\nabla(\mathcal W-\mathcal W_{\mathscr{T}_\mathpzc{Y}})\|^2_{L^2(y^\alpha,\mathcal{C})} = d_s \int_\Omega f (w-W_{\mathscr{T}_\Omega}) \;dx .
\]
However, it is more delicate to compute $\|v-v_{\mathscr{T}_\Omega}\|_{H^s(\Omega)}$, for instance see \cite{MR2318290, MR2551152}. To accomplish this we first solve the generalized eigenvalue problem $\textbf{A}\textbf{x} = \lambda \textbf{M} \textbf{x}$, where $\textbf{A}$ and $\textbf{M}$ denotes the stiffness and mass matrices on $\Omega$. If $\textbf{v}$ and $\textbf{v}_{\mathscr{T}_\Omega}$ denotes the nodal values of the exact $v$ and approximated $v_{\mathscr{T}_\Omega}$ then we take 
\[\left(\|v-v_{\mathscr{T}_\Omega}\|^2_{L^2(\Omega)}+(\textbf{v}-\textbf{v}_{\mathscr{T}_\Omega})^T(\textbf{M}\textbf{V})^T\textbf{D}^s (\textbf{M}\textbf{V}) (\textbf{v}-\textbf{v}_{\mathscr{T}_\Omega}) \right)^{\frac12}
\]
as an approximation of $\|v-v_{\mathscr{T}_\Omega}\|_{H^s(\Omega)}$, where $\textbf{D}$ is the diagonal matrix with eigenvalues and the columns of matrix $\textbf{V}$ contains the eigenvectors of the aforementioned generalized eigenvalue problem.

Figure~\ref{f:ex1} (left) illustrates the $H^s$-norm, computed as described above. Figure~\ref{f:ex1} (right) shows the $L^2$-norm  of the error between the $u$ and $u_{\mathscr{T}_\Omega}$. 
As expected we observe $(\# \mathscr{T}_\mathpzc{Y})^{-\frac{1}{3}}$ rate in the former case. In the latter case, we observe a rate $(\# \mathscr{T}_\mathpzc{Y})^{-\frac{2}{3}}$ which is higher than the stated rate in Theorem~\ref{t:fem_est}. However, this is not a surprise as we already observed this in \cite{HAntil_JPfefferer_MWarma_2016a}, recall that our result for $L^2$-norm rely on \cite[Proposition~4.7]{RHNochetto_EOtarola_AJSalgado_2014b}.

\begin{figure}[h!]
\centering
\includegraphics[width=0.49\textwidth]{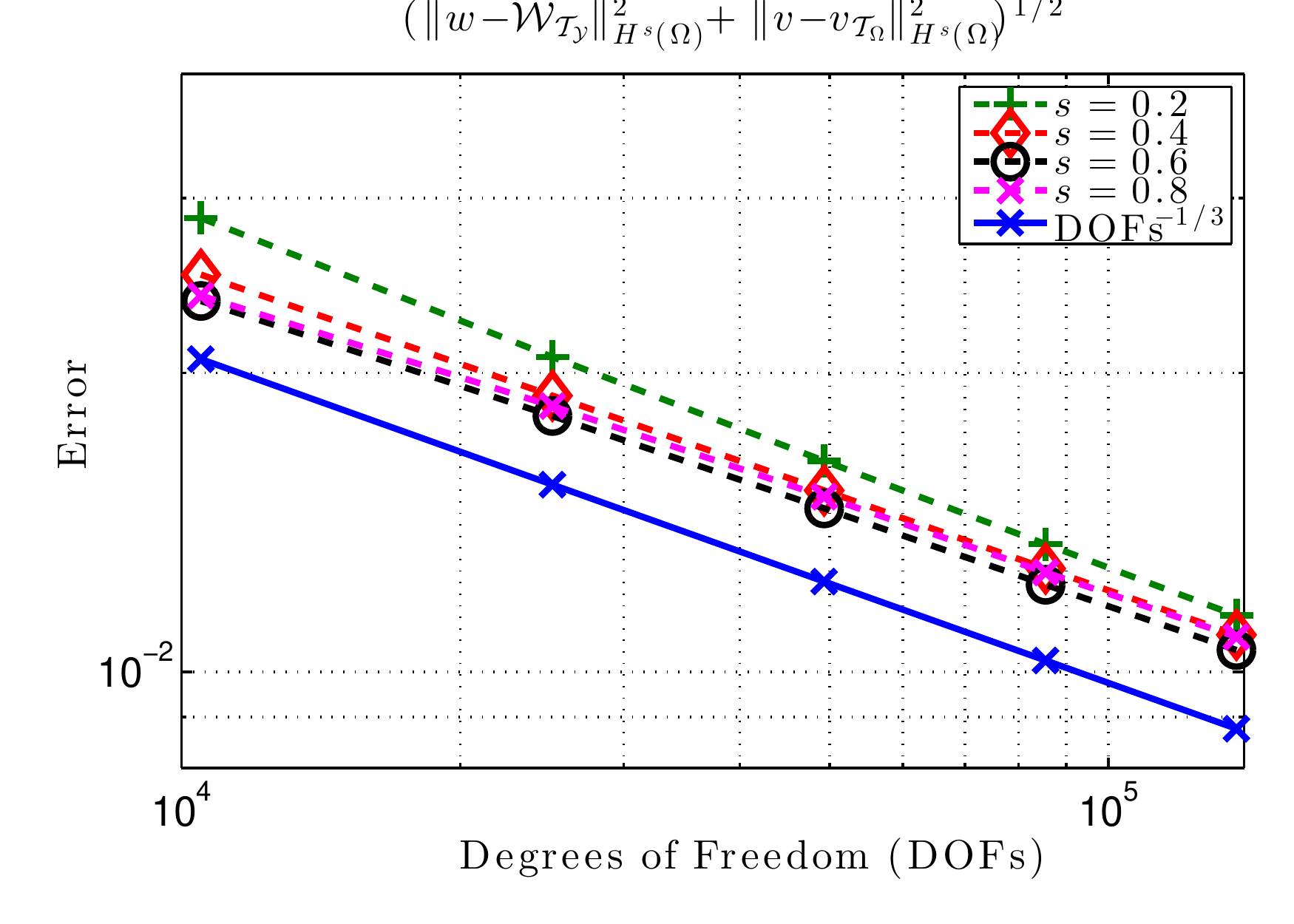}
\includegraphics[width=0.49\textwidth]{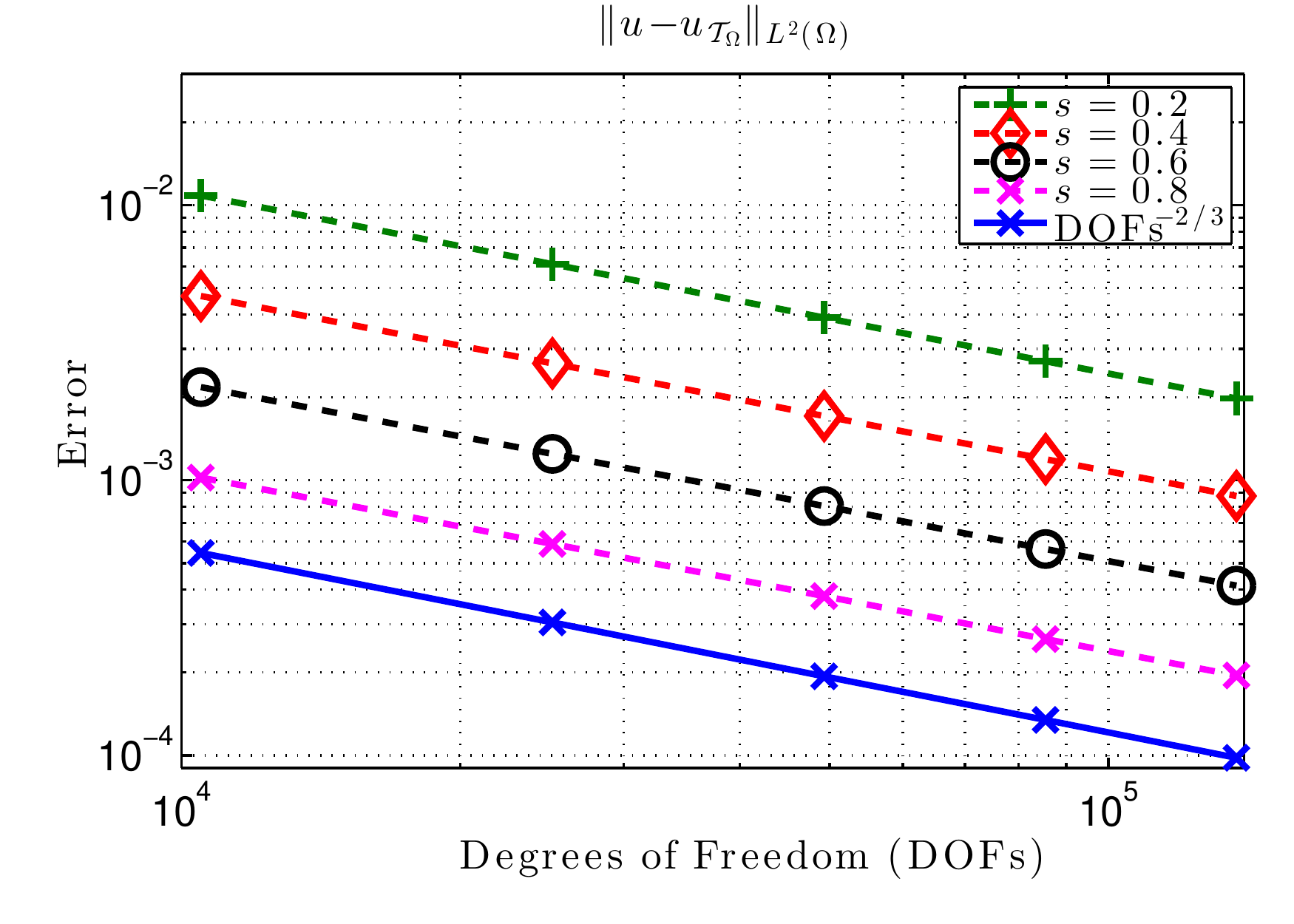}
\caption{\label{f:ex1}Rate of convergence on anisotropic meshes for $n=2$ and $s = 0.2,0.4,0.6,$ and $s=0.8$ is shown. The blue line is the reference line. The panel on the left shows $H^s$-error, in all cases we recover $(\# \mathscr{T}_\mathpzc{Y})^{-\frac{1}{3}}$. The right panel shows the $L^2$-error which decays as $(\# \mathscr{T}_\mathpzc{Y})^{-\frac{2}{3}}$.}
\end{figure}

\subsection{Example 2: Nonsmooth Data}

We let $\Omega = (0,1)^2$. Moreover, let $w$ and $f$ be the same as in Section~\ref{s:ex1} and we choose the boundary datum 
\[
g = r^{0.4999} \sin(0.4999 \ \theta) .
\]
This function belongs to $H^{1-\epsilon}(\partial\Omega)$ for every $\epsilon>0.0001$. The exact $v$ is simply
\[
v = r^{0.4999} \sin(0.4999 \ \theta) .
\]
Then $u = w+v$. In view of the regularity of $g$, we expect the approximation error of $u$ to be dominated by the approximation error in $v$ if $s>0.4999$. On the other hand, if $s<0.4999$, the approximation error of $w$ will dominate. More precisely, we expect in the former case a rate of about 
$(\# \mathscr{T}_\mathpzc{Y})^{-\frac{1}{3}\big(\frac{3}{2}-s\big)}$
in the $H^s(\Omega)$-norm. In the latter case, we expect a convergence rate of $(\# \mathscr{T}_\mathpzc{Y})^{-\frac{1}{3}}$ in the $H^s(\Omega)$-norm as in the foregoing example. 
Figure~\ref{f:ex2} confirms this.

\begin{figure}[h!]
\centering
\includegraphics[width=0.49\textwidth]{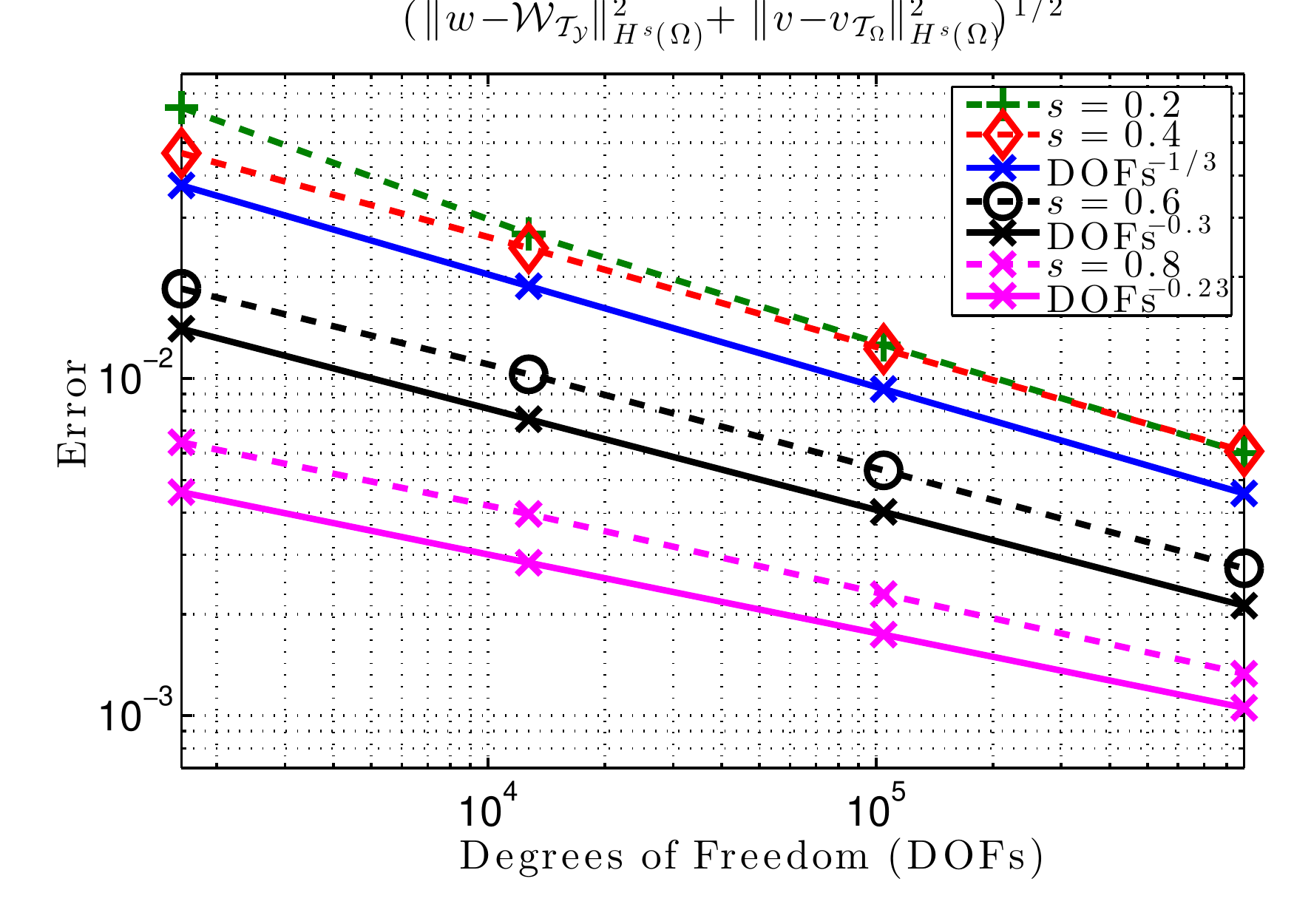}
\caption{\label{f:ex2} Rate of convergence on anisotropic meshes for $n=2$ and $s = 0.2,0.4,0.6,$ and $s=0.8$ is shown (dotted line). Starting from the top, the first solid line is the reference line with rate $(\# \mathscr{T}_\mathpzc{Y})^{-\frac{1}{3}}$. The second and third solid lines shows the rate $(\# \mathscr{T}_\mathpzc{Y})^{-\frac{1}{3}\big(\frac{3}{2}-s\big)}$ for $s = 0.6$ and $s = 0.8$, respectively.
}
\end{figure}

\section{Further Extensions: General Second Order Elliptic Operators}\label{s:fext}

We notice that our Definitions \ref{d:nzbc} and \ref{d:nzbc_n} immediately extend to general second order fractional operators. More precisely, let the general second order elliptic operator $\mathcal{L}$ be given as
\begin{equation}\label{eq:calL}
 \mathcal{L} u = -\mbox{div}(A \nabla u) \quad \mbox{in } \Omega.
\end{equation} 
Here, the coefficients $a_{ij}$ are measurable, belong to $L^\infty(\Omega)$, are symmetric, that is,
\begin{align*}
a_{ij}(x)=a_{ji}(x)\;\forall \; i,j=1,\ldots,n \mbox{ and for  a.e. }  x \in\Omega,
\end{align*}
and satisfy the ellipticity condition, that is, there exists a constant $\gamma>0$ such that
\begin{align*}
\sum_{i,j=1}^n a_{ij}(x)\xi_i\xi_j\ge \gamma|\xi|^2,\;\;\forall\;\xi\in \mathbb{R}^n.
\end{align*}
Moreover, we use $\partial_\nu^\mathcal{L}u$ to denote the conormal derivative of $u$, i.e.,
\begin{align}\label{co-nromal}
\partial_\nu^{\mathcal{L}}u=\sum_{j=1}^n\Big(\sum_{i=1}^n a_{ij}(x)D_iu\Big)\nu_j.
\end{align}
The fractional operators corresponding to $\mathcal{L}$ are defined as follows.

\begin{definition}[nonzero Dirichlet]
For $s\in(0,1)$, we define the spectral fractional Dirichlet Laplacian on $C^\infty(\bar\Omega)$ by 
\begin{equation}\label{eq:fLap_g_A}
\mathcal{L}_D^s u := \sum_{k=1}^\infty \left(\lambda_k^s \int_\Omega u \varphi_k 
     + \lambda_k^{s-1} \int_{\partial\Omega} u \partial_\nu^\mathcal{L} \varphi_k \right) \varphi_k ,
\end{equation}
where $(\lambda_k,\varphi_k)$ are the eigenvalue-eigenvector pairs of $\mathcal{L}$ with $\varphi_k|_{\partial\Omega} = 0$. 
\end{definition}

As we showed in Section~\ref{s:nzbc}, the operator 
$\mathcal{L}_D^s$ can be extended to an operator mapping from
\[
\mathbb{D}^s(\Omega):=\{u\in L^2(\Omega):\; 
\sum_{k=1}^\infty 
\lambda_k^{s}\left(u_{\Omega,k} + \lambda_k^{-1} u_{\partial\Omega,k} 
\right)^2<\infty\}
\]
to $\mathbb{H}^{-s}(\Omega)$, where 
$
	u_{\Omega,k}=\int_\Omega u \varphi_k$ and 	
$	u_{\partial\Omega,k}=\int_{\partial\Omega} u \partial_\nu^\mathcal{L} \varphi_k.
$

\begin{definition}[nonzero Neumann]
For $s\in(0,1)$, we define the spectral fractional Neumann Laplacian on $C^\infty(\bar{\Omega})$ 
by 
\begin{equation}\label{eq:fLap_g_An}
\mathcal{L}_N^s u := \sum_{k=2}^\infty \left(\mu_k^s \int_\Omega u \psi_k 
     - \mu_k^{s-1} \int_{\partial\Omega} \partial_\nu^\mathcal{L} u  \psi_k \right) \psi_k 
     -|\Omega|^{-1}\int_{\partial\Omega} \partial_\nu^\mathcal{L} u, 
\end{equation}
where $(\mu_k,\psi_k)$ are the eigenvalue-eigenvector pairs of $\mathcal{L}$ with $\partial_\nu^\mathcal{L} \psi_k = 0$.
\end{definition}

Again, as in Section~\ref{s:nn}, we set $u_{\Omega,k}=\int_\Omega u \psi_k$ and $u_{\partial\Omega,k}=\int_{\partial\Omega} \partial_\nu^\mathcal{L} u \psi_k$. Then, if we assume $\int_{\partial\Omega} \partial_\nu^\mathcal{L} u=0$, the operator 
$\mathcal{L}_N^s$ is extendable to an operator mapping from
\[
\mathbb{N}^s(\Omega):=\{u=\sum_{j=2}^\infty u_j\psi_j\in L^2(\Omega):\; 
\sum_{k=2}^\infty 
\mu_k^{s}\left(u_{\Omega,k} - \mu_k^{-1} u_{\partial\Omega,k} 
\right)^2<\infty\}
\]
to ${H}_{\int}^{-s}(\Omega)$. 

\begin{remark}
	For $c \in L^\infty(\Omega)$ and $c(x) > 0$ for a.a. $x\in\Omega$, we can further generalize $\mathcal{L}$ in \eqref{eq:calL} to $\mathcal{L} u = -\mbox{div}(A \nabla u) + c u$. The definitions above of fractional operators remain intact with the obvious modification in the Neumann case. 
\end{remark}

\section*{Acknowledgement}
We thank Boris Vexler, Pablo Stinga, and Mahamadi Warma for several fruitful discussions.

\bibliographystyle{plain}
\bibliography{references}

\end{document}